\newtheorem{thm}{Theorem}[section]
\newtheorem{prop}[thm]{Proposition}
\newtheorem{lem}[thm]{Lemma}
\newtheorem{cor}[thm]{Corollary}
\theoremstyle{definition}
\newtheorem{definition}[thm]{Definition}
\newtheorem{example}[thm]{Example}
\newtheorem{remark}[thm]{Remark}
\newtheorem{notation}[thm]{Notation}
\newtheorem{convention}[thm]{Convention}
\newtheorem{construction}[thm]{Construction}
\numberwithin{equation}{section}
\renewcommand{\d}{\mathfrak{d}} 
\DeclareMathOperator{\GL}{GL}
\DeclareMathOperator{\ord}{ord}
\DeclareMathOperator{\Spec}{Spec}
\DeclareMathOperator{\Proj}{Proj}
\DeclareMathOperator{\Star}{Star}
\DeclareMathOperator{\Cone}{Cone}
\DeclareMathOperator{\lcm}{lcm}
\newcommand{\R}{\mathbf{R}}       
\newcommand{\Z}{\mathbf{Z}}
\newcommand{\Q}{\mathbf{Q}}
\newcommand{\C}{\mathbf{C}} 
\newcommand{\N}{\mathbf{N}}
\newcommand{\cX}{\mathcal{X}} 
\renewcommand{\P}{\mathbf{P}} 
\DeclareMathOperator{\ev}{ev}
\DeclareMathOperator{\Spf}{Spf}
\DeclareMathOperator{\Int}{Int}
\begin{document}

\title[Smoothing Toric Fano Surfaces]{Smoothing Toric Fano Surfaces Using the Gross--Siebert Algorithm}

\author{Thomas Prince}
\address{Department of Mathematics, Imperial College London, 180 Queen's Gate, London SW7 2AZ}
\email{t.prince12@imperial.ac.uk}

\begin{abstract}
A toric del~Pezzo surface $X_P$ with cyclic quotient singularities determines and is determined by a Fano polygon $P$.  We construct an affine manifold with singularities that partially smooths the boundary of $P$; this a tropical version of a $\Q$-Gorenstein partial smoothing of $X_P$.  We implement a mild generalization of the Gross--Siebert reconstruction algorithm -- allowing singularities that are not locally rigid -- and thereby construct (a formal version of) this partial smoothing directly from the affine manifold.  This has implications for mirror symmetry: roughly speaking, it implements half of the expected mirror correspondence between del~Pezzo surfaces with cyclic quotient singularities and Laurent polynomials in two variables.
\end{abstract}

\maketitle
\section{Introduction}
\label{sec:introduction}

There has been much recent interest in the classification of log del~Pezzo surfaces up to deformation -- in particular log del~Pezzo surfaces have been classified in index at most two by Alexeev--Nikulin~\cite{AN06} and in index three by Fujita--Yasutake~\cite{FY14}. Here we analyse $\Q$-Gorenstein deformations of del~Pezzo surfaces with cyclic quotient singularities, exploring a rich combinatorial structure predicted to exist by Mirror Symmetry. The results we obtain are significantly less detailed than the known classification theorems, but they apply in greater generality:  to all log del~Pezzo surfaces with cyclic quotient singularities.

The current work is inspired by a program, laid out in \cite{Fano} by Coates--Corti--Galkin--Golyshev--Kasprzyk, which conjectures the existence of a combinatorial structure on the set of toric varieties to which a given Fano variety degenerates. As mentioned, this conjecture is a manifestation of \emph{Mirror Symmetry}, which conjectures a general correspondence between Fano varieties and certain \emph{Landau--Ginzburg models}. These are still vague conjectures even for Fano manifolds, but for dimension two, specifically for orbifold del~Pezzo surfaces, my coauthors and I made the conjecture precise in~\cite{Overarching}.

Indeed, Theorem~3 in \cite{Overarching} states, roughly speaking, that the collection of toric varieties to which a surface $X$ admits a $\Q$-Gorenstein degeneration is precisely a collection of so-called \emph{mutation classes} of Fano polygons; a central conjecture in that paper is that there is in fact only one mutation class for each surface $X$. Theorem~3 of \cite{Overarching} was inspired and proven via two observations: First, that $X$ is mirror-dual\footnote{In the sense of \cite{Fano}, i.e.~the coincidence of local systems associated to the Picard--Fuchs equations for the Laurent polynomials and the quantum differential equations for $X$.} to a collection of Laurent polynomials, and that (some of) these Laurent polynomials are related by special birational transformations called mutations.  Passing to Newton polygons $P$ and $P'$,  mutation defines a purely combinatorial operation taking $P$ to $P'$. 
Second, by a theorem of Ilten \cite{Ilten}, for any mutation there is a $\Q$-Gorenstein family over $\P^1$ for which $X_P$ and $X_{P'}$ are the fibers over $0$,~$\infty$ respectively. (Note however that $X$ need not be the general fiber of this family.) The result of \cite{Overarching} follows as we see that if $X$ admits a $\Q$-Gorenstein degeneration to $X_P$ it must admit one to $X_{P'}$.

While this reveals an interesting structure we so far lack a geometric understanding of why this structure should appear.  Similarly, while we have a good understanding of the 1-strata (Ilten pencils) of the `$\Q$-Gorenstein parameter space' we lack a combinatorial description of the higher dimensional strata. The current work tackles these two issues by introducing an intermediate object, an \emph{affine manifold with singularities}, which incorporates the combinatorial (mutation) structure of the Fano polygons. Utilizing the techniques and algorithm introduced by Gross--Siebert in \cite{GS1,GSlog} we view this object as associated to a deep (maximal) degeneration of the surface $X$. In fact via the Gross--Siebert reconstruction algorithm we can not only pass from an affine manifold to a (formal version of) $X$ but also construct (a formal version of) the total space of the $\Q$-Gorenstein smoothing of $X_P$, by allowing the functions defining certain log-structures to vary and degenerate. Thus we both:
\begin{itemize}
\item  canonically construct affine manifolds from mutation classes of Fano polygons; and 
\item canonically recover precisely the $\Q$-Gorenstein smoothing families from these affine manifolds.
\end{itemize}

At the end we return to interpret the Ilten pencils, which were our original inspiration, in terms of these affine manifolds. We find that, again going via a more degenerate limit, the Ilten pencil can be reconstructed by applying the Gross--Siebert algorithm to a simple affine manifold (or family of affine manifolds) formed by moving a single singularity in the affine structure.

\begin{remark}
The affine manifolds we consider also play an crucial, if somewhat implicit, role in the recent works of Gross--Hacking--Keel \cite{GHK,GHK2}. In particular, in cases where the anti-canonical system of $X$ contains no smooth elliptic curves, we are able to apply the main result of \cite{GHK}. Indeed, in this case we may associate to an affine manifold with singularities~$B$ a log Calabi--Yau pair $(Y,D)$, following the ideas of \cite{GHK}. The main result of \cite{GHK} produces an algebra of theta functions defining the universal family of the log Calabi--Yau 
\[
\mathring{X} := X \setminus \text{(anticanonical divisor)};
\]
$\mathring{X}$ appears in the current work by the application of the Gross--Siebert algorithm to the interior of $B$. In fact we can say more.  The construction we use provides not only a log Calabi--Yau pair $(Y,D)$ but a toric model: $(Y,D) \rightarrow (\bar{Y},\bar{D})$ which defines a torus chart on $Y \backslash D$. Thus we have constructed a bijection between a collection of $\Q$-Gorenstein toric degenerations of an orbifold del Pezzo surface $X$ (which are related by sliding the singularities or \emph{moving worms}) and a set of torus charts on the mirror-dual variety $Y \backslash D$.  The existence of such a bijection is a guiding heuristic of the programme set out in \cite{Fano} and \cite{Overarching}.
\end{remark}


\subsection*{Overview}

Recall that in toric geometry a polygon $P^\vee$ is the base of a \emph{special Lagrangian torus fibration} given by the moment map for $X_P$.  More generally given any special Lagrangian torus fibration it is well known that the base manifold $B$ carries a canonical \emph{affine structure} \cite{SYZ}.  The \emph{SYZ conjecture} states that mirror symmetry takes the points of a Calabi--Yau manifold to a family of special Lagrangian tori in the mirror variety.  Thus we expect that $X$ should carry a special Lagrangian torus fibration with base\footnote{Here, since $X$ is Fano rather than Calabi--Yau, the affine manifold~$B$ will have non-empty boundary and its Legendre dual $\breve{B}$ will be non-compact.}~$B$, and that the mirror $X^\vee$ should carry a dual special Lagrangian torus fibration over the same base $B$.  The superpotential $W$ in this formulation is a count of holomorphic discs in $X$ of Maslov index~2 \cite{Aur1,Aur2}.  Indeed, 
one should recover the Laurent polynomial superpotentials explicitly by counting the tropical analogues of these discs, which are so-called \emph{broken lines} in the affine manifold $B$~\cite{GPS,TropicalLG,TropGeom}.  We do not construct the special Lagrangian torus fibration on $X^\vee$ in this paper, nor do we analyse counts of broken lines.  (We will do this in future work.)  We concentrate instead on structures on~$X$.

Starting from the polygon $P$, thought of as an affine manifold, we construct `tropical deformations' of $P$ by exchanging corners for \emph{singularities} in the affine structure.  In this way we form a parametrized family $B_t$ of affine manifolds in which these singularities move -- this is referred to as \emph{moving worms} in \cite{KoSo}. Our task then is to build a variety carrying a special Lagrangian torus fibration, which has the affine manifold $B_t$ as its base.  However this is a familiar problem: recent approaches to proving asymptotic versions of the SYZ conjecture by Gross--Siebert~\cite{GS1, TropGeom} and Kontsevich--Soibelman~\cite{KoSo} involve forming the base manifold by \emph{toric degeneration}, taking a \emph{Legendre dual} affine structure equipped with extra data, and exhibiting an algorithm to reconstruct the mirror variety from the base manifold.  This reconstruction algorithm precisely allows us to pass from the base manifold $B_t$ to a formal or analytic neighbourhood of a central fiber of a toric degeneration, although one typically does not recover the existence of a genuine special Lagrangian fibration from these methods.

Using the Gross--Siebert algorithm to reconstruct the desired families requires first attending to a number of details. Firstly one must pass from the affine manifold $B_t$ to the central fiber $X_0(B_t,\mathscr{P},s)$ of a toric degeneration. This central fiber is independent of $t$. We must then define a notion of compatibility between the family $\{B_t\}$ of affine manifolds and a family of \emph{log-structures} on the central fiber. Since this setting is not \emph{locally rigid} in the sense of~\cite{GS1} we shall describe the algorithm in some detail and give the slight amendments required in this context (Sections~\ref{sec:log_structures}--\ref{sec:gluing}).  We then consider (in Section~\ref{sec:local_models}) explicit descriptions of the schemes produced by the Gross--Siebert algorithm near to boundary zero-strata of $B_t$ and show that these are compatible with the $\Q$-Gorenstein smoothings of these singularities.  We combine these results in the following theorem, where we show that the Gross--Siebert algorithm may be applied fiberwise, producing a family of formal families each of which is a thickening of $X_0(B_t,\mathscr{P},s)$.

\begin{thm}\label{thm:families}
Given a Fano polygon $Q$ let $\mathscr{P}$ be the polygonal decomposition via the spanning fan and let $s$ be trivial gluing data.  From this data we may form a flat family $\mathcal{X}_Q \rightarrow \Spec\C[\alpha]\llbracket t \rrbracket$ such that:
\begin{itemize}
\item Fixing a nonzero $\alpha$ the restriction of $\cX$ over $\Spec \C\llbracket t\rrbracket$ is the flat formal family produced by the Gross--Siebert algorithm.
\item Fixing $\alpha = 0$ the restriction of $\cX$ over $\Spec \C\llbracket t\rrbracket$ is precisely the Mumford degeneration of the pair $(Q,\mathscr{P})$.
\item Fixing $t=0$, the restriction of $\cX$ over $\Spec \C[\alpha]$ is $X_0(Q, \mathscr{P}, s) \times \Spec \C[\alpha]$.
\item For each boundary zero-stratum $p$ of $X_0(Q, \mathscr{P}, s)$ there is neighbourhood $U_p$ in $\cX$ isomorphic to a family $\mathcal{Y}\rightarrow \Spec\C[\alpha]$ obtained by first taking a one-parameter $\Q$-Gorenstein smoothing of the singularity of $X_Q$ at $p$, taking a simultaneous maximal degeneration of every fiber and restricting to a formal neighbourhood of the central fiber.
\end{itemize}
\end{thm}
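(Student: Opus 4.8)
The plan is to obtain $\mathcal{X}_Q$ as the output of the modified Gross--Siebert reconstruction algorithm, applied not to a single affine manifold but to the one-parameter family $\{B_\alpha\}$ of affine manifolds obtained from $Q$ by exchanging its corners for singularities placed at affine distance $\alpha$ from those corners: at $\alpha = 0$ the singularities sit at the corners, so $B_0$ is just $Q$ itself (no worms), while for $\alpha$ invertible the $B_\alpha$ are the genuine \emph{tropical partial smoothings} of the boundary of $Q$. This family of affine manifolds corresponds, via the set-up of Sections~\ref{sec:log_structures}--\ref{sec:gluing}, to a one-parameter family of log-structures on the fixed central fiber $X_0(Q,\mathscr{P},s)$, the member over $\alpha = 0$ being the degenerate one for which every wall-crossing automorphism is trivial. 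One then runs the algorithm over the base ring $R := \C[\alpha]\llbracket t\rrbracket$: inductively in $k$ one builds flat $R/t^{k+1}$-schemes $\mathcal{X}_k$ with compatible closed immersions $\mathcal{X}_{k-1}\hookrightarrow\mathcal{X}_k$, by gluing the standard affine charts $U_\sigma$, $\sigma\in\mathscr{P}$, and correcting order by order in $t$ via the scattering and normalization procedure; one sets $\mathcal{X}_Q := \varprojlim_k \mathcal{X}_k$. The point requiring care is that every ingredient --- local models near joints, wall functions, the obstruction computation controlling consistency --- is now carried out with coefficients in $\C[\alpha]$ rather than in a field, and one needs that the order-$k$ consistency equations, whose solvability in the non-locally-rigid setting is the content of those sections, remain solvable over $\C[\alpha]$ with solutions compatible as $k$ grows; since $\alpha$ enters the wall functions only multiplicatively and the relevant obstruction spaces are free $\C[\alpha]$-modules, this reduces to the case already treated.

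Granting the construction, the first three properties follow by specialization. For $\alpha_0 \neq 0$ the log-structure data specialises to the genuine data attached to $B_{\alpha_0}$, so $\mathcal{X}_Q\otimes_R\C\llbracket t\rrbracket$ is by construction the formal family output by the ordinary Gross--Siebert algorithm, the rescaling of the wall functions by $\alpha_0$ being absorbed into an automorphism of $\C\llbracket t\rrbracket$; hence the isomorphism type is independent of $\alpha_0$. For $\alpha = 0$ every wall is trivial, so at each order the charts $U_\sigma$ are glued by the trivial toric transition maps, and the resulting family is the Mumford degeneration of $(Q,\mathscr{P})$, i.e.\ the toric degeneration of $X_Q$ cut out by the strictly convex piecewise-linear function defining the spanning-fan subdivision $\mathscr{P}$. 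For $t = 0$ all $\alpha$-dependent corrections lie in strictly positive $t$-degree, so $\mathcal{X}_Q\otimes_R R/t$ is the uncorrected gluing of the charts, namely $X_0(Q,\mathscr{P},s)\times\Spec\C[\alpha]$.

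The last property uses the explicit computations of Section~\ref{sec:local_models}. The germ of $B_\alpha$ near the image of a boundary zero-stratum $p$ of $X_0(Q,\mathscr{P},s)$ is one of the standard two-dimensional local models studied there: a cone carrying a single singular ray --- a worm at position $\alpha$ --- whose monodromy matches that of the cyclic quotient singularity of $X_Q$ at $p$. By that computation, the restriction of $\mathcal{X}_Q$ to a neighbourhood $U_p$ of $p$ is, as a family over $\Spec\C[\alpha]$, the total space of a one-parameter $\Q$-Gorenstein smoothing of that singularity after simultaneously taking a maximal degeneration of each fiber and formally completing along the central fiber; the identification is made by matching the relevant toric (Newton-polytope) data produced by the algorithm with that of the explicit toric description of the $\Q$-Gorenstein smoothing.

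I expect the principal obstacle to be precisely the claim in the first paragraph: that turning on the parameter $\alpha$ does not disturb consistency of the algorithm --- that the order-by-order vanishing of obstructions, already delicate in the non-locally-rigid setting of Sections~\ref{sec:log_structures}--\ref{sec:gluing}, can be arranged uniformly over $\Spec\C[\alpha]$ rather than merely at each closed point, and in particular remains compatible at the special point $\alpha = 0$, where the algorithm degenerates to the trivial toric one. A secondary difficulty is the bookkeeping needed to upgrade the fiberwise agreement in the last property to a genuine isomorphism of families near each $U_p$, compatibly with the globally defined $\mathcal{X}_Q$.
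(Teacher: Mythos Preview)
Your plan is essentially the paper's own approach: run the Gross--Siebert algorithm with coefficients in $\C[\alpha]$ using the one-parameter family of log structures compatible with $\mathcal{B}_Q$, build $\mathcal{X}_Q$ as the inverse limit of the order-$k$ thickenings, and read off the four bullet points by specialisation together with the explicit boundary-vertex models of Section~\ref{sec:local_models}.  Two remarks on execution.  First, the paper does not argue consistency over $\C[\alpha]$ via ``obstruction spaces are free $\C[\alpha]$-modules'' but rather by a concrete device (Lemma~\ref{lem:family_scattering}): one passes to the \emph{universal} scattering diagram over $\C[Q]$ with formal variables $t_{ijk}$, scatters there once and for all, and then specialises along $t_{ijk}\mapsto c_{ijk}(\alpha)$; this is what guarantees $S_I(\mathfrak{D})(\alpha)=S_I(\mathfrak{D}(\alpha))$ uniformly, including at $\alpha=0$.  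Second, your aside that ``the isomorphism type is independent of $\alpha_0$'' because a rescaling is ``absorbed into an automorphism of $\C\llbracket t\rrbracket$'' is not part of the theorem and is not claimed in the paper---different nonzero $\alpha_0$ give genuinely different log structures (different zero loci of the slab functions), and the theorem only asserts that each such restriction is the Gross--Siebert output for \emph{that} log structure; you should drop this claim.
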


The main difficulty in doing this is that the construction of the formal family at the fiber $\alpha=0$ is different from the other fibers -- indeed, the log-structure there is a section of a different bundle.  We overcome this by giving an explicit description of various rings involved in the Gross--Siebert algorithm.

The use of an order-by-order scattering process means that, outside of certain specific cases, we are unable to write down explicit expressions for the general fibers of the toric degenerations we consider.  One particularly striking case in which this is possible is the case there is only a single (simple) singularity; analysing this case leads us to recover a theorem of Ilten (\!\!\cite{Ilten}):
\begin{thm} \label{thm:Ilten_in_introduction}
For any combinatorial mutation from $P$ to $P'$ there is a flat family $\mathcal{X} \rightarrow \C^2$ such that the fiber over zero is $X_P$ and the fiber of $\infty$ is $X_{P'}$.
\end{thm}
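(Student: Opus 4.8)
The plan is to specialise Theorem~\ref{thm:families} to the affine manifold $B$ carrying a single simple singularity, in which case the reconstruction becomes completely explicit. First I would record the combinatorial dictionary. A combinatorial mutation of the Fano polygon $P$ is prescribed by a primitive linear functional $w$ and a factor $F$ (a lattice segment), the mutated polygon $P'$ being obtained from $P$ by the standard rearrangement of the slices $\{\langle w,\cdot\rangle = h\}\cap P$ according to $F$. I would show that this is precisely the data needed to insert one simple singularity into the affine structure associated to $(P,\mathscr{P})$: the monodromy of the singularity is dictated by $(w,F)$, the singularity moves along a segment of admissible positions, and as it is pushed to either end of that segment it merges into a corner, so that the affine manifold becomes $(P,\mathscr{P})$ at one end and $(P',\mathscr{P}')$ at the other. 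Verifying this is where the hypothesis of an honest combinatorial mutation --- as opposed to an arbitrary pair of Fano polygons --- enters.

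Next I would run the Gross--Siebert algorithm of Theorem~\ref{thm:families} on $B$. The essential simplification is that there is exactly one initial wall, emanating from the unique singularity and carrying a function of the form $(1+\kappa z^m)^{\ell}$, where $z^m$ is the monomial attached to the singularity, $\ell$ is the lattice length of $F$, and $\kappa$ is a free coefficient; a single wall collides with nothing, so the scattering diagram is consistent without further corrections. Consequently the formal families of Theorem~\ref{thm:families} are here \emph{algebraic}: the series occurring in the gluing are finite (or geometric), and the output is a scheme $\cX$, flat over its base, obtained by gluing the two toric charts attached to the chambers adjacent to the wall along the birational transformation determined by $(1+\kappa z^m)^{\ell}$. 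The mutation symmetry $P\leftrightarrow P'$ is realised by $\kappa\leftrightarrow\kappa^{-1}$, so $\kappa$ completes to $\P^1$; presenting $\P^1$ by its two homogeneous coordinates exhibits $\cX$ as a flat family over $\C^2$, as in the statement.

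Finally I would pin down the two distinguished fibres and flatness. Setting $\kappa=0$ collapses the wall function to $1$, so the gluing degenerates to the standard toric one and the fibre is $X_P$; by the $\kappa\leftrightarrow\kappa^{-1}$ symmetry the fibre over $\kappa=\infty$ is $X_{P'}$. Flatness over the whole base follows because $\cX$ is assembled, exactly as in Theorem~\ref{thm:families}, from flat Artinian thickenings, which in this case sum to honest algebraic gluings; and the family is $\Q$-Gorenstein because, by the local models of Section~\ref{sec:local_models}, near each boundary zero-stratum of $X_P$ it is a one-parameter $\Q$-Gorenstein smoothing of the corresponding cyclic quotient singularity. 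This reproves Ilten's theorem.

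I expect the main obstacle to lie in the passage from formal to algebraic families together with the identification of the two ends. One must check that, across all the zero-strata of $\mathscr{P}$ simultaneously, the gluing data produced by the Gross--Siebert algorithm really do patch into a single \emph{algebraic} family over $\C^2$ rather than merely a compatible system of formal thickenings; and one must match the resulting birational gluing with the explicit map defining the mutation of Laurent polynomials, so that the fibres over $0$ and $\infty$ can be read off as $X_P$ and $X_{P'}$ on the nose, not merely as \emph{some} pair of toric degenerations of a common surface. The combinatorial first step, by contrast, should be routine once the affine manifold $B$ is in hand.
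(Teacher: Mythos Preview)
Your proposal is correct and follows the same overall strategy as the paper: construct the tropical Ilten family (a one-parameter family of affine manifolds with a single simple singularity sliding along its monodromy-invariant line between corners of $Q$ and $Q'$), observe that the scattering is trivial so the Gross--Siebert output is polynomial in $t$, and then complete the log-structure parameter to obtain a family over $\C^2$ whose special fibres are $X_P$ and $X_{P'}$.

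The execution differs in one notable respect. You propose to invoke the inverse-limit machinery of Theorem~\ref{thm:families} directly and argue abstractly that absence of scattering forces algebraicity. The paper instead takes a concrete projective route, following \cite{Invitation}: it chooses generators for the graded cone $C(Q)$, writes the toric variety as a $\Proj$ in a weighted projective space, and then deforms the binomial ideal $I_Q$ explicitly by the rule $M_1 = (\beta + \alpha\mathcal{W})^d M_2$. This yields two embedded families (one from $Q$, one from $Q'$) which are glued over $\mathrm{Bl}_0(\C^2)$ and then contracted to $\C^2$; Lemma~\ref{lem:dehomog} checks \emph{a posteriori} that this agrees with the Gross--Siebert output. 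The paper in fact remarks (just after Proposition~\ref{prop:preilten}) that your inverse-limit approach ``would be entirely legitimate'', but opts for the $\Proj$ construction because it produces explicit equations and makes the identification of the two limiting fibres and the passage from formal to algebraic transparent rather than something to be argued. Your anticipated ``main obstacle'' --- patching the local pieces into a genuine algebraic family and reading off the fibres at $0$ and $\infty$ --- is thus exactly what the paper's choice of method is designed to sidestep.
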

\noindent See \S\ref{sec:ilten}.  We refer to the corresponding family of affine manifolds the \emph{tropical Ilten family}.  The Ilten pencil, which has base $\P^1$, is obtained from the family in Theorem~\ref{thm:Ilten_in_introduction}, which has base $\C^2$, by taking the quotient by radial rescaling.  

\subsection*{Acknowledgements}

This work lies within the Gross--Siebert program.  The author learned a great deal about the Gross--Siebert program from the lectures by Mark Gross at the 2013 MIT-RTG Mirror Symmetry Workshop at Big Bear Lake,~CA.  We thank Mark Gross and the participants of the workshop for many useful explanations.  We thank Tom Coates, Alessio Corti, Alexander Kasprzyk and members of the Fanosearch group for useful and inspiring comments and conversations.  The author is supported by an EPSRC Prize Studentship.

\section{Affine Manifolds With Singularities}
\label{sec:affine_manifolds}

In this section we shall introduce affine manifolds with singularities.  From our point of view these are tropical or combinatorial avatars of algebraic varieties.  We shall briefly discuss the connection to the SYZ conjecture, which also offers a first justification for this point of view: the base of a special Lagrangian torus fibration naturally has the structure of an affine manifold.  By way of example: given a toric variety we can form an affine manifold via its moment map, isomorphic to a polygon $Q$.  We shall then consider a suitable notion of families of these objects and specifically how one can `smooth' the corners of a polygon by replacing them with singularities in the interior.   In particular, starting with a Fano polygon $Q$ this will form a combinatorial analogue of the $\Q$-Gorenstein deformations of the associated del~Pezzo surface: indeed, the bulk of the later sections is devoted to reconstructing such an algebraic deformation from this combinatorial data.

\begin{definition} 
  \label{def:affine_manifold_with_singularities}
An \emph{affine manifold with singularities} is a piecewise linear (PL) manifold $B$ together with a dense open set $B_0 \subset B$ and a maximal atlas on $B_0$ that is compatible with the topological manifold structure on $B$ and which makes $B_0$ a manifold with transition functions in $\GL_n\left(\R\right) \rtimes \R^n$.  
\end{definition}

\begin{remark}
  To give a maximal atlas on $B_0$ with transition functions in $\GL_n\left(\R\right) \rtimes \R^n$ is the same as to give the structure of a smooth manifold on $B_0$ together with a flat, torsion-free connection on $TB_0$.
\end{remark}

Following Kontsevich--Soibelman~\cite{KoSo} we can reinterpret this definition in terms of the sheaf of affine functions:
\begin{definition}
The \emph{sheaf of affine functions} $\text{Aff}_{\Z,X}$ on an affine manifold $X$ is the sheaf of functions which, on restriction to any affine chart, give affine functions.
\end{definition}

\begin{lem}[\cite{KoSo}]
Given a Hausdorff topological space $X$, an affine structure on $X$ is uniquely determined by a subsheaf $\text{Aff}_{\Z,X}$ of the sheaf of continuous functions on $X$, such that locally $(X, \text{Aff}_{\Z,X})$ is isomorphic to $(\R^n,\text{Aff}_{\Z,\R^n})$.
\end{lem}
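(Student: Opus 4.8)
The plan is to set up mutually inverse assignments between affine structures on $X$ in the sense of Definition~\ref{def:affine_manifold_with_singularities} --- maximal atlases on $X$ with transition functions in $\GL_n(\R)\rtimes\R^n$ --- and subsheaves $\mathcal{A}\subseteq\mathcal{C}^0_X$ which are locally isomorphic to the model $(\R^n,\text{Aff}_{\Z,\R^n})$, and then to verify that these two assignments are inverse to one another.

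From an affine structure I would produce the sheaf in the obvious way: let $\text{Aff}_{\Z,X}$ be the subsheaf of $\mathcal{C}^0_X$ whose sections over an open $U$ are the continuous functions that restrict to an affine function in every affine chart meeting $U$. Since the transition functions are affine and the composite of an affine function with an affine map is again affine, being affine in one chart is equivalent to being affine in any other; hence this really is a subsheaf, and on an affine chart $\varphi\colon U \xrightarrow{\ \sim\ } V \subseteq \R^n$ it is carried by $\varphi$ onto the restriction of $\text{Aff}_{\Z,\R^n}$. This gives the required local model, and is the routine half of the argument.

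The substantive half is the converse, and it turns on a single rigidity statement: \emph{a homeomorphism $\psi$ between connected open subsets of $\R^n$ with $\psi^{*}\text{Aff}_{\Z,\R^n}=\text{Aff}_{\Z,\R^n}$ is the restriction of an element of $\GL_n(\R)\rtimes\R^n$}. Indeed, pulling back the affine coordinates $y_1,\dots,y_n$ exhibits each component of $\psi$ as an affine function of the source coordinates, so $\psi$ agrees with an affine map $x\mapsto Ax+b$; and since $\psi$ is an open embedding its image lies in no proper affine subspace, which forces $A$ to be invertible. (For the integral refinement, where the slopes are required to lie in $\Z^n$, the same computation gives $A\in M_n(\Z)$ and, applied also to $\psi^{-1}$, gives $A\in\GL_n(\Z)$.) Granting this, suppose $\mathcal{A}$ is a sheaf as in the statement: cover $X$ by opens $U_i$ with isomorphisms $\varphi_i\colon (U_i,\mathcal{A}|_{U_i}) \xrightarrow{\ \sim\ } (V_i,\text{Aff}_{\Z,\R^n}|_{V_i})$ for opens $V_i\subseteq\R^n$. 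The $\varphi_i$ are homeomorphisms onto open subsets of $\R^n$, and on each connected component of an overlap $U_i\cap U_j$ the transition $\varphi_j\circ\varphi_i^{-1}$ preserves $\text{Aff}_{\Z,\R^n}$, hence by the rigidity statement lies in $\GL_n(\R)\rtimes\R^n$. Thus $\{(U_i,\varphi_i)\}$ generates an affine structure on $X$; Hausdorffness of $X$ is exactly what is needed for the underlying space to be a topological manifold so that this makes sense.

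Finally I would check the two assignments are inverse. Starting from an affine structure, forming $\text{Aff}_{\Z,X}$ and then extracting an atlas from local trivialisations of that sheaf returns the maximal atlas one began with: the original charts trivialise $\text{Aff}_{\Z,X}$ by construction, and any further homeomorphism to an open subset of $\R^n$ trivialising $\text{Aff}_{\Z,X}$ is affine with respect to the original charts by the rigidity statement, hence already lies in the maximal atlas. Conversely, starting from $\mathcal{A}$ and building the atlas above, the sections that are affine with respect to every $\varphi_i$ are, locally on each $U_i$, precisely the sections of $\mathcal{A}$; since $\mathcal{A}$ is a sheaf this local identification glues to recover $\mathcal{A}$ on all of $X$. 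The only non-formal ingredient is the rigidity statement about affine maps; the rest is bookkeeping with sheaves and atlases, and I would not expect any serious obstacle beyond organising it cleanly.
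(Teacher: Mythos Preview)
Your argument is correct and complete. The paper does not actually supply a proof of this lemma: it is stated with a citation to \cite{KoSo} and no argument is given. So rather than comparing approaches, your proposal fills a gap the paper leaves to the reference.

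The one point worth tightening is the invocation of Hausdorffness. You say it is ``exactly what is needed for the underlying space to be a topological manifold,'' but the existence of local homeomorphisms to open subsets of $\R^n$ already makes $X$ locally Euclidean; Hausdorffness is an additional axiom in the definition of manifold, not a consequence of the sheaf data. This is a phrasing issue rather than a mathematical error. Otherwise the rigidity step and the bookkeeping are exactly right.
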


\begin{remark}
$\text{Aff}_{\Z,X}$ is a sheaf of $\R$-vector spaces, but as the product of two affine functions is not in general affine, it is not a sheaf of rings.  There is a subspace analogous to the maximal ideal of a local ring, given by the kernel of the evaulation map $\ev \colon {\text{Aff}_{\Z,B}}_p \rightarrow \R$.
\end{remark}

\begin{definition}
  A morphism of affine manifolds is a continuous map $f\colon B \rightarrow B'$ that is compatible with the affine structures on $B$ and $B'$.
\end{definition}

\begin{definition}
  \label{def:tropical_and_integral}
  If the transition functions for $B_0$ lie in $\GL_n\left(\Z\right) \rtimes \R^n$, we say that the affine manifold is \emph{tropical}; this is equivalent to insisting that there is a covariant lattice in $TB_0$ preserved by the connection.   If the transition functions lie in $\GL_n\left(\Z\right) \rtimes \Z^n$ then the affine manifold is called \emph{integral}; this is equivalent to insisting that there  there is a lattice in $B_0$ preserved by the transition functions.
\end{definition}

\begin{notation}
  We shall always assume that affine manifolds are tropical, so there is a lattice $\Lambda_x \subseteq T_xB_0$.  We set $\Delta := B \setminus B_0$, and refer to it as the \emph{singular locus} of the affine structure.  If $\Delta = \varnothing$ then the corresponding affine manifold is called \emph{smooth}.
\end{notation}

The relevance of affine manifolds to mirror symmetry comes from the SYZ conjecture~\cite{SYZ}, which roughly speaking states that a pair of mirror manifolds should carry special Lagrangian torus fibrations that are dual to each other.  If one is in such a favourable setting, the base of this fibration carries a pair of (smooth) affine structures, and, in this so-called \emph{semi-flat} setting, one can reconstruct the original pair of manifolds, $X$,~$\breve{X}$ from the affine structures.  Indeed from a given smooth tropical affine manifold $B$ one may construct a pair of manifolds $X = TB/\Lambda$, $\breve{X} = T^\ast B/\breve{\Lambda}$ where $\Lambda$ is the covariant lattice in $TB$ defined by the affine structure and $\breve{\Lambda} \subset T^* B$ is the dual lattice.  The manifold $X$ carries a canonical complex structure and the manifold $\breve{X}$ carries a canonical symplectic structure~\cite{Gross:SYZ}.  To endow $X$ with a symplectic structure, respectively $\breve{X}$ with a complex structure, we need to attach to $B$ a (multivalued, strictly) convex function $\varphi \colon B \to \R$.  Here there is a canonical choice for $\varphi$: the K\"ahler potential for the McLean metric on $B$~\cite{Gross:SYZ,McLean}.  The convex function $\varphi$ allows us to define the \emph{Legendre dual} $\breve{B}$ of the affine manifold $B$, and one can show that Legendre duality $B \leftrightarrow \breve{B}$ interchanges the pair of affine structures coming from a special Lagrangian torus fibration.  This identification of $TB/\Lambda$ with $T^\ast \breve{B}/\breve{\Lambda}$, and $T^\ast B/\breve{\Lambda}$ with $T \breve{B}/\Lambda$ recovers, as promised, the mirror pair of K\"ahler manifolds $X$,~$\breve{X}$.

\begin{example}
  The standard examples of affine manifolds without boundary or singularities are tori, which have natural flat co-ordinates.  For example, taking the base manifold $B$ to be $S^1$ and endowing $X = T B/\Lambda$ with the canonical complex structure described above yields an elliptic curve $X$.
\end{example}

\begin{example}
  \label{ex:toric}
  Consider a polytope $P \subset \R^n$.  The inclusion $P \to \R^n$ equips the interior $B$ of $P$ with the structure of an affine manifold.  The non-compact symplectic manifold $T^\star B/\breve{\Lambda}$ admits a Hamiltonian action of $(S^1)^n$ for which the moment map is given by the projection to $B$.  It is clear in such examples how to extend the construction of this torus bundle over $B$ to the boundary strata of $P$: indeed this is nothing other than Delzant's construction of symplectic toric varieties from their moment polytopes~\cite{Delzant}.
\end{example}

\begin{remark}
  \label{rem:Auroux}
  As the last example demonstates we shall often be interested in cases where $B$ (or $B_0$) is a \emph{manifold with corners}.  A discussion of mirror symmetry for toric varieties from this perspective may be found in~\cite{Aur1}.  Auroux explains there that
  one may define complex co-ordinates on $\breve{X}$ by taking the areas of certain holomorphic cylinders in $X$, together with certain $U(1)$-holonomies.  After adding compactifying divisors to $X$, these cylinders become discs, and so co-ordinates on the mirror manifold $\breve{X}$ are determined by computing the areas of certain holomorphic discs.  In the toric setting (Example~\ref{ex:toric}) this construction gives global co-ordinates on $\breve{X}$.  In general, and certainly in our case (where singularities are present), computing areas of holomorphic discs will give only local co-ordinates on $\breve{X}$, with the transition functions between these co-ordinate patches reflecting \emph{instanton corrections}.  From this perspective, much of the rest of this article consists of a careful analysis of the instanton corrections in our setting: computing them explicitly where possible, and determining how they vary in certain simple families.  We return to this point in the Conclusion.
\end{remark}

\subsection{Focus-focus singularity - the local model}

In the rest of this paper, we will primarily be concerned with affine manifolds that arise from polytopes, but rather than taking the polytope $Q$ itself as the affine manifold, we shall instead \emph{smooth the boundary}, exchanging the corners of $Q$ for singularities in the interior of the polytope.  The local model for this situation is as follows.  Consider a two-dimensional affine manifold $S_\kappa$, where $\kappa$ is a parameter, defined via a covering by two charts:
\begin{align*}
  U_1 = \R^2\setminus\big(\R_{\geq 0}\times\{0\}\big) &&
  U_2 = \R^2\setminus\big(\R_{\leq 0}\times\{0\}\big)
\end{align*}
with transition function $\phi$ from $U_1$ to $U_2$ given by:
\[
(x,y) \mapsto
\begin{cases}
  \left(x,y\right) & y > 0\\
  \left(x + \kappa y,y\right) & y < 0
\end{cases}
\]
The transition function is piecewise-linear: on the upper half-plane it is the identity transformation, and on the lower half-plane it is a horizontal shear with parameter~$\kappa$.  We will assume throughout that $\kappa \in \Z$; in this case, the affine manifold $S_\kappa$ is integral.  We will consider only affine manifolds with singularities that are locally modelled on some $S_\kappa$.

\begin{definition}
  A \emph{singularity of type $\kappa$} in an affine manifold $B$ is a point $p \in \Delta$ such that $p \not \in \partial B$ and that there is a neighbourhood of $p$ isomorphic as an affine manifold to a neighbourhood of $0 \in S_\kappa$.
\end{definition}

\begin{convention}
  Henceforth any affine manifold $B$ that we consider will be two-dimensional and such that each $p \in \Delta$ is a singularity of type $\kappa_p$ for some $\kappa_p \in \Z$.  In particular, the singular locus $\Delta$ of $B$ is disjoint from the boundary of $B$.
\end{convention}

We will be primarily interested in one-parameter families of such affine structures, and in applying the Gross--Siebert algorithm `fiberwise' to reconstruct a degenerating family.

\begin{remark}\label{rem:fibs}
  The Gross--Siebert algorithm for surfaces cannot be applied to certain `illegal' configurations: one needs to insist that both monodromy-invariant lines and the rays introduced by scattering miss the singular locus.  In practice one often guarantees this by ensuring that singularities have irrational co-ordinates.  (In this context, monodromy-invariant lines and rays have rational slope.)  But this approach generally precludes moving the singularities.  As we shall see, smoothing the corners of a polygon is a particularly fortunate setting, where one can freely slide singularities along monodromy-invariant lines without risking illegal configurations.
\end{remark}

\subsection{Corner smoothing - the local model}
\label{sec:local_model}

We shall now construct a local model for a degeneration. The most general definition of `family of affine manifolds' we shall need consists of locally trivial families of affine structures together with finitely many copies of this local model. 

Fix a rational, convex cone $C$ in $\R^2$ and denote the primitive integral generators of its rays by $v_1$ and $v_2$.   Fix a rational ray $L$ contained in the interior of $C$, let $\ell$ be the primitive integer generator of $L$, and fix an integer $k$ such that the rational cone generated by $v_1$ and $v_2 - k \ell$ either contains $L$ or is itself a line in $\R^2$.  We shall construct a topological manifold $\mathcal{B}_{C,L,k}$, together with a sheaf of affine functions on $\mathcal{B}_{C,L,k}$ and a map $\pi_k\colon \mathcal{B}_{C.L.k} \rightarrow \R_{\geq 0}$ of affine manifolds (where $\R_{\geq 0}$ has its canonical affine structure).

\begin{definition} \label{def:local_model}
As a topological manifold $\mathcal{B}_{C,L,k}$ is equal to $C \times \R_{\geq 0}$.  We give it an affine structure via an atlas with $k+1$ charts.  We define each chart $U_i = \left(C \times \R_{\geq 0}\right)\backslash V_i$ where each $V_i$ is a subset of $L \times \R_{\geq 0}$ as follows:
\begin{align*}
  & V_0 =  \{(x\ell,t) : 0 \leq x \leq t < \infty \} \\
  & V_i = \{(x\ell,t) : \text{$0 \leq x \leq i t$ or $(i+1)t \leq x < \infty$} \} && \text{for $0 < i < k$} \\
  & V_k =  \{(x\ell,t) : 0 < t \leq x < \infty \} 
\end{align*}
with transition functions fixed by the requirement that, for $1 \leq i \leq k$ and for all $t>0$, the charts $U_{i-1}$,~$U_i$ make the point $(i t \ell,t)$ in the fiber $C \times \{t\}$.
\end{definition}

Note that this only makes the complement of $(0,0)$ an affine manifold, as the origin is in the closure of the singular locus but not contained in it.  Despite this, the sheaf of affine functions is still defined in a neighbourhood of $(0,0)$.

\begin{remark}
  Later on we will restrict this family to a subset, replacing $C \times \R_{\geq 0}$ by $U \times [0,T)$ where $U$ is a neighbourhood of the origin in $C$ and $T$ is sufficiently small that there are $k$ singular points on the fiber $U \times \{T\}$.
\end{remark}

\begin{remark}
  There is an obvious generalization of this local model, which would allow the construction of more complicated degenerations.  Rather than introduce a singularity of type~$1$ for each $1 \leq i \leq k$, we may consider a partition $\mathbf{k} = (k_1, \cdots, k_m)$ of $k$ and construct a version $\mathcal{B}_{C,L,\mathbf{k}}$ of $\mathcal{B}_{C,L,k}$, in which the fiber over $t \in \R_{\geq 0}$ contains a singularity of type $k_i$ at $(it\ell,t)$ for $1 \leq i \leq m$.
\end{remark}

\subsection{One-parameter families}

\begin{definition}

We define a \emph{one-parameter degeneration of affine structures} to be a topological manifold with corners $\mathcal{B}$ and a continuous map:
\[
\pi \colon \mathcal{B} \rightarrow \R_{\geq 0}
\]
such that:
\begin{itemize}
\item for some finite set $S$ of points in the boundary of $\pi^{-1}(0)$, $\mathcal{B} \backslash S$ is an affine manifold and $\pi$ is a locally trivial map of affine manifolds; and
\item  for each $p \in S$ there is a neighbourhood $U$ of $p$ in $\mathcal{B}$ and a triple $(C,L,k)$ such that $U$ is isomorphic, as an affine manifold, to an open set of $\mathcal{B}_{C,L,k}$, via an isomorphism that identifies $\pi$ with $\pi_{C,L,k}$.
\end{itemize}
\end{definition}

\begin{remark}
  We will need to consider only one-parameter degenerations of affine structure such that a neighbourhood of the central fiber is locally modelled on $\mathcal{B}_{C,L,k}$ for various triples $(C,L,k)$, possibly with $k=0$.  
\end{remark}

\noindent It would be interesting to consider the generalization of this notion to families over arbitrary affine manifolds, and the associated moduli problems.  

\subsection{Polygons and Singularity Content}

In this section we shall construct a one-parameter degeneration of affine structures from a given Fano polygon which partially smooths each vertex, in the sense we have described above.  This is closely related to the notions of \emph{singularity content}, \emph{class~T} and \emph{class~R} singularities which appear in \cite{SingCon}.

A polygon $P$ is \emph{Fano} if it is integral, contains the origin and has primitive vertices. Fix such a polygon $P$ and denote its polar polygon $Q := P^\circ$.  In particular the origin is contained in the interior of $Q$. Fix a polyhedral decomposition $\mathscr{P}$ of $Q$ by taking the spanning fan and restricting this fan to the polytope $Q$.

Fix a vertex $v \in \text{Vert}(Q)$. The decomposition $\mathscr{P}$ induces a canonical choice of 1-cell $L_v$ for each $v \in \text{Vert}\left(Q\right)$: the 1-cell which is a cone of the spanning fan of $Q$.  Consider the subset $U_v = \Star(v) \subset Q$; $U_v$ is isomorphic to an open subset of a cone $C_v$ with origin $v$ and bounded by the rays containing each edge of $Q$ incident to $v$.  The 1-cell $L_v$ becomes the restriction of a ray in this cone.  To form a triple $(C_v,L_v,k)$ as in \S\ref{sec:local_model}
we still require the choice of a suitable integer $k$.

\begin{definition}\label{def:sing_content}
We shall refer to the maximal integer $k$ such that $(C,L,k)$ satisfy the conditions just before Definition~\ref{def:local_model} as the \emph{singularity content} of the pair $(C,L)$.
\end{definition}

For each vertex $v \in Q$ denote by $k_v$ the singularity content of $(C_v,L_v)$, and choose a function $k\colon \text{Vert}(Q) \rightarrow \Z_{\geq 0}$ such that $0 \leq k(v) \leq k_v$.  We may now form the families $\mathcal{B}_{C_v,L_v,k(v)}$.  Restrict each family to $U_v \times [0,T_v)$ where the fiber over $T_v$ contains $k(v)$ singular points.

\begin{definition}
Let $\pi_{Q,k}\colon \mathcal{B}_{Q,k} \rightarrow [0,T)$ where $T = \min_v(T_v)$ be the following one-parameter degeneration of affine manifolds.  As a topological manifold it is $Q \times [0,T)$, covered by the following charts:
\begin{itemize}
\item $U_v \times [0,T)$ as defined above for each vertex of $Q$ and,
\item $W \times [0,T)$ where $W$ is a neighbourhood of the origin.
\end{itemize}
We may regard $U_v \times [0,T)$ as an affine manifold, with affine structure induced from $\mathcal{B}_{C_v,L_v,k(v)}$.  We  define the affine structure on $\mathcal{B}_{Q,k}$ by insisting that the transition functions between the $k(v)^{\text{th}}$ chart of $U_v \times [0,T)$ and the $k(v')^{\text{th}}$ chart of $U_{v'}\times [0,T)$ is the identity for vertices $v$ and $v'$, and the transition function between each of these charts and $W \times [0,T)$ is also the identity.
\end{definition}

\begin{notation}
We will typically wish to smooth the corners as much as possible, so we use the notation
$\pi_Q\colon \mathcal{B}_Q \rightarrow \R_{\geq 0}$ for the map $\pi_{Q,k}\colon \mathcal{B}_{Q,k} \rightarrow \R_{\geq 0}$ where $k$ is the function sending each vertex to its singularity content.
\end{notation}

We next show that our notion of singularity content (Definition~\ref{def:sing_content}) coincides with that of Akhtar--Kasprzyk~\cite{SingCon}. We recall that given a Fano polygon $P \subset N_{\R}$ we may consider an edge $e$ containing $v_1,v_2 \in \text{Vert}(P)$.  The edge defines an (inward-pointing, primitive) element of the dual lattice $w \in M$ such that $w(e)$ is a constant non-zero integer $l$.  We may also consider the cone over the edge $e$, which we denote $C_e$.  Let $\theta$ denote the lattice length of the line segment from $v_1$ to $v_2$.  Writing $\theta = n l + r$ where $0 \leq r < l$, decomposes $C_e$ into:
\begin{enumerate}
\item A collection of $n$ cones whose intersection with the affine hyperplane defined by $w(v) = l$ is a line segment of length $l$; and, if $r>0$,
\item A single cone of width $r<l$.  This is the \emph{residual} cone from~\cite{SingCon}.
\end{enumerate}
If $C_e$ contains no residual cone then we say that $C_e$ is of \emph{class~T}.  Akhtar--Kasprzyk call $n$ the singularity content of $C_e$.

Consider an edge $e$ of $P$ with vertices $v_1, v_2$; this determines a vertex $v_e$ of the polar polygon $Q$, and thus a cone $C$ with origin at $v_e$, having rays dual to $v_1$ and $v_2$.  
The normal direction to $e$ defines a ray in $Q$ passing though $v_e$ and the origin.  Thus to the polygon $P$ and edge $e$, we may associate a pair $(C,L)$.

\begin{lem}
The singularity content of $(C,L)$ as in Definition~\ref{def:sing_content} is equal to the singularity content of the cone over the edge $e$ as defined in~\cite{SingCon}.
\end{lem}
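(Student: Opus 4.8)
The plan is to prove the equality by an explicit computation in adapted coordinates, showing that both quantities count the same thing: the number of width‑$l$ slices that fit inside the edge $e$.

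First I would normalize. Choose a basis of $N$ in which $w$ is a standard dual basis vector, so that $e$ lies on the affine line $\{x_2 = l\}$ with endpoints $v_1 = (a_1,l)$ and $v_2 = (a_2,l)$; here $\gcd(a_i,l)=1$ (the $v_i$ are primitive), $\theta = |a_1-a_2|$, and after a further shear in $\GL_2(\Z)$ fixing $w$ I may assume $0\le a_2<l$. Dualizing, the vertex $v_e$ of $Q=P^\circ$ and the tangent cone $C$ of $Q$ at $v_e$ are read off from the two facet inequalities of $Q$ coming from $v_1$ and $v_2$: after translating $v_e$ to $0$, $C$ is the cone dual to $\Cone(v_1,v_2)\subset N_\R$, and its two primitive ray generators are the primitive covectors vanishing on $v_1$, respectively $v_2$. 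The ray $L$ is the image of the segment $[\,0,v_e\,]\subset Q$ based at $v_e$, and $\ell$ is its primitive generator, pointing in the direction of $w$; one checks directly that $L$ lies in the interior of $C$, so that $(C,L,k)$ is a candidate triple in the sense of \S\ref{sec:local_model} for suitable $k$.

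With these formulae in hand I would unwind Definition~\ref{def:sing_content}. Substituting the explicit ray generators and $\ell$, the requirement that $\Cone(v_1, v_2-k\ell)$ contain $L$ becomes a single linear inequality in $k$ --- concretely, that the $k$‑th width‑$l$ slice cut off $\Cone(v_1,v_2)$ still meets the edge $e$ rather than overshooting it --- while the requirement that this cone be a line becomes the boundary equality, which occurs exactly when $r=0$, i.e.\ when $C_e$ is of class~T. Solving, the largest admissible $k$ is $n=\lfloor\theta/l\rfloor$: when $r>0$ only strict containment is possible and this forces $k\le n$; when $r=0$ the degenerate (line) case contributes the extra value $k=n$. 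Thus the singularity content of $(C,L)$ in the sense of Definition~\ref{def:sing_content} equals Akhtar--Kasprzyk's $n$, and --- as a bonus --- the class~T/residual dichotomy of \cite{SingCon} matches the ``line versus genuine cone'' dichotomy of \S\ref{sec:local_model}.

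The main obstacle, I expect, will be the second step: pinning down $(C,L)$ and, crucially, the integral affine structure near $v_e$. Because $P$ is not assumed reflexive, $v_e$ is genuinely fractional and the corner of $Q$ there is a cyclic quotient corner of order $\theta l$; one must take care that ``primitive generator of $L$'' and the shear $v_2\mapsto v_2-k\ell$ are computed in the lattice actually carried by the affine manifold, so that each unit of $k$ corresponds to one width‑$l$ slice (equivalently, one monodromy‑invariant wall along which a singularity may slide) and the count comes out to $\lfloor\theta/l\rfloor$ rather than to $\theta$. Once this bookkeeping is set up, the remaining algebra is routine.
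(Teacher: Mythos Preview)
Your approach is essentially the same as the paper's: normalize coordinates so that the edge $e$ of $P$ is horizontal at a fixed height, compute the vertex $v_e$ and the tangent cone $C$ of $Q$ explicitly in the dual, identify $L$ as the vertical ray, and reduce the containment condition of Definition~\ref{def:sing_content} to a single linear inequality in $k$. The paper's proof does exactly this, only more tersely: with vertices $(a_1,-h)$, $(a_2,-h)$ it records the edge directions $(-h,-a_1)$ and $(h,a_2)$ of $C$ and then simply asserts that the singularity content equals $\max\{k\in\Z_{\geq 0}: a_2-kh\geq a_1\}=\lfloor\theta/h\rfloor$. The subtlety you flag in your final paragraph --- that the shear must be normalized so each unit of $k$ removes one width-$l$ slice (giving $\lfloor\theta/l\rfloor$) rather than one lattice step along $L$ (giving $\theta$) --- is precisely the step the paper does not spell out; it is absorbed into that asserted formula without comment. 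So you have correctly located the only nontrivial point in the argument.
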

\begin{proof}

After a change of co-ordinates in $N$ we may assume that the vertices $v_1$,~$v_2$ of $e$ are $(a_1,-h)$ and $(a_2,-h)$ respectively.  The rational polygon $Q$ then has a vertex $v_e = (0,-1/h)$ and edges which contain this vertex in directions $(-h,-a_1)$ and $(h,a_2)$.  This defines the cone $C$ above.  The ray L is vertical, and the singularity content of $(C,L)$ is:
\[
\max\{k \in \Z_{\geq 0} : a_2 - kh \geq a_1 \}
\]
This is the largest $k$ such that $kh \leq a_2 - a_1$, and since $\theta = a_2 - a_1$ is the lattice length of the edge $e$, we see that the two definitions of singularity content coincide.
\end{proof}

\begin{definition}
Let $B$ be an affine manifold with singularities and corners, and $\mathscr{P}$ a polygonal decomposition of $B$.  This pair is of \emph{polygon type} if it is isomorphic to a fibre of a family $\pi_{Q,k} \colon \mathcal{B}_{Q,k} \rightarrow \R_{\geq 0}$.
\end{definition}

\section{From Affine Manifolds to Deformations: an Outline}

We are now nearly in a position to apply the Gross--Siebert reconstruction algorithm to our base manifolds.  Since we will require a slight generalization of the Gross--Siebert algorithm and since some of the details will be important later in the paper, we present the procedure in some detail.  As a consequence sections ~\ref{sec:log_structures} to ~\ref{sec:gluing} draw heavily on the paper~\cite{GS1} of Gross--Siebert and the book~\cite{TropGeom} by Gross.

As input data for this algorithm we require a two-dimensional affine manifold with singularities, plus some extra data attached to it.  In section~\ref{sec:log_structures} we describe this extra data, introducing the notion of \emph{log~structure} and \emph{open gluing data}, and explain how these data together determine the central fiber $X_0(B,\mathscr{P},s)$ of a toric degeneration.

In section~\ref{sec:structures} we define the structure on the affine manifold with singularities plus log data, referred to simply as a \emph{structure}, which encodes an $n$th-order deformation of $X_0(B,\mathscr{P},s)$.  Section~\ref{sec:scattering} is then devoted to a description of the process (``scattering'') by which an $n$-structure can be transformed into an $(n+1)$-structure; in other words, an $n$th-order deformation can be prolonged to an $(n+1)$st-order deformation.  Finally we describe in section~\ref{sec:gluing} how to pass from a structure to an $n$th-order deformation of the central fiber.

The rest of the article then applies this reconstruction algorithm to our original problem of smoothing cyclic quotient surface singularities.  This accomplished in a series of steps:

\begin{enumerate}
\item In section~\ref{sec:local_models} we compute explicitly the local model at each boundary zero stratum.
\item In section~\ref{sec:smoothing} we return to the original problem: taking a polygon we show how the tropical family constructed in section~2 may be lifted order by order to give an algebraic family over $\Spf \C[[t]]$.  Away from the central fiber, this is an application of the generalized Gross--Siebert algorithm; near the central fiber, this makes use of the local models computed in section~\ref{sec:local_models}.  We further show that the local models at the vertices are compatible with the canonical cover construction, and thus that the family that we construct is $\Q$-Gorenstein.
\item In section~\ref{sec:ilten} we consider the special case in which a single singularity slides along its monodromy-invariant line from one corner into the opposite edge.  Since there is no scattering diagram to consider, the tropical family here may be lifted to an algebraic family over $\P^1$; once again this algebraic family is $\Q$-Gorenstein.
\end{enumerate}

\section{Log Structures on the Central Fiber}
\label{sec:log_structures}

In secion~\ref{sec:affine_manifolds} we have considered the tropical analogue of smoothing the class-T singularities of a Fano toric surface.  As explained, a version of the Gross--Siebert algorithm will allow us to reconstruct from this an algebraic family, the central fiber of which is itself the restriction to a formal neighbourhood of the central fiber of a degeneration of the Fano toric surface.  The general fiber will be a different formal family with the same central fiber.  The data appended to this central fiber that dicates which smoothing we take is a \emph{log structure}.  In this section we give a very functional description these log structures.  However for a complete explanation of this notion, and its relevance to the Gross--Siebert algorithm, the reader is referred to \cite{GSlog,GS1}.  For the rest of this section we fix a triple $\left(B,\mathscr{P},s\right)$, where $\mathscr{P}$ is a polyhedral subdivison of $B$ into convex, rational polyhedra.  Here $s$ is a choice of \emph{open gluing data}, a concept we will also summarise in this section.

\subsection{Construction of the central fiber}

The method for constructing a scheme from the pair $\left(B,\mathscr{P}\right)$ is straightforward.  Each polygon in the decomposition $\mathscr{P}$ defines a toric variety via its normal fan, and the central fiber is constructed by gluing these along the strata they meet along in $\mathscr{P}$.  Formally speaking, in order to define this gluing, we define a small category associated to a polyhedral decomposition:
\begin{definition}
Let $\mathscr{P}$ also denote the category which has:
\begin{description}
\item[Objects] The strata of the decompostion.
\item[Morphisms] At most a single morphism between any two objects, where $e\colon\omega \rightarrow \tau$ exists iff $\omega \subseteq \tau$.
\end{description}
\end{definition}

We next define a contravariant functor $V \colon \mathscr{P} \Rightarrow \textrm{AffSchemes}$.  Its action on objects is as follows.
Fix a vertex $v \in \mathscr{P}^0$.  At $v$ there is a fan $\Sigma_v \subseteq T_vB$ given by all the strata of $\mathscr{P}$ that meet $v$.  Define $K_\omega$ to be the cone in $\Sigma_v$ defined by the element $\omega \in \mathscr{P}$.

\begin{definition}[of $V$ on zero-dimensional objects]\label{def:V_for_vertices}
The co-ordinate ring of $V\left(v\right)$ is given by the Stanley--Reisner ring of the fan $\Sigma_v$: for lattice points $m_1$,~$m_2 \in |\Sigma_v|$, we set
\[
m_1.m_2 = 
\begin{cases}
  m_1.m_2 & \text{if $m_1$,~$m_2 \in K_\omega$ for some $\omega \in \Sigma_v$}\\
  0 & \text{otherwise}
\end{cases}
\]
\end{definition}

Given a stratum $\tau \in \mathscr{P}$ and a vertex $v$ of $\tau$, we define a fan around $v$:
\[
\tau^{-1}\Sigma_v = \left\{ K_e + \Lambda_{\tau,\R} : \text{$K_e \in \Sigma_v$, $e\colon v \rightarrow \sigma$ factoring though $\tau$} \right\}
\]
recalling from \cite{GS1} that $\Lambda_{\tau, \R}$ is the linear subspace generated by $\tau$ in $T_vB$.  We remark, as in \cite{GS1}, that this subspace depends only on $\tau$ and not on the choice of vertex $v$.
We can now define the image of a stratum $\tau$ under $V$:

\begin{definition}[of $V$ on positive-dimensional objects]
\[ 
V\left(\tau\right) = \Spec k\left[\tau^{-1}\Sigma_v\right] 
\]
where this $k$-algebra is interpreted as the Stanley--Reisner ring, as in Definition~\ref{def:V_for_vertices}.
\end{definition}

We now wish to define the functor $V$ on morphisms.  There is an obvious choice, namely sending a morphism $\tau \to \omega$ to the natural inclusion map $V\left(\tau\right) \rightarrow V\left(\omega\right)$ given by the fan.  However one is free to compose this inclusion map with any choice of toric automorphism of $V\left(\tau\right)$.  The choices of such automorphisms for every inclusion $\omega \hookrightarrow \tau$ form exactly the \emph{Open gluing data} of \cite{GS1}, which we denote by $s$.  This choice is not arbitrary, since $V$ should be functorial: this constraint leads to the precise definition of open gluing data which we shall describe below.  Once the definition of open gluing data is in place, and thus we have a well-defined functor $V$, we may then define the central fiber as the colimit:
\begin{equation}
  \label{eq:colimit}
  \prod_{\omega \in \mathscr{P}}{V\left(\omega\right)} \rightarrow X_0\left(B,\mathscr{P},s\right) 
\end{equation}

\subsubsection{Open Gluing Data:}

In \cite{GS1} the authors explain that the toric automorphisms of an affine piece $V(\tau) = \text{Spec}\left(k\left[\tau^{-1}\Sigma_v\right]\right)$ for $v$ a vertex of $\tau$ are in bijection with elements of a set $\textrm{PM}\left(\tau\right)$ defined as follows.

\begin{definition}\label{def:PM_functions}
Given $\tau \in \mathscr{P}$ and a vertex $v \in \tau$ we define $\textrm{PM}\left(\tau\right)$ to be the set of maps
$\mu \colon \Lambda_v\cap |\tau^{-1}\Sigma_v| \rightarrow k^{\times}$ such that:
\begin{itemize}
\item for any maximal cone $\sigma$ of $\tau^{-1}\Sigma_v$, the restriction of $\mu$ to $\Lambda_v \cap \sigma$ is a homomorphism; and
\item for any two maximal dimensional cones $\sigma$,~$\sigma'$, we have
  \[
  \mu_{\sigma}|_{\Lambda_v \cap \sigma \cap \sigma'} = \mu_{\sigma'}|_{\Lambda_v \cap \sigma \cap \sigma'}.
  \]
\end{itemize}
\end{definition}

\noindent As remarked in \cite{GS1}, whilst this description of $\textrm{PM}\left(\tau\right)$ depends on $v \in \tau$, the set itself is independent of $v$.

\begin{remark} \label{rem:homomorphisms}
An elementary observation we shall use repeatedly in what follows is that the set of homomorphisms $\Lambda_v \cap \sigma \to k^\times$, where $\sigma$ is a maximal dimensional cone, does not depend on the choice of maximal cone $\sigma$.
\end{remark}

\begin{definition} \label{def:open_gluing_data} 
A collection of \emph{open gluing data} is a set
\[
s = \left\{s_e  \in \textrm{PM}\left(\tau\right) \;\mid\; e\colon\omega \rightarrow \tau \right\}
\]
such that if $e\colon \omega \rightarrow \tau$, $f\colon \tau \rightarrow \sigma$ then
$s_f.s_e = s_{f\circ e}$ on the maximal cells where these are defined.  We also insist that $s_{id} = 1$. 
\end{definition}

\noindent The conditions in Definition~\ref{def:open_gluing_data} are precisely those required to ensure that $V$ is a functor.  

\begin{definition}
Collections of open gluing data $s_e$, $s_e'$ are \emph{cohomologous} if there is a collection
$\left\{t_\omega \in \textrm{PM}\left(\omega\right) : \omega \in \mathscr{P} \right\}$ such that\footnote{Here we use the fact that $t_\omega \in \textrm{PM}\left(\omega\right)$ determines a unique element in $\textrm{PM}\left(\tau\right)$, which we also denote by $t_\omega$.}
$s'_e = t_\tau t^{-1}_\omega s_e$ whenever $e \colon \omega \to \tau$.
\end{definition}

\begin{remark} \label{rem:cohomologous}
In \cite{GS1} it is proved that the schemes one obtains via \eqref{eq:colimit} using cohomologous gluing data are isomorphic. 
\end{remark}

\begin{prop}
\label{prop:no_glue}
Let $\left(B,\mathscr{P}\right)$ be of polygon type.  Then all choices of open gluing data are cohomologous.
\end{prop}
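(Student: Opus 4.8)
The plan is to show that any collection of open gluing data on $(B,\mathscr{P})$ of polygon type is cohomologous to the trivial one, by exhibiting the required correction functions $t_\omega \in \textrm{PM}(\omega)$ explicitly. The key observation is that cohomology of gluing data is governed by a \v{C}ech-type complex built from the sheaf $\mathcal{PM}$ whose sections over a stratum $\tau$ are $\textrm{PM}(\tau)$, and that what we want is the vanishing of $H^1$ of (a version of) this sheaf on the poset $\mathscr{P}$. So the first step is to recall from \cite{GS1} the precise cohomological interpretation: isomorphism classes of open gluing data (modulo cohomologous equivalence) are classified by a group $H^1$ of the complex $\check{C}^\bullet(\mathscr{P},\mathcal{PM})$, or equivalently — since $\textrm{PM}(\tau)$ is an extension involving $k^\times$-valued piecewise-multiplicative maps — by a cohomology group that splits into a ``piecewise-linear'' part (valued in a sheaf $\mathcal{LS}^+$ of piecewise-linear functions, or rather the associated abelian sheaf) and a part valued in the constant sheaf $k^\times$. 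I would isolate these two contributions and show each vanishes.

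Second, I would handle the $k^\times$-part: this is the cohomology of the constant sheaf $k^\times$ on the order complex of $\mathscr{P}$, which is contractible because $B$ (being a disc with corners, $Q \times \{0\}$ or a fibre of $\mathcal{B}_{Q,k}$) is contractible, and more to the point the nerve of the cover by stars of vertices is contractible; hence the relevant $H^1$ vanishes. Third, and this is where the polygon-type hypothesis really enters, I would handle the piecewise-linear part. For $(B,\mathscr{P})$ of polygon type every maximal cell is two-dimensional and the one-cells all emanate from the interior vertex of $Q$ (or, more precisely, the structure of $\mathscr{P}$ coming from the spanning fan is a ``star-shaped'' decomposition), so the discrete Legendre dual / multivalued-PL-function obstruction is trivial: any piecewise-linear gluing datum along a chain of strata can be straightened out because there is no nontrivial monodromy of the lattice $\Lambda$ around the $1$-skeleton that the gluing data could detect — the singularities of type $\kappa_p$ all sit in the \emph{interior} away from the strata, and the local charts $U_v \times[0,T)$ are glued by the identity by construction. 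Concretely I would build $t_\omega$ by downward induction on $\dim\omega$ (or by fixing a reference vertex in each connected chain and propagating), using at each step that $\textrm{PM}$ of a stratum surjects compatibly onto $\textrm{PM}$ of its faces (Remark~\ref{rem:homomorphisms}).

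I expect the main obstacle to be bookkeeping the two ingredients of $\textrm{PM}(\tau)$ simultaneously — the piecewise-linear ``shear'' data and the $k^\times$-valued ``scaling'' data — and checking that the induction producing $t_\omega$ respects the cocycle condition $s_f \cdot s_e = s_{f\circ e}$ globally rather than just along individual chains; this is really a connectedness/simple-connectedness statement about the poset $\mathscr{P}$, and the cleanest route is probably to invoke the explicit description of the complex from \cite{GS1} and reduce to the contractibility of $B$ together with the triviality of the affine monodromy on the $1$-skeleton, both of which hold by the construction of $\mathcal{B}_{Q,k}$ in Section~\ref{sec:affine_manifolds}. A secondary subtlety is the non-manifold point at the origin (the closure of the singular locus), but since no stratum of $\mathscr{P}$ and no gluing datum is supported there, it does not affect the argument.
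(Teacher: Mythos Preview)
Your overall framing --- that cohomologous classes of open gluing data are classified by an $H^1$ of a \v{C}ech-type complex on the poset $\mathscr{P}$ with values in $\mathcal{PM}$, and that one wants this $H^1$ to vanish --- is sound, and the concluding paragraph (build $t_\omega$ by induction using Remark~\ref{rem:homomorphisms}) points in the right direction. But the middle of your argument contains a real confusion that would not go through as written.

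The claimed splitting of $\textrm{PM}(\tau)$ into a ``piecewise-linear part valued in $\mathcal{LS}^+$'' and a ``constant sheaf $k^\times$'' part is not correct. The functions in $\textrm{PM}(\tau)$ are entirely $k^\times$-valued (Definition~\ref{def:PM_functions}); there is no PL component, and $\mathcal{LS}^+$ is the sheaf governing \emph{log structures}, which is a different object from the gluing-data sheaf. Moreover $\mathcal{PM}$ is not a constant sheaf: $\textrm{PM}(\tau)$ depends on the fan $\tau^{-1}\Sigma_v$, so contractibility of $B$ alone does not kill $H^1$. What actually makes the computation trivial is the specific combinatorics of the spanning-fan decomposition --- every boundary edge lies in a unique maximal cell, every boundary vertex lies in a unique interior edge --- which is exactly what Remark~\ref{rem:homomorphisms} captures as the isomorphisms $\textrm{PM}(\eta_j)\cong\textrm{PM}(\sigma_j)$ and $\textrm{PM}(\omega_i)\cong\textrm{PM}(\tau_i)$.

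The paper's proof bypasses the cohomological language entirely and simply carries out the inductive construction you allude to, but explicitly: it names the five families of morphisms in $\mathscr{P}$, then in three passes chooses $t_\tau$, $t_\sigma$, $t_\omega$, $t_\eta$ to kill the gluing data on morphisms of types $e^1$, $e^2$, $e^3$, $e^4$ in turn, and finally uses the cocycle identity $s_f\cdot s_e = s_{f\circ e}$ to check that $e^5$ and ${e^4}'$ are forced to be trivial as well. If you drop the spurious splitting and instead write out this direct construction --- which is what your final paragraph is reaching for --- you recover the paper's argument.
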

\begin{proof}
Fix a polygon $Q$ and label the various strata of $\mathscr{P}$:
\begin{center}
\includegraphics*[viewport=164 405 382 659]{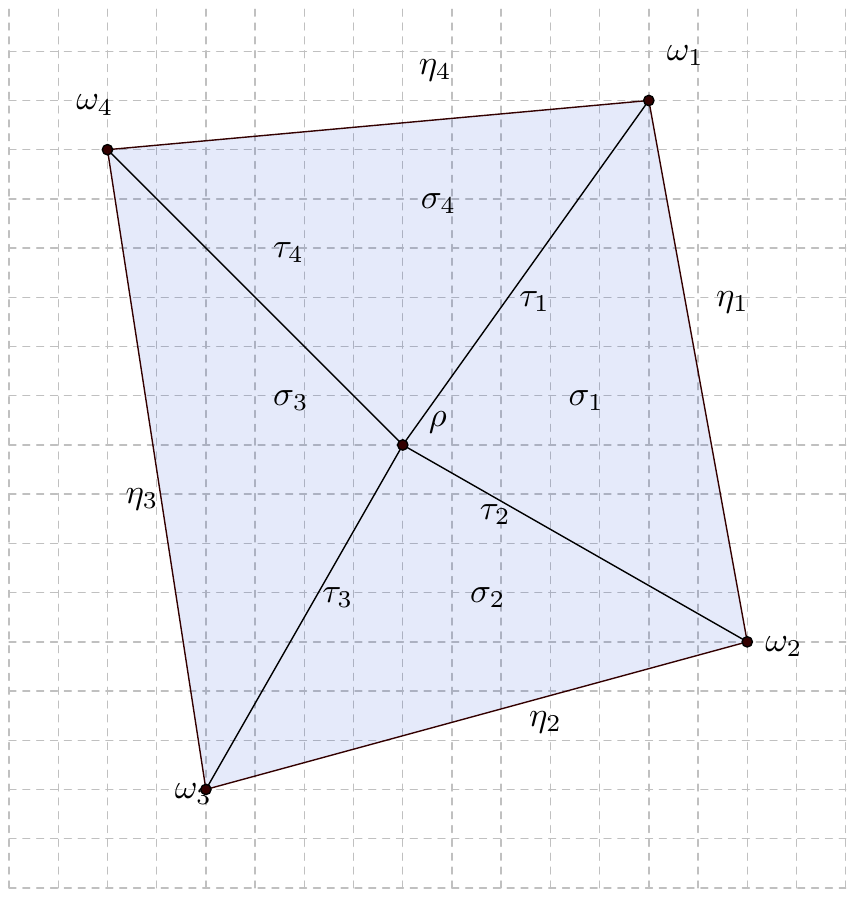}
\end{center}
We need to show that, given any open gluing data $s$ for $\left(B_Q,\mathscr{P}\right)$, we can find a set $\left\{t_\omega \in \textrm{PM}\left(\omega\right) : \omega \in \mathscr{P}\right\}$ such that $s_e = t_\tau t^{-1}_\omega$ for every $e \colon \omega \rightarrow \tau$.  By Remark~\ref{rem:homomorphisms} we have that $\textrm{PM}\left(\eta_j\right) \cong PM\left(\sigma_j\right)$ and $\textrm{PM}\left(\omega_i\right) \cong PM\left(\tau_i\right)$ for all~$i$ and~$j$. Open gluing data $s$ are specified by the following five families of piecewise-multiplicative functions:
\begin{enumerate}
\item $e^1_i\colon \rho \rightarrow \tau_i$
\item $e^2_i\colon \tau_i \rightarrow \sigma_i$, $e^2_i{}'\colon \tau_i \rightarrow \sigma_{i-1}$
\item $e^3_i\colon \omega_i \rightarrow \tau_i$
\item $e^4_i\colon \omega \rightarrow \eta_i$, $e^4_i{}'\colon \omega \rightarrow \eta_{i-1}$ 
\item $e^5_i\colon \eta_i \rightarrow \sigma_i$
\end{enumerate}

We first define open gluing data $s^1$ cohomologous to $s$ by setting $t_\tau = s^{-1}_{e^1_i}$.  Thus $s^1_{e^1_i} = 1$.
Next we observe that $s^1_{e^2_i} = s^1_{e'^2_{i+1}}$, since we have insisted that $s^1_{e^1_i}s^1_{e^2_i} = s^1_{e^1_{i+1}}s^1_{e'^2_{i+1}}$.  Therefore we may define open gluing data $s^2$ cohomologous to $s^1$ by setting $t_{\sigma_i} = (s^1_{e^2_i})^{-1}$.  By construction, $s^2$ associates the trivial element of $\textrm{PM}$ to any morphism between any of $\rho$,~$\tau_i$ and $\sigma_j$.  We now define open gluing data $s^3$ cohomologous to $s^2$ using
$t_{\omega_i} = (s^2_{e^3_i})^{-1}$ and $t_{\eta_i} = (s^2_{e^4_i})^{-1}$.

We claim that the open gluing data $s^3$ are trivial.  First we check $s^3_{e^5_i}$.
We have:
 \[
s^3_{e^5_i} = s^3_{e^4_i}s^3_{e^5_i} = s^3_{e^3_i}s^3_{e^2_i} = 1
\]
where the first equality is the statement that $s^3_{e^4_i} = 1$ together with Remark~\ref{rem:homomorphisms}.  Finally we need to check that $s^3_{e'^4_i} = 1$.  But $s^3_{e'^4_{i+1}}.s^3_{e^5_i} = s^3_{e^4_i}.s^3_{e^5_i}$, so this follows.  Thus any open gluing data for $(B,\mathscr{P})$ are cohomologous to the trivial gluing data.
\end{proof}

Proposition~\ref{prop:no_glue} and Remark~\ref{rem:cohomologous} together show that the scheme obtained from $V$ by gluing (as in equation~\ref{eq:colimit}) is independent of the choice of open gluing data.  Thus we will suppress the dependence on this choice in what follows, assuming that $V$ is constructed using trivial gluing data.

\subsection{A Description Of The Log Structure}

In this section we describe, following~\cite{GS1}, how one may attach a space of log structures to a triple $\left(B, \mathscr{P}, s\right)$.  We begin by describing a sheaf, of which log structures will be (certain) sections.

\begin{definition}\label{def:pre_log}
  Let $\rho \in \mathscr{P}$ be a $1$-cell and let $V_\rho$ be the associated toric variety.  Let $k$ be the total number of singularities of the affine structure on $\rho$, counted with multiplicity\footnote{This is the lattice length of what Gross--Siebert call the \emph{monodromy polytope}, which here is a line segment.}.  Let $v_1$,~$v_2$ be the vertices of $\rho$, and cover $V_\rho$ with two charts $U_i = V\left(v_i\right)\cap V_\rho$.  We shall define a sheaf $\mathcal{N}_\rho$ on $V_\rho$ by setting $\mathcal{N}_\rho\left(U_i\right) = \mathcal{O}_{V_\rho}|_{U_i}$ and using the change of vertex formula
\[
f_{\rho,v_1} = z^{km^{\rho}_{v_1, v_2}}f_{\rho,v_2}
\]
where $m^{\rho}_{v_1, v_2}$ is the primitive vector along $\rho$ from $v_1$ to $v_2$.
\end{definition}

This defines an invertible sheaf.  If the vertices of $\rho$ are integral then $V_\rho$ is canonically isomorphic to $\P^1$ and the sheaf $\mathcal{N}_\rho$ is the line bundle $\mathcal{O}_{\P^1}\left(k\right)$.  In particular the number of zeroes of a generic section of $N_\rho$ is equal to the number of singular points of the affine manifold supported on this stratum, counted with multiplicity.  When the vertices $v_i$ are not integral the 1-strata are canonically identified with  the weighted projective line $\P(a,b)$, where~$a$ and~$b$ are the indices of the respective vertices, and the sheaf $\mathcal{N}_\rho$ is the line bundle $\mathcal{O}\left(k\lcm(a,b)\right)$.  

\begin{remark}
The orbifold structure here depends on the polarization of the central fiber.  In any given example, one can repolarize the central fiber by scaling all the polygons until every vertex is integral; this induces a Veronese embedding on the $1$-strata $\P(a,b)$ considered above.  However this rescaling increases the number of interior integral points we need to consider, and in general leads to much more complicated embeddings.
\end{remark}

\begin{definition}
The sheaf of pre-log structures $\mathcal{LS}^+_{pre, X} $ is defined to be $\oplus_{\rho}\mathcal{N}_\rho$ where $\mathcal{N}_\rho$ is the extension by zero of the sheaf in Definition~\ref{def:pre_log}.
\end{definition}

Log structures will be sections of the sheaf $\mathcal{LS}^+_{pre, X}$ that satisfy a consistency condition that we now describe~\cite{GS1}.  Given a vertex $v \in \mathscr{P}$ fix:
\begin{itemize}
\item A cyclic ordering of the $1$-cells $\rho_i$ containing $v$;
\item Sections $f_i$ of $\mathcal{N}_{\rho_i}$; and
\item Dual vectors $\breve{d}_{\rho_i}$ annihilating the tangent spaces of $\rho_i$, and chosen compatibly with the cyclic ordering of $\rho_i$.
\end{itemize}
The consistency condition that we require is:
\[
\prod{\breve{d}_{\rho_i}\otimes_\Z f_i\big|_{V_v}} = 0\otimes 1
\]

\begin{remark}
In \cite{GS1} a further condition, \emph{local rigidity}, is imposed on $X_0(B,\mathscr{P},s)$ which, roughly speaking, is that the sections $f_i$ associated to the $1$-strata by the log structure do not factorize.  This is not a condition that we shall impose in our context.
\end{remark}

\begin{remark}
  Given a lattice polygon $Q$, we have constructed a family of affine manifolds $\mathcal{B}_{Q,k} \rightarrow \R_{\geq 0}$.  One could also consider the affine manifold of polygon type $(B, \mathscr{P})$ constructed from $Q$, and place a log structure on the scheme $X_0(B, \mathscr{P},s)$.    The choices involved in these two constructions are very closely related, as we now explain.

\begin{definition}\label{def:compatible}
Given any one parameter degeneration of affine manifolds $\pi \colon \mathcal{B} \rightarrow \R_{\geq 0}$ observe that any fiber $B$ of $\pi$ gives the same variety $X(B,\mathscr{P},s)$.  A one parameter family of log structures $s(x) \in \Gamma(\mathcal{LS}^+_{pre, X_0}) $, over $\C$ is said to be \emph{compatible with $\mathcal{B}$} if for each interior 1-cell $\tau$ and for each $x \in \C$ the following two subsets of $B$ coincide and have the same multiplicities:
\begin{enumerate}
\item The image of the zero set of the section $s(x)$ under the moment map.
\item The singular set $\Delta \subset B$, counted with multiplicity by singularity type.
\end{enumerate}
\end{definition}

\noindent Any one-parameter degeneration of affine manifolds $\pi \colon \mathcal{B} \rightarrow \R_{\geq 0}$ gives rise to a compatible one-parameter family of log structures.
\end{remark}

\section{Structures on Affine Manifolds}
\label{sec:structures}

In this section we define a \emph{structure} on $\left(B,\mathscr{P},\phi\right)$.  This is a purely combinatorial construction, which will encode the various functions used to reconstruct the formal deformation of the maximally degenerate variety $X_0(B, \mathscr{P},s)$.  This section is largely an exegesis of \cite{TropGeom}, Chapter 6.

\subsection{Exponents and orders}

Throughout this section we shall fix a triple $\left(B, \mathscr{P}, \phi\right)$ where:
\begin{enumerate}
\item $B$ is an affine manifold with singularities and corners.
\item $\mathscr{P}$ is a polygonal decomposition of $B$ into rational, convex polyhedra.
\item $\phi$ is a multi-valued piecewise linear function which is linear when restricted to full-dimensional cells.
\end{enumerate}

\begin{remark}
The multi-valued nature of $\phi$ reflects the fact that $B$ has singularities: $\phi$ may be defined as an affine function on the universal cover of $B \setminus \Delta$ but in general this will not take the same value on each point covering a given point $p \in B \setminus \Delta$.  Picking a sheet of the covering around $p$ is equivalent to making a choice of local representative for $\phi$.
\end{remark}

In view of this remark we shall define a sheaf twisted so as to ensure $\phi$ is a global section.  Formally, we shall define a sheaf of abelian groups on $B$ an extension by $\Z$ of $\Lambda$, the covariant lattice in the tangent space of $B$:
\[
0 \rightarrow \Z \rightarrow \mathcal{P}_\phi \rightarrow \Lambda \rightarrow 0
\]
To fix this sheaf we first choose an open over $U_i$ of $B_0$ and a representative $\phi_i$ of $\phi$ for each $U_i$:
\begin{definition}
The sheaf $\mathcal{P}$ is defined by taking $\mathcal{P}_\phi = \Lambda|_{U_i} \oplus \Z$ on restriction to each $U_i$.  On the intersection $U_i \cap U_j$ we identify sections via 
\[
(r,m) \sim (r+d(\phi_j-\phi_i)(m),m)
\]
 noting that $\phi_j-\phi_i$ is a linear function and so has a well defined slope which we evaluate in the direction $m$.
\end{definition}

\begin{definition}
An \emph{exponent} at $x \in B_0$ is an element of the stalk $\mathcal{P}_{\phi,x}$.
\end{definition}

\begin{definition}
There is a canonical projection $\mathcal{P}_{\phi,x} \rightarrow \Lambda_x$ for every $x \in B_0$.   Given an exponent $m \in \mathcal{P}_{\phi,x}$ we denote the image of $m$ under this projection by $\bar{m}$. 
\end{definition}

In the case where $B$ has no singularities, the deformations of the central fiber described in this section arise from a toric construction, which we now sketch (see \cite{TropGeom} for details).  The input data for this construction are an affine manifold $B \subset \R^2$, a decomposition $\mathscr{P}$ of $B$ into integral polygons and a convex function $\phi \colon B \rightarrow \R$ which is piecewise linear and linear on the elements of $\mathscr{P}$.  The set $B' = \{(p,x) : x \geq \phi(p)\}$ is a polyhedron, with a well defined normal fan.  The toric variety associated to this normal fan has a projection to $\C$ and the fiber over zero is equal to a reducible collection of toric varieties corresponding to the full-dimensional cells of $\mathscr{P}$.

\begin{example}
We consider a degeneration of $\P^1$: Let $B$ be the union of the intervals $[-1,0]$, $[0,1]$ and consider:
\[
\phi(x) =
\begin{cases} 
0 & x < 0 \\
x & x > 0
\end{cases}
\]
The toric variety associated to $B'$ is the blow up of $\C \times \P^1$ at $(0,\infty)$.  The projection onto the first factor has general fiber $\P^1$ and central fiber equal to the union of $2$ copies of $\P^1$ identified at a toric zero stratum.
\end{example}

\begin{remark} \label{rem:monoid_over_graph}
Observe that in this construction each cell of $\mathscr{P}$ not contained in the boundary of $B$ defines a cone via its tangent wedge in $B'$ which is dual to a cone in the normal fan of $B'$.  A chart of this degeneration is then given by taking the algebra over the monoid defined by the integral points of this tangent wedge.  
\end{remark}

We now localize this toric construction, so that it applies to $(B,\mathscr{P},\phi)$ such that $B$ has singularities.  In particular we shall define the analogue of the \emph{monoid above the graph} from Remark~\ref{rem:monoid_over_graph}.  To state this definition we need two more locally defined objects:
\begin{enumerate}
\item $\Sigma_x$: The fan in $T_xB_0$ induced by $\mathcal{P}$.
\item $\phi_{i,x}$: the piecewise linear function induced by $\phi_i$ on $T_xB_0$.  One may define this by defining its slope in each cell of $\Sigma_x$ to be the slope of $\phi_i$ in the cell of $\mathscr{P}$ that cone corresponds to; see \cite{TropGeom} for more details.
\end{enumerate}

\begin{definition}
Fix an $x \in U_i$.  We define a monoid $P_{\phi, x} \subseteq \mathcal{P}_{\phi, x}$ given by:
\[
P_{\phi, x} = \left\{(r,m) : \text{$m \in |\Sigma_x|$, $r \geq \phi_{i,x}\left(m\right)$} \right\}
\]
\end{definition}

\noindent The fact that $P_{\phi,x}$ is independent of the chart used to define it is proven in \cite{TropGeom}, and a corollary of that calculation is the following observation.

\begin{prop}
The order of an exponent with respect to a maximal dimensional cell $\sigma \in \mathscr{P}$ given by the formula $\ord_{\sigma}\left(p\right) = r - \phi_{i,\sigma}$ is independent of the chart used to define it.
\end{prop}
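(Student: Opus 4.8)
The plan is to reduce the independence of $\ord_\sigma(p)$ to the independence of the monoid $P_{\phi,x}$, which is exactly the calculation cited from \cite{TropGeom}. Fix an exponent $p = (r,m) \in \mathcal{P}_{\phi,x}$ and a maximal cell $\sigma \in \mathscr{P}$ with $x$ a point in the (relative interior of the) cone of $\Sigma_x$ corresponding to $\sigma$. The assertion is that $r - \phi_{i,\sigma}(\bar m)$ does not depend on the chart $U_i$ containing $x$. I would begin by unwinding what ``depends on the chart'' means: if $x \in U_i \cap U_j$, then the identification of $\mathcal{P}_{\phi,x}$ computed via $U_i$ with that computed via $U_j$ is given by the gluing rule $(r,m) \sim (r + d(\phi_j - \phi_i)(m), m)$. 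So a single exponent $p$ has two coordinate presentations, $(r_i, m)$ in the $U_i$-trivialization and $(r_j, m)$ in the $U_j$-trivialization, related by $r_j = r_i + d(\phi_j - \phi_i)(\bar m)$.

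Next I would compute $\phi_{i,\sigma}$ versus $\phi_{j,\sigma}$: by definition $\phi_{i,x}$ is the piecewise linear function on $T_xB_0$ whose slope on the cone corresponding to a cell $\sigma'$ is the slope of the local representative $\phi_i$ there; restricting to the maximal cell $\sigma$ (where everything is genuinely linear), $\phi_{i,\sigma}$ is just the linear functional given by the slope of $\phi_i$ on $\sigma$, and likewise $\phi_{j,\sigma}$ is the slope of $\phi_j$ on $\sigma$. Since $\phi_j - \phi_i$ is an (integral) affine-linear function on $U_i \cap U_j$ with a well-defined slope $d(\phi_j - \phi_i)$, we get, evaluated in the direction $\bar m$,
\[
\phi_{j,\sigma}(\bar m) - \phi_{i,\sigma}(\bar m) = d(\phi_j - \phi_i)(\bar m).
\]
Combining this with the transformation $r_j = r_i + d(\phi_j - \phi_i)(\bar m)$ from the previous paragraph gives
\[
r_j - \phi_{j,\sigma}(\bar m) = \bigl(r_i + d(\phi_j-\phi_i)(\bar m)\bigr) - \bigl(\phi_{i,\sigma}(\bar m) + d(\phi_j-\phi_i)(\bar m)\bigr) = r_i - \phi_{i,\sigma}(\bar m),
\]
so the two expressions agree. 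This is really the same bookkeeping that makes $P_{\phi,x}$ chart-independent — indeed $\ord_\sigma(p) \geq 0$ precisely detects membership in $P_{\phi,x}$ localized at the cone of $\sigma$ — so I would either invoke that computation directly or, for self-containedness, present the short two-line comparison above. I would also note that $\phi$ being \emph{multi-valued} does not cause trouble here: the difference $\phi_j - \phi_i$ on an overlap $U_i \cap U_j$ is single-valued with an honest slope, which is all that is used.

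The only genuine subtlety — and thus the ``main obstacle'' such as it is — is making sure the comparison of the piecewise-linear induced functions $\phi_{i,x}$ and $\phi_{j,x}$ on $T_xB_0$ is carried out against the \emph{same} fan $\Sigma_x$ and that the identification of tangent spaces used when passing between charts is the affine-linear one, so that ``the slope of $\phi_i$ on the cell corresponding to $\sigma$'' is transported correctly; once one is careful that the fan $\Sigma_x$ and the cone corresponding to $\sigma$ are intrinsic (they come from $\mathcal{P}$, not from a chart), the rest is the linear algebra above. I expect the proof in the paper to simply cite the parallel fact for $P_{\phi,x}$ from \cite{TropGeom} and observe that $\ord_\sigma$ is read off from the same data; I would follow that route, spelling out the one-line transformation rule as confirmation.
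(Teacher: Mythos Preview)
Your proposal is correct and matches the paper's approach exactly: the paper gives no proof at all, simply presenting the proposition as ``a corollary of that calculation'' (the chart-independence of $P_{\phi,x}$ from \cite{TropGeom}), which is precisely what you anticipated. Your explicit two-line verification that $r_j - \phi_{j,\sigma}(\bar m) = r_i - \phi_{i,\sigma}(\bar m)$ via the gluing rule $(r,m)\sim(r+d(\phi_j-\phi_i)(m),m)$ is the content of that cited calculation, so you have in fact supplied more detail than the paper does.
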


\noindent In words this definition is simply:
`\emph{The order of $m$ is its height above the hyperplane in $\mathcal{P}_{\phi,x}$ defined by $\sigma$}'. Thus we may extend the definition slightly:

\begin{definition}
For $\tau \in \mathscr{P}$ and $m \in |\Sigma|$, 
$\ord_{\tau}\left(m\right) = \max_{\tau \subseteq \sigma} \ord_{\sigma}\left(m\right)$ and
$\ord \left(m\right) = \max_\sigma \ord_{\sigma}\left(m\right)$.
\end{definition}

\subsection{Slabs and rays on $B$}

Structures on $B$ consist of a collection of \emph{slabs} and \emph{rays}.  We shall now define rays; these carry the instanton corrections analogous to gradient flow lines in \cite{KoSo}.  We recall this definition from \cite{TropGeom}.

\begin{definition}
A \emph{naked ray} (Definition 6.16 of \cite{TropGeom}) is an immersion $\d \colon \left[0,L_\d\right] \rightarrow B$ such that:
\begin{itemize}
\item whenever $\d(x)$ is non-singular, $D\d_x$ maps the integral tangent vectors to $x$ to $\Lambda_{\d(x)}$;
\item the image of $\d$ only intersects singular points in their monodromy invariant direction;
\item if $L_\d$ is finite then $\d\left(L_\d\right)$ is in $\partial B$.
\end{itemize}
A \emph{ray} is a pair $\left(\d,f_{\d}\right)$ where $\d$ is a naked ray, $f_\d = 1+c_mz^m$, and $m \in \Gamma\left( I_\d , \d^{-1}\mathcal{P}_\phi \right)$ is such that every germ $m_x$ of $m$ lies in $P_{\phi,\d\left(x\right)}$ 
\end{definition}

A crucial property of rays is that  the order of an exponent \emph{increases} as one moves from one cell of $\mathscr{P}$ to another; this follows from the strict convexity of the piecewise linear function $\phi$:

\begin{lem}
Consider a ray $\left(\d, f_{\d}\right)$ and the section $m$ giving the exponent of the ray function $f_\d$.  If $m_x \in P_{\phi, x}$ then for $x' > x, m_x' \in P_{\phi, x'}$.
\end{lem}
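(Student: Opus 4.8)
The plan is to reduce the statement to the local picture at a point of $\Delta$ and then invoke the convexity of $\phi$ exactly as in the smooth toric construction. First I would note that the claim is local: it suffices to show that whenever the naked ray $\d$ passes from the interior of a maximal cell $\sigma$ into an adjacent maximal cell $\sigma'$, across a codimension-one cell $\rho$, the germ $m_x$ — which lies in $P_{\phi,\d(x)}$ by hypothesis — continues to lie in $P_{\phi,\d(x')}$ for $x'$ just past the crossing. Away from $\Delta$ the sheaf $\mathcal{P}_\phi$ is locally constant along $\d$ (parallel transport of the covariant lattice), so the only content is what happens to the defining inequality $r \ge \phi_{i,x}(\bar m)$ as the reference cell changes from $\sigma$ to $\sigma'$.

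The key computation is then the change-of-cell formula. Fixing a local representative $\phi_i$ on a chart containing the crossing point, the piecewise linear functions $\phi_{i,\sigma}$ and $\phi_{i,\sigma'}$ agree on $\rho$ and differ by a linear function vanishing on $\Lambda_{\rho,\R}$; strict convexity of $\phi$ (condition (3) in the standing hypotheses of Section~\ref{sec:structures}, that $\phi$ is strictly convex along $\mathscr{P}$) says that moving from $\sigma$ to the side of $\rho$ containing $\sigma'$ \emph{decreases} the value of the linear extension of $\phi_{i,\sigma}$ relative to $\phi_{i,\sigma'}$ on $\bar m$, i.e. $\phi_{i,\sigma'}(\bar m) \le \phi_{i,\sigma}(\bar m)$ for $\bar m$ on the $\sigma'$ side of the wall. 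Hence $r \ge \phi_{i,\sigma}(\bar m) \ge \phi_{i,\sigma'}(\bar m)$, and since by construction $\bar m \in |\Sigma_{\d(x')}|$ as well (the naked-ray condition forces $\d$ to meet singular points only in the monodromy-invariant direction, so no monodromy obstructs the continuation of the exponent), we conclude $m_{x'} \in P_{\phi,\d(x')}$. One also must check the degenerate case where $\d(x')$ itself lies on a lower-dimensional cell of $\mathscr{P}$ rather than in a maximal cell, but this follows by taking the maximum over the maximal cells containing that cell, using $\ord_\tau(m) = \max_{\tau\subseteq\sigma}\ord_\sigma(m)$.

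The main obstacle, and the only place care is genuinely needed, is bookkeeping the effect of monodromy when $\d$ passes near a singularity of type $\kappa$: a priori the identification of $\mathcal{P}_\phi$ on the two sides is twisted, so one must verify that the germ $m_x$, which is required to lie in the monodromy-invariant direction, is carried to a germ still lying in the relevant monoid. This is handled by recalling that $\mathcal{P}_\phi$ is the extension of $\Lambda$ by $\Z$ determined by $\phi$, that the monodromy transformation $S_\kappa$ fixes the tangent direction of $\d$, and that $\phi$ — being defined as a (multivalued) function on the universal cover — is compatible with this transformation; thus the inequality defining $P_{\phi,\cdot}$ is preserved. Once this is in place, the proof is a one-line appeal to convexity. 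I expect the author's proof to be essentially this, quoting the corresponding statement (Definition/Lemma) from Chapter 6 of \cite{TropGeom}.
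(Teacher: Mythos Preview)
Your proposal is correct in substance and, exactly as you anticipate, the paper's proof is a one-line citation of Lemma~6.19 in \cite{TropGeom}; your sketch simply unpacks what that lemma says.

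One point to tidy: your convexity sentence is internally inconsistent. You first (correctly) say that the linear extension $\phi_{i,\sigma}$ falls below $\phi_{i,\sigma'}$ on the $\sigma'$ side of $\rho$, but then restate this as $\phi_{i,\sigma'}(\bar m)\le\phi_{i,\sigma}(\bar m)$, which is the opposite inequality. The resolution is that the exponent $\bar m$ in fact lies on the $\sigma$ side of $\rho$ (the ray propagates away from the direction of $\bar m$ in the conventions inherited from \cite{TropGeom}, so when $\d$ passes from $\sigma$ into $\sigma'$ the vector $\bar m$ points back into $\sigma$); convexity then gives $\phi_{i,\sigma'}(\bar m)\le\phi_{i,\sigma}(\bar m)$ for the right reason, and your chain $r\ge\phi_{i,\sigma}(\bar m)\ge\phi_{i,\sigma'}(\bar m)$ goes through. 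This is a bookkeeping slip rather than a gap in the argument.
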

\begin{proof}
This is an immediate consequence of Lemma 6.19 in \cite{TropGeom}.
\end{proof}

\begin{remark}
This Lemma implies that given an integer $k$, the set
\[
\{ x \in [0,L_\d] : \ord_x\left(m\right) \leq k\}
\]
is an interval of the form $\left[0,N^k_\d\right]$; this defines the numbers $N^k_\d$ for each pair $\left(\d,k\right)$.  In particular we can define the truncation of a ray at a given order: \end{remark}

\begin{definition}
A $k$-truncated ray is a ray $\left(\d, f_{\d}\right)$ restricted to the domain $[0,N^k_{\d}]$.
\end{definition}

We now encode the log structure in the structure on $B$.  To do this we use a simplified version of the definition of a slab from \cite{GS1}.  We shall require the following preliminary observation:

\begin{lem}
Given a codimension one cell $\rho$ in $\mathscr{P}$ and a section $f_\rho \in \Gamma\left(V_\rho, \mathcal{O}\left(k\right)\right)$ defining the log structure along this stratum there is a canonical lift, which we also denote $f_\rho$, to a section of $k\left[P_{\phi,v}\right]$ for any vertex $v \in \rho$.
\end{lem}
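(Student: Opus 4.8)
The plan is to produce the lift by pulling the section $f_\rho$ through the chart descriptions used in Definition~\ref{def:pre_log} and reading off what happens under the natural map $k[\Lambda_v] \to k[P_{\phi,v}]$. First I would recall that, near the vertex $v$, the toric variety $V_\rho$ is cut out inside $V(v)$ by the torus-invariant curve corresponding to the ray of $\Sigma_v$ spanned by $\rho$, so that the coordinate ring of the chart $U_i = V(v_i) \cap V_\rho$ from Definition~\ref{def:pre_log} is a localization of a quotient of $k[\Lambda_v]$ (after choosing a splitting, a monoid algebra on the lattice $\Lambda_v / \Lambda_{\rho}$ together with the monomial along $\rho$). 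Thus $f_\rho$, which by hypothesis is a section of $\mathcal{O}_{V_\rho}(k) = \mathcal{N}_\rho$ trivialized over $U_i$ by $f_{\rho,v_i}$, is an honest element of this monoid algebra; writing $f_{\rho,v}$ in the chart at $v$ gives a Laurent polynomial in the variable dual to $m^\rho_{v_1,v_2}$ with coefficients that are monomials (or units) in the transverse directions.

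Next I would exhibit the lift explicitly. The monoid $P_{\phi,v} \subseteq \mathcal{P}_{\phi,v}$ sits above the fan $\Sigma_v$ via the height function $\phi_{i,v}$, and there is a canonical monoid homomorphism $P_{\phi,v} \to \Lambda_v \cap |\Sigma_v|$ given by projection, dual to the inclusion of the central fiber in the total degeneration (Remark~\ref{rem:monoid_over_graph}). Pulling back along this projection turns a monomial $z^{\bar m}$ supported on the ray $\rho$ into the monomial $z^{(\phi_{i,v}(\bar m),\bar m)}$ in $k[P_{\phi,v}]$; doing this term-by-term to the expression for $f_{\rho,v}$ produces the desired element of $k[P_{\phi,v}]$, which we again call $f_\rho$. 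Because $\phi$ is linear on $\rho$, this map is multiplicative along $\rho$, so no ambiguity arises in how the monomials combine, and the leading $1$ in $f_\rho = 1 + (\text{higher order})$ lifts to the monoid unit. I would then check that the lift is independent of which vertex $v \in \rho$ we use: passing from $v_1$ to $v_2$ the trivializations differ by $z^{k m^\rho_{v_1,v_2}}$ on the base (Definition~\ref{def:pre_log}), and this is matched upstairs by the corresponding change-of-vertex isomorphism $k[P_{\phi,v_1}] \cong k[P_{\phi,v_2}]$ coming from the splitting of $\mathcal{P}_\phi$, so the two lifts agree. Independence of the chart $U_i$ used to describe $\phi_{i,v}$ follows from the already-cited fact (proved in \cite{TropGeom}) that $P_{\phi,v}$ itself does not depend on this choice.

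The main obstacle I expect is bookkeeping rather than anything deep: one must be careful that the monomials appearing in $f_\rho$, which a priori only involve the one-dimensional lattice along $\rho$, actually lie in $|\Sigma_v|$ and have nonnegative height, i.e.\ genuinely land in the monoid $P_{\phi,v}$ and not merely in the group $\mathcal{P}_{\phi,v}$; this is where the hypothesis that $f_\rho$ is a section of a line bundle (so its monomial support is an honest interval of lattice points on $\rho$, not a Laurent tail escaping the cone) is used, together with linearity of $\phi$ on $\rho$ to pin down the height exactly. A secondary point to be careful about is the weighted/orbifold case, where $V_\rho \cong \P(a,b)$ and $\mathcal{N}_\rho = \mathcal{O}(k\,\lcm(a,b))$: there the change-of-vertex monomial is $z^{k\,\lcm(a,b)\,m^\rho_{v_1,v_2}/(\cdots)}$ and one should confirm the same compatibility, but this is formally identical once the lattices are set up correctly. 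Everything else is a direct unwinding of the definitions.
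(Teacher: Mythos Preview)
Your proposal is correct and takes essentially the same approach as the paper: restrict $f_\rho$ to the chart at $v$ to get a polynomial in $z^m$ with $m$ the primitive generator along $\rho$, then lift each monomial via the section $\bar m \mapsto (\phi_{i,v}(\bar m),\bar m)$ of the projection $\mathcal{P}_{\phi,v}\to\Lambda_v$. The paper's proof is much terser and does not verify the vertex- and chart-independence you check, nor does it discuss the orbifold case; your added bookkeeping is sound but not required for the statement as phrased (which only asserts a lift for each fixed $v$).
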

\begin{proof}
The function $f_\rho|_{V\left(v\right)}$ is a polynomial function in $z^m$ where $m$ is the primitive generator of the tangent space to $\rho$.  Therefore $f_\rho|_{V\left(v\right)}$ is canonically an element of the ring $k\left[\Lambda_v\right]$.  We take $f_{\rho,v}$ to be the canonical lift to $\mathcal{P}_{\phi,v}$, obtained from the observation that $\phi$ gives a section of the projection $\mathcal{P}_{\phi, v} \rightarrow \Lambda_v$.  Notice that with respect to $\rho$ the order of the slab function is always zero.
\end{proof}

\begin{definition}\label{def:slab}
A \emph{slab} consists of a codimension one cell $\rho$ together with, for each non-singular point $x \in \rho$, a germ
\[
f_{\rho, x} = \sum_{m \in P_x, \bar{m} \in \Lambda_{\rho}}{c_mz^m} \in k\left[P_x\right]
\]
such that the following two conditions hold:
\begin{enumerate}
\item \emph{Change of vertex formula}: Take $x$ and $x'$ and denote the corresponding connected components of $\rho\setminus \Delta$ by $C_x$ and $C_{x'}$ respectively.   Let $k$ be the number of singularities (counted with multiplicity) between $x$ and $x'$, and define $m^\rho_{x,x'} \in \Lambda_x$ to be the $k$-fold dilate of the primitive generator of the ray from $x$ to $x'$.  Now we generalise the change of vertex formula of \cite{GS1} to give the relation between the slab functions in different connected components:
\[
f_{\rho,x'} = z^{m^\rho_{x,x'}}f_{\rho, x}
\]
\item \emph{Agreement with log structure}: If $x \in C_v$ for some vertex $v \in \rho$, we have at $v$ a function from the log structure: $f_\rho|_{V\left(v\right)}$.  There is a canonical parallel transport map to the point $x$ and we demand that, after parallel transport, we have $f_{\rho,x} = f_\rho|_{V\left(v\right)}$.
\end{enumerate}
\end{definition}

\begin{remark}
This definition of slab function relies on Proposition~\ref{prop:no_glue}.  Indeed the change of component formula in \cite{GS1} is considerably more complicated and it is not clear what the correct general definition is in cases which are not locally rigid.  
\end{remark}

\begin{remark}
In \cite{GS1} the authors ask only that the order zero part of the slab function agrees with the log structure; in \cite{TropGeom} however \emph{all} the corrections are carried by rays. Interpolating between these two, we shall regard slabs simply as placeholders for the log structure.
\end{remark}

\subsection{Defining a structure on $(B,\mathscr{P},\phi)$}

\begin{definition}
A structure $\mathscr{S} = \mathscr{S}^s \cup \mathscr{S}^r$ is a finite collection $\mathscr{S}^s$ of slabs and a possibly infinite collection $\mathscr{S}^r$ of rays such that:
\begin{enumerate}
\item The order of any exponent on any ray is strictly positive.
\item The set 
\[
\mathscr{S}_k^r = \left\{ \text{$k$-truncated rays $\left(\d,f_{\d}\right)$} : N^k_{\d} > 0 \right\}
\]
is finite for each $k$.
\end{enumerate}
\end{definition}

Given a structure $\mathscr{S}$ and a non-negative integer~$k$, we fix a polyhedral refinement $\mathscr{P}_k$ of $\mathscr{P}$ such that:
\begin{enumerate}
\item The cells of $\mathscr{P}_k$ are rational convex polyhedra.
\item For each $\d \in \mathscr{S}^r_k$, the set $\d\left(\left[0,N^k_{\d}\right]\right)$ is a union of cells in $\mathscr{P}_k$.
\end{enumerate}

We now define a category $\underline{\text{Glue}}(\mathscr{S},k)$ and a functor to the category of commutative rings which will record each of the local pieces of the smoothing.  This allows the problem of reconstructing the smoothing to be broken into two distinct problems: establishing functoriality, and then showing that the colimit of this functor produces a smoothing.

\subsubsection{The objects}

Let $\left(\omega,\tau,\mathfrak{u}\right)$ be a triple such that:
\begin{enumerate}
\item $\omega, \tau \in \mathscr{P}$ and a maximal cell $\mathfrak{u}$ of $\mathscr{P}_k$
\item $\omega \subseteq \tau$
\item $\omega \cap \mathfrak{u} \neq \varnothing$
\item $\tau \subseteq \sigma_\mathfrak{u}$, where $\sigma_\mathfrak{u}$ is the maximal cell of $\mathscr{P}$ containing $\mathfrak{u}$
\end{enumerate}

\begin{remark}
Each of these will be used to define a small subscheme of the formally degenerating family by considering a certain thickening of the stratum corresponding to $\tau$ inside a formal smoothing of $\Star(\omega)$.
\end{remark}

\subsubsection{The morphisms}

The space of morphisms between any two objects $\left(\omega, \tau, \mathfrak{u}\right),\left(\omega', \tau', \mathfrak{u}'\right)$ has at most one element.  It has one element precisely when $\omega \subseteq \omega'$, $\tau' \subseteq \tau$.  We shall use the following basic observation about the morphisms of this category:

\begin{lem}
Any morphism may be factored into morphisms of one of two types:
\begin{enumerate}
\item  $\omega \subseteq \omega'$, $\tau' \subseteq \tau$, $\mathfrak{u} = \mathfrak{u}'$.
\item  $\omega = \omega'$, $\tau' = \tau$, $\mathfrak{u}\cap\mathfrak{u}'$ is a one dimensional set containing $\omega$.
\end{enumerate}
\end{lem}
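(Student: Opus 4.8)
The statement is a combinatorial fact about the poset-like category $\underline{\text{Glue}}(\mathscr{S},k)$: every morphism can be factored as a composite of morphisms of the two listed types. The plan is to argue directly from the definitions, decomposing an arbitrary morphism in stages by first adjusting $\mathfrak{u}$ and then adjusting the pair $(\omega,\tau)$, or vice versa. Recall that a morphism $(\omega,\tau,\mathfrak{u}) \to (\omega',\tau',\mathfrak{u}')$ exists precisely when $\omega \subseteq \omega'$ and $\tau' \subseteq \tau$; there is no explicit constraint relating $\mathfrak{u}$ and $\mathfrak{u}'$ beyond the object axioms, which force $\omega' \cap \mathfrak{u}' \neq \varnothing$, $\tau' \subseteq \sigma_{\mathfrak{u}'}$, and similarly for the source. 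So the essential content is: given such a morphism, interpolate it through intermediate objects.

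\textbf{Step 1: reduce to the case where only $\mathfrak{u}$ changes.} First I would handle the passage $\mathfrak{u} \to \mathfrak{u}'$ while keeping $(\omega,\tau)$ fixed. Since $\mathfrak{u}$ and $\mathfrak{u}'$ are both maximal cells of the refinement $\mathscr{P}_k$, and since $\mathscr{P}_k$ is a polyhedral decomposition of $B$, any two maximal cells can be connected by a chain $\mathfrak{u} = \mathfrak{u}_0, \mathfrak{u}_1, \dots, \mathfrak{u}_N = \mathfrak{u}'$ in which consecutive cells meet along a common facet (a one-dimensional set, since $B$ is two-dimensional). The subtlety is that we need each $\mathfrak{u}_j$ to still form a valid object with $(\omega,\tau)$, i.e.\ $\omega \cap \mathfrak{u}_j \neq \varnothing$ and $\tau \subseteq \sigma_{\mathfrak{u}_j}$, and we need $\mathfrak{u}_{j-1}\cap\mathfrak{u}_j$ to contain $\omega$. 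This forces us to choose the chain to lie within $\Star(\omega)$ and within the maximal cell $\sigma$ of $\mathscr{P}$ containing $\tau$; such a chain exists because the maximal cells of $\mathscr{P}_k$ refining $\sigma$ and incident to $\omega$ form a connected fan-like configuration around $\omega$. This gives the factorization of the $\mathfrak{u}$-move into type~(2) morphisms.

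\textbf{Step 2: handle the pair $(\omega,\tau) \to (\omega',\tau')$ with $\mathfrak{u}$ fixed.} This is precisely a type~(1) morphism, so there is nothing to factor further here — but I should check that such a morphism exists as an object-level statement once $\mathfrak{u}$ has been moved appropriately, i.e.\ that $\mathfrak{u}'$ works with $(\omega,\tau)$ as well as with $(\omega',\tau')$ under the hypotheses. Combining Steps 1 and 2: given the original morphism, first move $\mathfrak{u} \to \mathfrak{u}'$ keeping $(\omega,\tau)$ (Step 1, type (2) pieces), landing at $(\omega,\tau,\mathfrak{u}')$, then apply the single type~(1) morphism $(\omega,\tau,\mathfrak{u}') \to (\omega',\tau',\mathfrak{u}')$. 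One must verify $(\omega,\tau,\mathfrak{u}')$ is a legitimate object: $\omega \cap \mathfrak{u}' \neq \varnothing$ follows because $\omega \subseteq \omega'$ and $\omega'$ meets $\mathfrak{u}'$ — wait, that inclusion goes the wrong way, so this is exactly where care is needed; instead one uses that $\omega \subseteq \tau \subseteq \sigma_{\mathfrak{u}'}$ and that $\Star(\omega)$ relative to $\mathscr{P}_k$ covers a neighbourhood of $\omega$, hence contains cells arbitrarily near $\mathfrak{u}'$, so the chain in Step 1 can be started from a cell meeting $\omega$.

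\textbf{Main obstacle.} The only real difficulty is the connectivity/combinatorial bookkeeping in Step 1: ensuring that the chain of adjacent maximal cells of $\mathscr{P}_k$ can be chosen so that \emph{every} intermediate cell is a legal object with the fixed pair $(\omega,\tau)$ and so that consecutive intersections contain $\omega$. This amounts to showing that the set of maximal cells of $\mathscr{P}_k$ that (a) refine the fixed maximal cell $\sigma \in \mathscr{P}$ and (b) are incident to $\omega$ is connected in the dual-adjacency graph, with all adjacencies through faces containing $\omega$. Since $B$ is a surface and $\mathscr{P}_k$ is a locally finite polyhedral refinement, $\Star(\omega)$ in $\mathscr{P}_k$ is a fan (or half-fan, if $\omega \subseteq \partial B$) and this connectivity is immediate; the verbal argument is short but I would want to state it cleanly. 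Everything else is a direct unwinding of the definition of the morphism sets, and I would present it in one or two short paragraphs.
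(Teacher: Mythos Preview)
The paper does not actually prove this lemma; it is stated as a ``basic observation'' and left to the reader. Your overall strategy---factor an arbitrary morphism into a strata-change (type~1) and a chain of adjacent chamber-changes (type~2)---is the natural one and is surely what the author has in mind. However, the order you chose introduces a genuine gap.

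You propose to first move $\mathfrak{u}$ to $\mathfrak{u}'$ by a chain of type~(2) morphisms while keeping $(\omega,\tau)$ fixed, landing at the intermediate triple $(\omega,\tau,\mathfrak{u}')$, and then apply a single type~(1) morphism. But $(\omega,\tau,\mathfrak{u}')$ need not be an object of $\underline{\text{Glue}}(\mathscr{S},k)$: the axiom $\tau \subseteq \sigma_{\mathfrak{u}'}$ can fail. From the target you only know $\tau' \subseteq \sigma_{\mathfrak{u}'}$, and since the inclusion is $\tau' \subseteq \tau$ (not the reverse), this gives no control on $\tau$. For instance, if $\tau$ is a maximal cell $\sigma_1$ of $\mathscr{P}$, $\tau'$ is a vertex of $\sigma_1$, and $\mathfrak{u}'$ lies in an adjacent maximal cell $\sigma_2$, then $\sigma_{\mathfrak{u}'} = \sigma_2 \not\supseteq \tau$. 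You notice a difficulty at exactly this point, but your attempted recovery---asserting ``$\omega \subseteq \tau \subseteq \sigma_{\mathfrak{u}'}$''---simply assumes the very inclusion that is at issue.

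The fix is to reverse the order. First apply the type~(1) morphism $(\omega,\tau,\mathfrak{u}) \to (\omega',\tau',\mathfrak{u})$; the intermediate $(\omega',\tau',\mathfrak{u})$ \emph{is} a valid object, since $\omega' \cap \mathfrak{u} \supseteq \omega \cap \mathfrak{u} \neq \varnothing$ (using $\omega \subseteq \omega'$) and $\tau' \subseteq \tau \subseteq \sigma_{\mathfrak{u}}$. Then run your chamber-chain argument with $(\omega',\tau')$ fixed to pass from $\mathfrak{u}$ to $\mathfrak{u}'$ by type~(2) moves. With this order the connectivity argument in your ``Main obstacle'' paragraph goes through cleanly: both endpoints $\mathfrak{u}$ and $\mathfrak{u}'$ now meet $\omega'$ and lie in maximal cells of $\mathscr{P}$ containing $\tau'$, and the chambers of $\mathscr{P}_k$ with these two properties form a connected fan around $\omega'$.
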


\noindent Note that this factorisation is generally non-unique.

\subsection{The gluing functor}

We now define the functor $F_k$ from $\textrm{\underline{Glue}}\left(\mathscr{S},k\right)$ to \underline{Rings} from which we shall construct the $k$th-order formal degeneration. The definition of this functor is virtually identical to that of \cite{TropGeom}.

Having fixed an object $\left(\omega, \tau, \mathfrak{u}\right)$ of $\textrm{\underline{Glue}}\left(\mathscr{S},k\right)$, we shall use the notation $\sigma$ for the maximal cell in $\mathscr{P}$ containing $\mathfrak{u}$.  We shall denote the ring $F_k(\omega, \tau, \mathfrak{u})$ by $R^k_{\omega, \tau, \mathfrak{u}}$;  $\Spec R^k_{\omega, \tau, \mathfrak{u}}$ is a thickening of the toric stratum corresponding to $\tau$. We give the definition of these rings in three stages.

\subsubsection{Defining $P_{\phi, \omega}$}

Recall the monoid $P_{\phi, x}$ for $x \in \textrm{Int}\left(\omega\right)$.  If we pick a $y \in \sigma$ then since the interior of a cell in $\mathscr{P}_{max}$ is simply connected there is a well-defined inclusion $j\colon P_{\phi, x} \hookrightarrow \mathcal{P}_{\phi, y}$ via parallel transport.

\begin{definition}
$P_{\phi, \omega} = j\left(P_{\phi, x}\right) \subseteq \mathcal{P}_{\phi, y}$.
\end{definition}

\subsubsection{Defining the ideal $I^k_{\omega, \tau, \sigma}$}

The thickening of the stratum is defined by an ideal,
$I^k_{\omega, \tau, \sigma} = \left\{ m \in P_{\phi, \omega} : \ord_{\tau}\left(m\right) > k \right\}$. We set $R^k_{\omega \tau \sigma} = k\left[P_{\phi, \omega}\right]/I^k_{\omega \tau \sigma}$.

\subsubsection{Localisation}

This is not yet a good enough  definition of $F_k(\omega, \tau, \sigma)$ however.  The change of vertex formula in the definition of slab demands that certain functions (which have zeroes on the toric 1-strata) should be invertible in these rings, therefore we need to localise with respect to these functions.  This is broken into cases, depending on the strata $\omega$,~$\tau$.

First assume that $\tau$ is an edge with non-trivial intersection with $\Delta$.  In this case we have a slab function attached to each smooth point of $\tau$, and we form the localisation:

\begin{definition}
$R^k_{\omega \tau \mathfrak{u}} = \left(R^k_{\omega \tau \sigma}\right)_{f_{\tau}}$
\end{definition}

Precisely, we need to specify what $f_{\tau}$ means here.  If $\omega = \tau$ it is irrelevant, the slab function is a polynomial in a single variable which is invertible in this ring.  If $\omega$ is a vertex we simply take the germ of the slab function at this point.

In all other cases, namely $\tau \cap \Delta = \varnothing$, we define:

\begin{definition}
$R^k_{\omega \tau \mathfrak{u}} = R^k_{\omega \tau \sigma}$ 
\end{definition}

\noindent We are now able to define the functor $F_k$ on objects: \[F_k\left(\omega,\tau,\mathfrak{u}\right) = R^k_{\omega,\tau,\mathfrak{u}}\]

\begin{remark}
We observe there are some canonical maps between various of these rings.  If $\tau' \subseteq \tau$ and $\omega \subseteq \omega'$ there is a canonical inclusion
$ I^k_{\omega,\tau,\sigma} \hookrightarrow I^k_{\omega,\tau',\sigma} $ and thus a surjection $R^k_{\omega,\tau,\sigma} \rightarrow R^k_{\omega,\tau',\sigma}$.  There is also an inclusion of monoids $P_{\phi, \omega, \sigma} \hookrightarrow P_{\phi,\omega',\sigma}$ and thus an injection $R^k_{\omega,\tau,\sigma} \hookrightarrow R^k_{\omega',\tau,\sigma}$.  One may check that these maps survive the localisations at the slab functions.
\end{remark}

Now we have defined the functor on objects we define the functor on morphisms.  This is done case by case, recalling that any morphism may be factored into those of \emph{change of strata} type and those of \emph{change of chamber} type.

\subsubsection{Change of strata}

We specify a map:
\[
R^k_{\omega,\tau,\sigma} \hookrightarrow R^k_{\omega' ,\tau',\sigma}
\]
by composing the canonical maps we identified in the previous section, precisely, we define the change of strata map:
\[
\psi_{(\omega,\tau),(\omega',\tau')} \colon R^k_{\omega, \tau, \mathfrak{u}} \rightarrow R^k_{\omega, \tau', \mathfrak{u}} \hookrightarrow R^k_{\omega', \tau', \mathfrak{u}}
\]
to be the composition of the two maps above. See \cite{TropGeom} for the verification that these are defined in the localised rings.

\subsubsection{Change of chamber maps}

Now we fix two chambers $\mathfrak{u}$,~$\mathfrak{u}'$ with one dimensional intersection and such that $\omega \cap \mathfrak{u} \cap \mathfrak{u}' \neq \varnothing$.  We also fix a point $y \in \textrm{Int}\left(\mathfrak{u} \cap \mathfrak{u}'\right)$ such that the connected component of $B_0 \cap \mathfrak{u} \cap \mathfrak{u}'$ (recalling $B_0 := B \backslash \Delta$) containing $y$ intersects $\omega$.  Note that either $\omega$ is a vertex, in which case there is a unique such component, or $\omega$ is an edge, in which case any connected component will do.  We shall now define the change of chamber map $\theta_{\mathfrak{u}, \mathfrak{u}'}\colon R^k_{\omega, \tau, \mathfrak{u}} \rightarrow R^k_{\omega, \tau, \mathfrak{u}'}$.

We consider two further cases, depending on whether or not $\sigma_\mathfrak{u} \cap \sigma_{\mathfrak{u}'} \cap \Delta = \varnothing$.  If this is the case we define:
\[
\theta_{\mathfrak{u}, \mathfrak{u}',y}\left(z^m\right) = z^m \prod{f^{\left\langle n,\bar{m}\right\rangle}_{\left(\d,x\right)}}
\]
Note that this is always an isomorphism -- all the functions $f_{\left(\d,x\right)}$ are invertible.  As rays propagate in the direction of $\bar{m}$ this is manifestly independent of the point $y$.  If $\sigma_\mathfrak{u} \cap \sigma_{\mathfrak{u}'} \cap \Delta = \varnothing$, we shall define the map as follows:
\[
\theta_{\mathfrak{u}, \mathfrak{u}',y}\left(z^m\right) = z^m f^{\left\langle n,\bar{m}\right\rangle}_{\rho,y}\prod{f^{\left\langle n,\bar{m}\right\rangle}_{\left(\d,x\right)}}
\]
\begin{remark}
Notice that $z^m$ in the left hand side is an element of $R^k_{\omega, \tau, \mathfrak{u}}$ whereas on the right it appears as an element of  $R^k_{\omega, \tau, \mathfrak{u}'}$.  The identification of these two rings is made via parallel transport along a `short path' from $\mathfrak{u}$ to $\mathfrak{u'}$ which is contained in the union of these two chambers and which intersects the 1-cell between them only once.
\end{remark}

\noindent Since $R^k_{\omega, \tau, \mathfrak{u}}$ is localised at the slab functions we see that all functions appearing in the product are invertible, and so this map is an automorphism.  However, the above definition is not manifestly independent of $y$.

\begin{prop}
$\theta_{\mathfrak{u}, \mathfrak{u}',y}$ is independent of the choice of $y$.
\end{prop}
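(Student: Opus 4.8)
The plan is to show that the two a priori different definitions of $\theta_{\mathfrak{u},\mathfrak{u}',y}$ obtained from two admissible choices $y$, $y'$ of the basepoint in $\mathrm{Int}(\mathfrak{u}\cap\mathfrak{u}')$ actually agree as ring automorphisms of $R^k_{\omega,\tau,\mathfrak{u}'}$. The only point at which $y$ enters the formula is in selecting which germs of rays, and which germ of the slab function $f_{\rho,y}$, appear in the product; so the task is to compare the collection of ray germs crossing near $y$ with those crossing near $y'$, together with the two germs of the slab function. First I would reduce to the case where $y$ and $y'$ lie in the same connected component of $\mathfrak{u}\cap\mathfrak{u}'$ minus the finitely many points where rays of $\mathscr{S}^r_k$ meet the $1$-cell $\rho$ between $\mathfrak{u}$ and $\mathfrak{u}'$; the general case then follows by composing finitely many such elementary moves. (Recall that $\mathscr{S}^r_k$ is finite, so only finitely many ray endpoints occur on $\rho$, and by the legality hypotheses of Remark~\ref{rem:fibs} none of them is a singular point.)

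Next I would treat an elementary move, i.e. sliding the basepoint past exactly one point $p$ of $\rho$ at which rays terminate. By the definition of a structure together with Remark~\ref{rem:monoid_over_graph} and the change-of-chamber formulas, the relevant invariant is the product $\prod f_{(\mathfrak{d},x)}^{\langle n,\bar m\rangle}$ taken over all rays whose truncation reaches $\rho$ near the basepoint, with $n$ the primitive conormal to $\rho$ chosen consistently. The key input is the \emph{consistency} of the structure $\mathscr{S}$ at order $k$ in the sense of \cite{TropGeom}, Chapter~6: around the point $p$ the composition of wall-crossing automorphisms associated to all slabs and rays meeting $p$ is the identity (modulo the ideal $I^k$). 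For our situation $p$ lies on the interior of the $1$-cell $\rho$, so the local picture around $p$ consists of the slab $\rho$ itself together with the rays ending at $p$; consistency at $p$ therefore says precisely that the slab-function germ jumps across $p$ by exactly the product of the ray contributions, which is the content of the change-of-vertex formula in Definition~\ref{def:slab} combined with the ray functions $f_\mathfrak{d}=1+c_m z^m$. Feeding this identity into the two expressions $\theta_{\mathfrak{u},\mathfrak{u}',y}$ and $\theta_{\mathfrak{u},\mathfrak{u}',y'}$ shows that the discrepancy between the two basepoints is absorbed exactly by the change in the germ $f_{\rho,y}$ versus $f_{\rho,y'}$, so the two ring maps coincide. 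In the case $\sigma_\mathfrak{u}\cap\sigma_{\mathfrak{u}'}\cap\Delta=\varnothing$ there is no slab contribution and the product over rays is itself basepoint-independent (rays propagate in the direction $\bar m$, as already noted), so there is nothing to prove beyond the bookkeeping.

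The main obstacle I expect is the bookkeeping of conormal signs and parallel transport: the exponents $z^m$ live in different rings $R^k_{\omega,\tau,\mathfrak{u}}$ and $R^k_{\omega,\tau,\mathfrak{u}'}$, identified via parallel transport along a short path crossing $\rho$ once, and one must check that the orientation conventions for $n$, the direction of the slide $y\to y'$, and the sign $\langle n,\bar m\rangle$ all match up so that the consistency relation at $p$ is applied with the correct exponents. A secondary technical point is that all the functions in sight are invertible in the localised rings $R^k_{\omega,\tau,\mathfrak{u}}$ (this is why we localised at the slab functions), so that the partial products appearing mid-argument genuinely make sense; this was arranged in the definition of the gluing functor and I would simply invoke it. Granting these, the proof reduces to the single consistency identity at a point of $\rho$, which is exactly the hypothesis under which a \emph{structure} is consistent to order $k$, so I would quote the corresponding statement from \cite{TropGeom} rather than re-derive it.
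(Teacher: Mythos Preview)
Your argument misidentifies the source of $y$-dependence and, as a consequence, invokes the wrong cancellation mechanism. By construction of the refinement $\mathscr{P}_k$, every $k$-truncated ray has image a union of cells of $\mathscr{P}_k$; hence no ray endpoint lies in the \emph{interior} of the $1$-cell $\mathfrak{u}\cap\mathfrak{u}'$, and the product $\prod f_{(\mathfrak{d},x)}^{\langle n,\bar m\rangle}$ is the same for every choice of $y$. What actually varies is the connected component of $B_0\cap\mathfrak{u}\cap\mathfrak{u}'$ containing $y$: these components are separated by the singular points of the affine structure lying on $\rho$, not by ray endpoints. Crossing such a singular point changes $f_{\rho,y}$ by the monomial factor $z^{m^\rho_{x,x'}}$ from the change-of-vertex formula in Definition~\ref{def:slab}, which has nothing to do with ray functions.

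The paper's mechanism for cancelling this factor is entirely different from yours: the identification of $z^m$ on the two sides of the formula is made via parallel transport along a short path through $y$, and moving $y$ to the other side of a singular point changes this identification by the monodromy of a simple loop around that point. This monodromy is exactly $z^m\mapsto z^{m+\langle n,\bar m\rangle m^\rho_{x,x'}}$, which cancels against the change-of-vertex factor in the slab function. Your appeal to consistency of the structure is both circular (consistency has not yet been established at this stage; indeed the present Proposition is a prerequisite for defining the functor whose functoriality \emph{is} consistency) and inapplicable (consistency is formulated at joints, i.e.\ vertices of $\mathscr{P}_k$, whereas singular points in the interior of $\mathfrak{u}\cap\mathfrak{u}'$ are not joints).
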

\begin{proof}
Since this is proven in \cite{TropGeom} we only provide a sketch of this proof.  The key observation is that if we change from $y$ to $y'$ in a different component of $\mathfrak{u} \cap \mathfrak{u} \cap B_0$ we change the slab function by the transition function given in Definition~\ref{def:slab}.  However we also change the identification of this stalk with $R^k_{\omega,\tau,\mathfrak{u}'}$  by parallel transport, which may be interpreted as precomposing this map with the isomorphism induced by a simple loop around the singular point.  The factors in these two isomorphisms are the same, but occur with different signs, ensuring that the change of path does not alter the change of chamber map.
\end{proof}

\subsubsection{Functoriality}

We have now defined a map on objects and on `elementary' morphisms; however we need to show both that this is well defined and that this is a functor.  We first define a \emph{joint} which will be used to formulate a necessary and sufficient condition for functoriality:

\begin{definition}
A vertex of $\mathscr{P}_k$ not contained in the boundary of $B$ is called a \emph{joint}.  The collection of joints of $\mathscr{P}_k$ is denoted $\textrm{Joints}\left(\mathscr{S},k\right)$.
\end{definition}

\noindent Indeed, fixing a $\mathfrak{j} \in \textrm{Joints}\left(\mathscr{S},k\right)$ and a cyclic ordering $\mathfrak{u}_1, \cdots ,\mathfrak{u}_k$ of the chambers around this vertex one has a necessary condition for $F_k$ to be a functor:
\begin{equation}\label{eq:consistent}
\theta_{\mathfrak{u}_1,\mathfrak{u}_2} \circ \cdots \circ \theta_{\mathfrak{u}_k,\mathfrak{u}_1} = \text{Id}
\end{equation}

\noindent The content of Theorem 6.28 of \cite{TropGeom} is that it is sufficent to check this identity at every joint.  Given what have said already, this is a purely formal exercise and the reader is referred to \cite{TropGeom} for the proof of this result.

\begin{definition}
Given a structure $\mathscr{S}$ and a joint $\mathfrak{j}$ we say $\mathscr{S}$ is \emph{consistent at $\mathfrak{j}$ to order $k$} if and only if Equation~\ref{eq:consistent} holds at $\mathfrak{j}$ to order $k$.  $\mathscr{S}$ is called \emph{compatible to order $k$} if it is consistent to order $k$ at every joint.
\end{definition}

By Theorem 6.28 of \cite{TropGeom} compatibility of the structure $\mathscr{S}$ implies the existence of a well defined functor from the category $\textrm{\underline{Glue}}\left(\mathscr{S},k\right)$ to \underline{Rings}.

\section{Consistency and Scattering}
\label{sec:scattering}

We saw in the last section that in order for the gluing functor to be well defined we need to guarantee a consistency condition on the structure.  In this section we shall describe an inductive algorithm for ensuring this is the case at each order.  Theorem 6.28 of \cite{TropGeom} has reduced this to a local computation at each joint.  Indeed, fixing a joint $\mathfrak{j}$ we shall construct a \emph{scattering diagram} $\mathfrak{D}_{\mathfrak{j}}$ which will encode this local data.  We begin by outlining the necessary theory associated with scattering diagrams.

\subsection{Scattering diagrams at joints}

This section is based on Section 6.3.3 of \cite{TropGeom} and on \cite{GPS}.  This section is also largely independent of the rest of the article; we can make these definitions independently of a structure $\mathscr{S}$ or an affine manifold $B$.

We shall fix the following data:
\begin{enumerate}
\item A lattice $M \cong \Z^2$, and denote $N = \textrm{Hom}_{\Z}\left(M,\Z\right)$.
\item $P$ a monoid, and a map $r\colon P \rightarrow M$.  We shall denote $\mathfrak{m} := P \backslash P^{\times}$.
\end{enumerate}

\noindent The scattering diagram itself will consist of a number of \emph{rays} and \emph{lines}:

\begin{definition}
A \emph{ray} (resp. \emph{line}) is a pair $\left(\mathfrak{d}, f_{\mathfrak{d}}\right)$.  Here $\mathfrak{d} = m_0' - \R_{\geq 0}m_0$ for a ray (resp. $\mathfrak{d} = m_0' - \R m_0$ for a line).  Viewing $\mathfrak{d}$ as a set gives the \emph{support} of the ray (line).  If $\d$ is a ray we call $m_0'$ the \emph{initial point}.  The function $f_{\d}$ is an element of $\widehat{k\left[P\right]}$, with the completion taken with respect to $\mathfrak{m}$, such that:
\begin{itemize}
\item $f_{\d}$ is congruent to one modulo the maximal ideal, i.e. $f_{\d} \in 1 \bmod \mathfrak{m}$
\item $f_{\d}$ may be written $f_{\d} = 1 + \sum{c_mz^m}$ such that if $c_m \neq 0$, $r(m) = Cm_0$ for a positive rational number $C$.
\end{itemize}
\end{definition}

\begin{definition}
A \emph{scattering diagram} $\mathfrak{D}$ over $k[P]/I$ is a finite collection of rays and lines such that $f_{\mathfrak{d}} \in k[P]$.
\end{definition}

Given a ray or a line $\left(\d,f_{\d}\right)$ we define an automorphism of $k[P]/I$ as follows:

Fix a path $\gamma$ that intersects $\d$ transversely and a primitive element $n \in N$ annihilating the support of the ray such that the direction $n$ is compatible with the orientation of the $\gamma$.

Given these choices, set $\theta_{\gamma,\d}\left(z^m\right) = z^mf_{\d}^{\left\langle n,r\left(m\right)\right\rangle}$.  Composing these in sequence we can describe automorphisms arising from longer paths, or indeed loops, forming the \emph{path ordered product} associated with these paths.  Specifically, given a path $\gamma$ we may define $\theta_{\gamma,\mathfrak{D}} = \theta_{\gamma,\d_1} \cdots \theta_{\gamma,\d_n}$ so long as $\gamma$ intersects each of the $\d_i$ transversely at time $t_i$, with $t_i > t_{i+1}$, and avoids the intersection points of any rays or lines.

\begin{remark}
One may equivalently define the wall crossing automorphism $\theta_{\gamma,\d}$ by considering the element $f_\d\partial_n$ of the Lie algebra of log derivations.  The element $\theta_{\gamma,\d}$ of $\textrm{Aut}\left(k[P]/I\right)$ is obtained by exponentiation from this Lie algebra.  For more details the reader is referred to \cite{GPS}.
\end{remark}

There is a natural notion of consistency for a scattering diagram:

\begin{definition}
A scattering diagram $\mathfrak{D}$ is \emph{consistent} if and only if the path ordered product around any loop for which this product is defined is the identity in $\textrm{Aut}\left(k[P]/I\right)$.
\end{definition}

One fundamental property of scattering diagrams is that one may add rays in an essentially unique fashion to achieve consistency.  This is the content of the following result of Kontsevich--Soibelman:
\begin{thm}
Given a scattering diagram $\mathfrak{D}$, then there is a scattering diagram $\textrm{S}_I\left(\mathfrak{D}\right)$ such that $\textrm{S}_I\left(\mathfrak{D}\right) \backslash \mathfrak{D}$ is entirely rays, and is consistent over the ring $k[P]/I$.
\end{thm}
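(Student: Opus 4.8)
The plan is to prove the theorem by an order-by-order induction, constructing $\mathrm{S}_I(\mathfrak{D})$ as a limit of scattering diagrams each consistent to one higher order. Filter $k[P]/I$ by the images of the powers $\mathfrak{m}^j$; since the completion $\widehat{k[P]}$ is taken with respect to $\mathfrak{m}$, it suffices to produce, for each $j$, a finite diagram $\mathfrak{D}_j \supseteq \mathfrak{D}$ with $\mathfrak{D}_j \setminus \mathfrak{D}$ consisting only of rays whose functions are congruent to $1$ modulo $\mathfrak{m}$, such that the path-ordered product $\theta_{\gamma, \mathfrak{D}_j}$ around any small loop $\gamma$ about the origin is the identity modulo $\mathfrak{m}^{j+1}$, and then to take $\mathrm{S}_I(\mathfrak{D}) = \bigcup_j \mathfrak{D}_j$; this union stabilises modulo every power of $\mathfrak{m}$, hence modulo $I$. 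For small $j$ there is nothing to do, since $\theta_{\gamma,\mathfrak{D}} \equiv \mathrm{Id}$ to low order. The content lies entirely in the inductive step.

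So suppose $\mathfrak{D}_j$ is consistent modulo $\mathfrak{m}^{j+1}$. Using the Lie-theoretic description of wall crossings recalled in the Remark after the definition of $\theta_{\gamma,\mathfrak{d}}$, write $\theta_{\gamma,\mathfrak{D}_j} = \exp(\delta_\gamma)$ in the pro-nilpotent Lie algebra of log derivations of $\widehat{k[P]}$; by hypothesis $\delta_\gamma$ lies in $\mathfrak{m}^{j+1}$ times that Lie algebra. One checks, as in \cite{GPS} and \S 6.3.3 of \cite{TropGeom}, that modulo $\mathfrak{m}^{j+2}$ this obstruction $\delta_\gamma$ is independent of $\gamma$ and equals a finite sum $\sum_m a_m z^m \partial_{n_m}$ over exponents $m$ with $r(m) \neq 0$, where each $n_m \in N$ annihilates $r(m)$; that is, each term carries a well-defined wall direction $-\R_{\geq 0}\, r(m)$. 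For each such $m$ one then adds, or amends the function of, a ray $\mathfrak{d}_m = -\R_{\geq 0}\, r(m)$, taking $f_{\mathfrak{d}_m} = 1 - a_m z^m + (\text{higher order terms})$, so that its contribution to the path-ordered product cancels $a_m z^m \partial_{n_m}$ to leading order; contributions with collinear $r(m)$ are absorbed into a single ray. This produces $\mathfrak{D}_{j+1}$, consistent modulo $\mathfrak{m}^{j+2}$, and only finitely many rays are added at each stage because $\mathfrak{D}$ is finite and, modulo $\mathfrak{m}^{j+2}$, only finitely many exponents occur.

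The main obstacle is the structural claim that the order-$(j+1)$ obstruction genuinely has the form $\sum_m a_m z^m \partial_{n_m}$ with each $\partial_{n_m}$ tangent to the corresponding wall, for otherwise no choice of rays could cancel it. This is precisely where consistency of $\mathfrak{D}_j$ at lower orders enters: the Baker--Campbell--Hausdorff corrections that would produce terms $z^m \partial_n$ with $\langle n, r(m)\rangle \neq 0$ vanish because the lower-order part of $\theta_{\gamma,\mathfrak{D}_j}$ already closes up around $\gamma$; equivalently $\delta_\gamma$ is a closed element of the relevant degree-$(j+1)$ piece of the governing complex, hence \emph{integrable} by wall crossings. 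I would prove this structural lemma carefully and refer to \cite{KoSo}, \cite{GPS} and \cite{TropGeom} for the remaining, essentially formal, points: existence and uniqueness of the cancelling ray functions at each order, finiteness, and the fact that the resulting diagram is well-defined up to the standard equivalence of scattering diagrams (merging collinear rays, deleting rays with trivial function)---which is the precise sense in which the added rays are \emph{essentially unique}.
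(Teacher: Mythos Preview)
Your proposal is essentially correct and follows the same approach the paper indicates: the paper's proof is merely a one-line reference to the Lie-algebraic calculation in the tropical vertex group as carried out in \cite{GPS}, and your outline is precisely that standard Kontsevich--Soibelman/GPS order-by-order argument.

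One point deserves correction. You attribute the structural lemma---that the order-$(j{+}1)$ obstruction $\delta_\gamma$ is a sum of terms $a_m z^m\partial_{n_m}$ with $\langle n_m,r(m)\rangle=0$---to consistency of $\mathfrak{D}_j$ at lower orders. This is not the reason. The correct reason is purely algebraic: each wall-crossing automorphism lies in the tropical vertex group, whose Lie algebra is spanned by $z^m\partial_n$ with $\langle n,r(m)\rangle=0$, and this subspace is \emph{closed under the Lie bracket} (a direct computation, cf.\ \cite{GPS}). Hence every Baker--Campbell--Hausdorff term, and therefore $\log\theta_{\gamma,\mathfrak{D}_j}$ itself, automatically lies in this subalgebra; no appeal to lower-order consistency is needed. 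Lower-order consistency is what forces $\delta_\gamma\in\mathfrak{m}^{j+1}$, not what forces the tangency condition. Similarly, the fact that only exponents with $r(m)\neq 0$ appear follows because anti-parallel generators commute in this Lie algebra, so no brackets produce terms with $r(m)=0$. With this correction your argument goes through.
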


\begin{proof}
The proof is a calculation in the Lie algebra of log derivations and the subalgebra which exponentiates to the tropical vertex group.  This is discussed in much more detail in \cite{GPS}.
\end{proof}

We now have a framework in which we can introduce corrections to order $k$, inductively making a scattering diagram consistent.  Recalling that we have fixed a joint $\mathfrak{j}$ in $\mathscr{S}$ on $\left(B,\mathscr{P},\phi\right)$ we fix the data required to define a scattering diagram:
\begin{definition}
Let the lattice be $M = \Lambda_{\mathfrak{j}}$, the monoid $P = P_{\phi, \sigma_{\mathfrak{j}},\sigma}$ and the map $r\colon P \rightarrow M$ be given by $m \mapsto \bar{m}$.  Noting that in general we have a maximal ideal $\mathfrak{m} = P\backslash P^{\times}$ we fix an $\mathfrak{m}\textrm{-primary ideal}, I = I^k_{\sigma_j, \sigma_j,\sigma}$.
\end{definition}

We construct the scattering diagram $\mathfrak{D}_{\mathfrak{j}}$ in two steps.
\begin{enumerate}
\item If $\mathfrak{j} \subset \rho$ where $\rho$ is a slab, that is $\rho \cap \Delta \neq \varnothing$, then we factorize $f_{\rho,x}$ for $x \in \rho$, writing $f_{\rho,x} = \prod_j{1+c_{\rho,j}z^{l_jm_{\rho,x}}}$.  For each $j$ we add the following line to the scattering diagram:
\[
\left( \R m , 1 + c_{\rho, j}z^{l_jm_{\rho,x}}\right)
\]
where $m$ is the primitive vector in the direction of $T_x\rho$.
\item For each ray $\d$ in $\mathscr{S}_{k-1}$ such that there exists $x \in \left[0,N^k_{\d}\right]$ with $\d(x) \in \mathfrak{j}$ we add either a ray or a line. If $x = 0$ we add a ray:
\[
\left( \R_{\geq 0}\mathfrak{d}'\left(x\right) , 1 + c_{\d}z^{m_\d,x}\right)
\]
otherwise we add the line with the same function.
\end{enumerate}

Section 6.3.3 of \cite{TropGeom} establishes that if $\dim \sigma_j  \in \left\{0,2\right\}$ then in fact $\mathfrak{D}_j$ satisfies all the requirements of a scattering diagram and so one may apply the Kontsevich--Soibelman algorithm and obtain a consistent scattering diagram $S_I\left(\mathfrak{D}_j\right)$.  The rays of $S_I\left(\mathfrak{D}_j\right)$ are then `exponentiated' to give rays locally in the structure $\mathscr{S}$ which then propagate in $B$.

Of course we have not dealt with the case that $\dim \sigma_j = 1$.  This is harder because the candidate scattering diagram does not satisfy the requirement that $f_{\d} \in 1 \bmod \mathfrak{m}$ for those lines coming from the slabs.  Indeed, those functions always have order zero in the interior of $\rho$.  A solution would be to try and prove an analogue of the Kontsevich--Soibelman Lemma over the localised ring $\left(k[P]/I\right)_{f_{\rho,x}}$. However, the approach taken in \cite{TropGeom} is to work in an even larger ring, define a `universal' scattering diagram and view the localised ring as a subring.  Since we impose slightly weaker assumptions on the singular locus $\Delta$ than appear in \cite{TropGeom} we require a slightly stronger result, which is the topic of the next section.

\subsection{Localising scattering diagrams}

This section details the required modest amendments to Proposition 6.47 of \cite{TropGeom} needed in order to extend that result to `non-simple' settings. Roughly, by replacing coefficients with formal variables one may embed the localised ring in a completion of the original ring with respect to a sequence of ideals $I_e$.  Once one can show that the scattering diagrams $\text{S}_{I_e}\left(\mathcal{D}\right)$ stabilize we may form the scattering diagram over this completed ring.

Before stating the proposition we require some results from \cite{TropGeom} relating scattering diagrams and enumerative geometry.  To state these we first consider a scattering digram of the following form:
\begin{equation}\label{eq:scattering_form}
\mathfrak{D} = \left\{\R m_i ,\left( \prod_{j=1}^{p_i}\prod^{l_{ij}}_{k=1}\left(1+t_{ijk}z^{-jm_i}\right)\right) : 1 \leq i \leq p \right\}
\end{equation}
Starting with this scattering diagram we shall study $S\left(\mathfrak{D}\right)$, over the ring $k[M][\![ \left\{t_{ijk}\right\} ]\!]$.  Note that we can always reduce by an $\mathfrak{m}$-primary ideal $I$, to form $S_I\left(\mathfrak{D}\right)$.
We further assume that no two rays have the same support and fix a ray $\left(\d,f_\d\right) \in S\left(\mathfrak{D}\right) \backslash \mathfrak{D} $.  Reducing mod $I$ we can assume that $f_\d$ is a polynomial.  We now construct a toric variety corresponding to $\d$:

\begin{definition}
Let $X_{\d}$ be the non-singular toric surface associated to the complete fan $\Sigma_{\d}$ which includes the rays: $\R_{\geq 0}m_i$ and $\d$ for each $m_i$ in the definition of the scattering diagram above.  Let $D_i$ denote the toric divisor corresponding to $m_i$ and let $D_{\textrm{out}}$ denote the toric divisor corresponding to $\d$.
\end{definition}

We also need some auxiliary combinatorial definitions to state an enumerative formula for $f_{\d}$:

\begin{definition}
A graded partition $G$ is a finite sequence $G = \left(P_1,\cdots, P_d\right)$ of ordered partitions $P_i = \left(p_{i1},\cdots, p_{il_i}\right)$, where $i\mid p_{ij}$ for each $i$ and $j$.  We call $p_{ij}$ the \emph{parts} of $P_i$ and define $|P_i| = \sum_j{p_{ij}}$ and $|G| = \sum{|P_i|}$.
\end{definition}

Now let $G = \left(G_1, \cdots G_p\right)$ be a tuple of graded partitions, where we denote by $P_{ij}$ the $j$th piece of $G_i$ and write $P_{ij} = \left(p_{ij1}, \cdots , p_{ijl_{ij}} \right)$.

As in \cite{TropGeom} restrict to those $G$ such that
\begin{equation}\label{eq:partition_condition}
-\sum{|G_i|m_i} = k_Gm_\d
\end{equation}
for some $k_G \in \Z_{>0}$.  Now fix the class $\beta \in H_2\left(X_\d,\Z\right)$ such that:
\begin{enumerate}
\item If $D \notin \left\{D_1,\cdots D_p,D_{\textrm{out}}\right\}$ then $\beta.D = 0$
\item $\beta.D_i = |G_i|$
\item $\beta.D_{\textrm{out}} = k_G$
\end{enumerate}
If $D_{\textrm{out}} = D_i$ for some $i$ replace the above prescription of $\beta.D_i$ with $\beta.D_i = |G_i| + k_G$.  Next pick general points $x_{ijk}$ on $D_i$ and recall the notion of an orbifold blowup from \cite{GPS}:
\begin{definition}
Let $p \in D$ be a point in a non-singular divisor in a surface $S$.  There is a unique length $j$ subscheme supported at $p$. Let $\mathcal{S}_j \rightarrow S_j \rightarrow S$ be the composition of the blowup map in this ideal sheaf and the coarse moduli map from the unique orbifold structure on the singular variety $S_j$.
\end{definition}
\begin{remark}
The exceptional divisor $E$ in the blown-up space has self intersection $\left[E\right]^2 = -1/j$
\end{remark}

\noindent We now define a space by making the orbifold blow-ups designated by $G$.

\begin{definition}
Let $\nu \colon X\left[G\right]\rightarrow X$ be the length $j$ orbifold blow-up of $X$ in each of the points $x_{ijk}$.
\end{definition}

\noindent We shall use a Gromov-Witten invariant associated to the strict transform:
\[
\beta_G = \nu^*\left(\beta\right) - \sum_{ijk}{p_{ijk}\left[E_{ijk}\right]}
\]
Colloquially this is the virtual number of rational curves with tangency order $k_G$ along $D_\textrm{out}$ at exactly one point, and $p_{ijk}/j$ branches tangent to $D_i$ with order $j$ at $x_{ijk}$.  The precise definition is an integral over a moduli space of stable relative maps with orbifold target space $X^o_\d$; see \cite{GPS}.  Here, conforming to the notation of \cite{GPS}, $X^o_\d$ is the space obtained by removing the toric zero-strata from $X_{\d}$.  We call the result of the blow-up $\nu$, $\widetilde{X}^o_{\d}$.

Theorem 6.44 of \cite{TropGeom} describes $\log\left(f_{\d}\right)$ in terms of these Gromov-Witten invariants:
\begin{thm}
\[
\log\left(f_\d\right) = \sum_G k_GN_Gt^Gz^{-k_Gm_\d}
\]
where $t^G = \prod{t_{ijk}^{p_{ijk}/j}}$ and the sum is over graded partitions $G$ satisfying Equation~\ref{eq:partition_condition}.
\end{thm}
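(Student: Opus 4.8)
The plan is to prove this by passing through tropical geometry, exactly as in \cite{GPS}: the identity links an algebraic object, the wall function $f_\d$ produced by the Kontsevich--Soibelman algorithm, with the enumerative invariants $N_G$, and the natural bridge is a weighted count of tropical plane curves. First I would reduce to a universal computation. Working modulo an $\mathfrak{m}$-primary ideal $I$ and writing $\log f_\d = \sum_{k \geq 1} a_k z^{-k m_\d}$, each $a_k$ is determined recursively by the requirement that the path-ordered product of $\mathfrak{D} \cup \{(\d, f_\d)\}$ around a small loop be the identity to order $k$; expanding the brackets in the tropical vertex Lie algebra shows $a_k$ is a universal polynomial in the scattering coefficients $t_{ijk}$ of \eqref{eq:scattering_form}. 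The fact that it is $\log f_\d$, not $f_\d$ itself, that has the clean form in the theorem reflects that the Lie bracket isolates \emph{connected} contributions.

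Next I would identify this universal polynomial with a tropical count. By the tropical interpretation of scattering (\cite{GPS}; \S6.3.3 of \cite{TropGeom}), the coefficient of $t^G$ in $a_{k_G}$ is a count, with Mikhalkin vertex multiplicities, of tropical curves in $\R^2$ that have, for each part $p_{ijk}$ of the ordered partition $P_{ij}$, an unbounded edge of weight $p_{ijk}$ in direction $m_i$ through the fixed general point $x_{ijk}$, together with one unbounded edge of weight $k_G$ in direction $m_\d$. Balancing forces precisely relation \eqref{eq:partition_condition} and the constraint on $k_G$, and the fractional exponent $p_{ijk}/j$ in $t^G = \prod t_{ijk}^{p_{ijk}/j}$ records that an edge of weight $p_{ijk}$ pointing along a $j$-times-subdivided direction carries $p_{ijk}/j$ strands.

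The remaining, and in my view hardest, step is the correspondence tropical count $= k_G N_G$. Here I would degenerate $\widetilde{X}^o_\d$ into a union of toric pieces whose dual complex refines the fan $\Sigma_\d$ finely enough that the constraint points $x_{ijk}$ lie on distinct irreducible components, and apply the degeneration formula for relative stable maps. The terms of this formula are indexed exactly by the tropical curves of the previous step, and a local-model computation at each trivalent vertex matches the vertex contribution to the Mikhalkin multiplicity; the role of the length-$j$ orbifold blow-up $X[G] \to X_\d$ is to turn the tangency datum ``$p_{ijk}/j$ branches of contact order $j$ at $x_{ijk}$'' into a transverse incidence with $E_{ijk}$, so that along $D_\mathrm{out}$ the only surviving tangency is the order-$k_G$ condition at one point, whose automorphism factor supplies the prefactor $k_G$. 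Combining the three steps yields the stated formula.

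The main obstacle is making the third step rigorous when some $j > 1$: one needs a degeneration formula for relative Gromov--Witten invariants with \emph{orbifold} target and with the fractional intersection numbers $[E_{ijk}]^2 = -1/j$ in play, together with a correspondence theorem in that generality. When every $j = 1$ this is precisely \cite{GPS}. For $j > 1$ I would either establish the orbifold degeneration formula directly, or express the length-$j$ orbifold blow-up as an iterated ordinary blow-up followed by a root-stack construction along the exceptional locus, thereby reducing to the integral case; the delicate point in the latter approach is checking that this reduction respects the bookkeeping of the graded partitions $G$ and introduces no spurious contributions.
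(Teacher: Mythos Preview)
The paper does not prove this statement at all: it is simply quoted as Theorem~6.44 of \cite{TropGeom}, without argument, and is used as a black box in the subsequent stabilisation result (Proposition~\ref{prop:stab}). So there is no ``paper's own proof'' against which to compare your attempt.

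That said, the outline you give is essentially the proof of Theorem~6.44 in \cite{TropGeom}, which in turn is the main theorem of \cite{GPS}. Your three-step architecture --- (i) interpret the Kontsevich--Soibelman recursion in the tropical vertex Lie algebra as producing universal polynomials in the $t_{ijk}$, (ii) identify those polynomials with weighted tropical curve counts via the scattering/tropical correspondence, and (iii) invoke a degeneration formula to equate the tropical count with $k_G N_G$ --- is exactly the strategy of \cite{GPS}. You are also right that the serious work is concentrated in step~(iii), and that the $j>1$ case (orbifold blow-ups with $[E_{ijk}]^2 = -1/j$) is where the analytic input lies; \cite{GPS} handles this directly rather than by reducing to iterated ordinary blow-ups.

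One minor caution: your proposed alternative for $j>1$, namely ``express the length-$j$ orbifold blow-up as an iterated ordinary blow-up followed by a root-stack construction'', is not quite the right geometry --- the length-$j$ orbifold blow-up is a single weighted blow-up, and recasting it via iterated ordinary blow-ups changes the relative invariants one is computing. If you want to avoid the orbifold degeneration formula, the cleaner route (and the one closer to what \cite{GPS} actually does) is to work with the weighted blow-up directly and establish the local contribution at each vertex by an explicit relative computation, rather than trying to factor the blow-up. But since the paper under review takes this entire theorem for granted, none of this is required here.
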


\noindent We also recall Remarks 6.45 and 6.46 of \cite{TropGeom}:

\begin{remark}
The definition of relative stable maps includes the possibility of maps $f\colon C \rightarrow \widehat{X}^o_{\d}$ to a reducible scheme, but $\widehat{X}^o_{\d}$ comes with a map to $\widetilde{X}^o_{\d}$ and thus fits into a diagram:

\begin{displaymath}
    \xymatrix{
    C \ar[r]^f \ar@/^2pc/[rr]^{\tilde{f}} \ar[drr]_{\bar{f}}& \widehat{X}^o_{\d} \ar[r] & \widetilde{X}^o_{\d} \ar[d]^{\nu} \\
    & & X^o_{\d}}
\end{displaymath}

\noindent Results cited in \cite{TropGeom} imply that $\tilde{f}\left(C\right)\cap \tilde{D}^o_i = \varnothing$.  We can now make statement about the intersection properties of $\bar{f}_*[C]$.  In particular as this represents $\beta_G$ the intersection multiplicity at each of the points $x_{ijk}$ must be exactly $p_{ijk}$.  Futher there is a point $q \in D_{\textrm{out}}$ such that $\bar{f}_*\left[C\right] \cap \partial X_{\d} = \left\{x_{ijk}\right\} \cup \left\{q \right\}$, and this point is constrained to lie on one of finitely many points of $D_{\textrm{out}}$.  The full argument is in \cite{TropGeom}, but in short one can describe the restriction of $\bar{f}_*\left[C\right]$ to $\partial X_{\d}$ in terms of $q$, but this is in the linear equivalence class given by $\beta|_{\partial X_{\d}}$, so only those values of $q$ which will land in this equivalence class are permitted.
\end{remark}

We now relate general scattering diagrams to the apparently special type we described above.

\begin{remark}
A general scattering diagram consisting solely of lines is equivalent to one of the form:
\[
\mathfrak{D} = \left\{\left(\R\bar{m}_i, \prod_{j,k}\left(1+c_{ijk}z^{-m_{ijk}}\right)\right): 1 \leq i \leq p\right\}
\]
such that $r\left(m_{ijk}\right)$ is proportional to $\bar{m}_i$ with index $j$.  We now define a scattering diagram of the form considered in \ref{eq:scattering_form}:
\[
\mathfrak{D}' = \left\{\left(\R\bar{m}_i, \prod_{j,k}\left(1+t_{ijk}z^{-r\left(m_{ijk}\right)}\right)\right) : 1 \leq i \leq p\right\}
\]
This is now a scattering diagram over $k[M]\llbracket\{t_{ijk}\}\rrbracket$.  Thus we have an enumerative interpretation for the rays of $S(\mathfrak{D}')$.  We shall refer to this as a `universal scattering diagram'.  Rather than defining $\mathfrak{D}'$ over all $k[M]\llbracket\{t_{ijk}\}\rrbracket$ we can consider the monoid $Q \subseteq M \oplus \N^l$ where the second factor corresponds to the $t_{ijk}$ variables and $l = \sum_{i,j}l_{ij}$.  There is a ring homomorphism $\phi\colon t_{ijk}z^{-r\left(m_{ijk}\right)} \mapsto c_{ijk}z^{-m_{ijk}}$ and we can define $\mathfrak{D}'$ over $k[Q]/\phi^{-1}(I)$ for an $\mathfrak{m}\textrm{-primary ideal}$ $I$.  Following \cite{TropGeom} we observe that there is a scattering diagram $\phi\left(S_{I'}\left(\mathfrak{D}'\right)\right)$ which is equivalent to $S_I(\mathfrak{D})$.
\end{remark}

\begin{remark}
Given a joint $\mathfrak{j}$ supported on the interior of a 1-cell $\tau$ we may write down a collection of rays and lines as for a scattering diagram; we refer to this collection of rays and lines as $\mathfrak{D_j}$ and write: 
\[
\overline{\mathfrak{D}}_{\mathfrak{j}} = \left\{ \d \in \mathfrak{D_j} : \d \textrm{ is a line} \right\}
\]
By rewriting and factorising the functions attached to the slab and rays intersecting this joint we may assume $\overline{\mathfrak{D}}_{\mathfrak{j}}$ is of the form:
\[
\overline{\mathfrak{D}}_{\mathfrak{j}} = \left\{ \R\bar{m}_i , \prod_{j,k}{\left(1+c_{ijk}z^{-m_{ijk}}\right)} \right\}
\]
Deviating from \cite{TropGeom}, there may be several factors (not just one) which are not in the maximal ideal $\mathfrak{m}$. 
\begin{definition}
Notice that we have factorized the slab function at $\mathfrak{j}$; consequently we may define a set $\mathcal{J}$ of triples $(i,j,k)$ such that:
\[
f_{\tau,\mathfrak{j}} = \prod_{\mathcal{J}}{\left(1+c_{ijk}z^{-m_{ijk}}\right)}
\]
\end{definition}

\noindent Recall that $i$ here indexes the direction vectors of rays, and that any `bad factor' (that is, any factor not of the form $1+x$ with $x \in \mathfrak{m}$) is associated to the (one-dimensional) slab $\tau$. Thus if $(i,j,k) \in \mathcal{J} $ then $i$ must be one of at most two possibilities.  If there are two distinct values of $i$ denote them $i_+$,~$i_-$ and note that $\bar{m}_{i_-} = -\bar{m}_{i_+}$.  Conversely, if all the elements of $\mathcal{J}$ have a unique value of $i$ then we shall refer to this as $i_+$ and shall not define $i_-$.

We shall define an inverse system of ideals $I_e \subseteq k[M]\otimes_k k[\{t_{ijk}\}]$ such that $\overline{\mathfrak{D}}_{\mathfrak{j}}$ is a genuine scattering diagram with respect to each $I_e$ and use the enumerative interpretation of these scattering diagrams to show these stabilise as $e \rightarrow \infty$.  This will imply that we can define a scattering diagram over the completion with respect to the inverse system $I_e$; the required localisation is then a subring of this completion.
\end{remark}

\begin{prop}\label{prop:stab}
Consider $J$ a monomial ideal in the ring
\[
R = k\left[t_{ijk} : (i,j,k) \notin \mathcal{J} \right]
\]
with $R/J$ artinian.  For a non-negative integer $e$, let
\[
I_e = \sum_{\left(i,j,k\right) \in \mathcal{J}}\left(t_{ijk}^e\right) + J
\]
in $k[M]\otimes_kk\left[\left\{t_{ijk}\right\}\right]$.  Now apply the Kontsevich--Soibelman algorithm to obtain $S_{I_e}\left(\mathfrak{D}\right)$ and remove all $\left(\d,f_{\d}\right)$ equal to 1 modulo $I_e$.  The sequence of scattering diagrams $\mathfrak{D}_1, \mathfrak{D}_2, \cdots $ stabilizes.
\end{prop}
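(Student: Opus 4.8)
The plan is to follow the proof of Proposition~6.47 of \cite{TropGeom}, inserting the additional argument needed when the slab function at $\mathfrak{j}$ factorizes into several pieces and, in particular, when ``bad'' factors occur in both of the antipodal directions $i_+$ and $i_-$. First I would record the formal reduction. Since $I_{e+1}\subseteq I_e$ and the Kontsevich--Soibelman algorithm commutes with the surjections $k[M]\otimes_k k[\{t_{ijk}\}]/I_{e+1}\twoheadrightarrow k[M]\otimes_k k[\{t_{ijk}\}]/I_e$, the diagram $\mathfrak{D}_e$ is obtained from $\mathfrak{D}_{e+1}$ by reduction; hence stabilization is equivalent to the statement that, over the completion with respect to the inverse system $\{I_e\}$, each ray function $f_{\mathfrak{d}}$ is a \emph{polynomial} in the variables $t_{ijk}$ with $(i,j,k)\in\mathcal{J}$, and that only finitely many supports $\mathfrak{d}$ carry a nontrivial function. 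Fixing an output direction $m_{\mathfrak{d}}$, this reduces in turn to showing that only finitely many graded partitions $G$ contribute to $f_{\mathfrak{d}}$.

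Next I would invoke the enumerative description of $f_{\mathfrak{d}}$ (Theorem~6.44 of \cite{TropGeom}):
\[
\log f_{\mathfrak{d}} = \sum_G k_G\, N_G\, t^G z^{-k_G m_{\mathfrak{d}}},
\]
the sum being over graded partitions with $-\sum_i |G_i| m_i = k_G m_{\mathfrak{d}}$ and with $t^G = \prod t_{ijk}^{p_{ijk}/j}$. Since $R/J$ is artinian, only finitely many choices of the ``good'' multiplicities $|G_i|$, $(i,j,k)\notin\mathcal{J}$, give a monomial surviving modulo $J$. Because $\mathfrak{d}\in S_I(\mathfrak{D})\setminus\mathfrak{D}$ cannot be parallel to the slab (parallel lines do not interact), $m_{\mathfrak{d}}$ and $\bar m_{i_+}$ are linearly independent; using $\bar m_{i_-}=-\bar m_{i_+}$ the balancing relation then becomes a $2$-dimensional linear equation that pins down both $k_G$ and the net multiplicity $|G_{i_+}|-|G_{i_-}|$ in terms of the (finitely many) good data. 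As the exponent of $t_{ijk}$ in $t^G$ is $p_{ijk}/j\le |G_i|$, it remains only to bound $|G_{i_+}|$ and $|G_{i_-}|$ separately.

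This last bound is the crux, and the point at which one must go beyond the locally rigid setting of \cite{TropGeom}: the balancing relation alone controls only the difference of the two multiplicities. Here I would show that $N_G$ vanishes once $|G_{i_+}|+|G_{i_-}|$ is large relative to the fixed good data. By the analysis recalled in Remarks~6.45--6.46, a contributing relative stable map has image a genus-zero curve in $X_{\mathfrak{d}}$ whose intersection with $\partial X_{\mathfrak{d}}$ consists only of the points $x_{ijk}$ together with one further point $q\in D_{\textrm{out}}$, the latter constrained to finitely many positions by the class $\beta|_{\partial X_{\mathfrak{d}}}$; its class $\beta$ satisfies $\beta\cdot D_{i_\pm}=|G_{i_\pm}|$ while $D_{i_+}$ and $D_{i_-}$ are \emph{disjoint} toric divisors. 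Combining adjunction on $X_{\mathfrak{d}}$ with the effectivity of the strict transform $\beta_G = \nu^*\beta - \sum_{ijk} p_{ijk}[E_{ijk}]$ in the iterated orbifold blow-up $X_{\mathfrak{d}}[G]$, and with the fact that the imposed point conditions cut the moduli space of stable relative maps down to a point count, one sees that $\sum_{ijk} p_{ijk}^2/j$ must grow with $\beta^2$, hence with $|G_{i_+}|+|G_{i_-}|$; this is incompatible with the fixed bound on the number and sizes of the parts coming from $J$. Thus $N_G=0$ for all but finitely many $G$, so each ray function is polynomial in the $t_{ijk}$ with $(i,j,k)\in\mathcal{J}$, only finitely many output directions occur, and $\mathfrak{D}_e$ is eventually independent of $e$, which is the assertion.

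I expect the vanishing estimate in the third paragraph to be the main obstacle. In the ``simple'' situation of \cite{TropGeom} the relevant slab data effectively involves a single irreducible factor, so that the balancing relation already pins down every multiplicity and no such estimate is required; with a genuinely factorized slab function supported in two antipodal directions the Gromov--Witten vanishing is what supplies the missing bound, and making the adjunction/dimension bookkeeping precise in the presence of the orbifold exceptional divisors $E_{ijk}$ is the delicate point.
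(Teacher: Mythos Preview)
Your overall architecture matches the paper's: reduce to the enumerative formula for $\log f_{\mathfrak d}$, and show that only finitely many graded partitions $G$ contribute by showing $N_G=0$ once the ``bad'' multiplicities $|G_{i_\pm}|$ are large. The gap is in the third paragraph, where you try to extract that vanishing from adjunction and a dimension count.

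As written, your argument does not close. You observe that $\beta^2$ grows with $|G_{i_+}|+|G_{i_-}|$ and that $\sum_{ijk} p_{ijk}^2/j$ must then grow as well, and you say this is ``incompatible with the fixed bound on the number and sizes of the parts coming from $J$''. But $J$ bounds only the \emph{good} parts $p_{ijk}$ with $(i,j,k)\notin\mathcal J$; the bad parts $p_{i_\pm,j,k}$ are exactly the ones you are trying to bound, and there is no contradiction in letting them grow. Both sides of your inequality are allowed to be large simultaneously, so no bound on $|G_{i_\pm}|$ falls out. The dimension/adjunction bookkeeping you allude to would, if carried through, give a relation of the shape $(-K_{X[G]})\cdot\beta_G=\text{const}$, but this again relates two quantities that both scale with the bad data and does not separate $|G_{i_+}|$ from $|G_{i_-}|$.

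The paper obtains the bound by a different and much more direct geometric device. Since $\bar m_{i_-}=-\bar m_{i_+}$, one can refine $\Sigma_{\mathfrak d}$ so that both $\R_{\ge 0}m_{i_+}$ and $\R_{\le 0}m_{i_+}$ are rays; the corresponding toric divisors $D_{i_+}$, $D_{i_-}$ are the two sections of a toric fibration $\pi\colon X_{\mathfrak d}\to\P^1$. One first rules out (using the constraint on the extra point $q\in D_{\mathrm{out}}$ and by moving a good point $x_{ijk}$) that the image curve contains the fibre through any bad point $x_{ijk}$. Then, for each bad $(i,j,k)$, the fibre through $x_{ijk}$ meets $\bar f_*[C]$ with multiplicity at least $p_{ijk}$, so $p_{ijk}\le \beta\cdot F$. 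Finally, $\beta\cdot F$ is computed on the reducible fibre $\pi^{-1}(0)$ (or $\pi^{-1}(\infty)$, whichever does not contain $D_{\mathrm{out}}$), whose irreducible components are among the $D_i$ with $i\neq i_\pm$; hence $\beta\cdot F$ is determined by the good $|G_i|$, which are bounded by $J$. This bounds every bad $p_{ijk}$ uniformly, independently of $e$ and of $\mathfrak d$. That is the missing idea in your sketch; replacing your adjunction paragraph by this fibration argument makes the proof go through exactly as in the paper.
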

\begin{proof}

Take $\Gamma$ to be the set of collections of graded partitions $G = \left(G_1,\cdots, G_p\right)$ such that:
\[
\prod_{\left(i,j,k\right) \notin \mathcal{J}}{t_{ijk}^{p_{ijk}/j}} \notin J
\]
and such that $p_{ijk}>0$ for some $(i,j,k) \notin \mathcal{J}$.  Now $R/J$ Artinian implies that \emph{having fixed} the values of $\left\{p_{ijk} : (i,j,k) \in \mathcal{J} \right\}$ there are finitely many choices of $G$, but these are themselves unconstrained.  We proceed in two steps, following Proposition 6.47 in \cite{TropGeom}. First we show that there are only a finite number with $N_G \neq 0$, then we bound the number of terms of any $\log f_\d$ independently of $e$.

Suppose $G \in \Gamma$ and $N_G \neq 0$. Then there is a primitive integral vector $m_\d$ such that:
\[
-\sum_{i}|G_i|m_i = k_Gm_\d
\]
and such a ray $(\d,f_\d)$ must appear in the scattering diagram $\mathfrak{D}_e$ with support $\R_{\geq 0} m_\d$.  Let $\Sigma_\d$ be as above; recall this fan is only determined up to arbitrary fan refinements so we may assume that both $\R_{\geq 0} m_{i_+}$ and $\R_{\leq 0}m_{i_+}$ appear in this fan, noting that the latter is equal to $\R_{\geq 0}m_{i_-}$ if $i_-$ is defined.  Hence there exists a toric morphism:
\[
\pi \colon X_\d \rightarrow \P^1
\]
defined by these two rays.  There are two toric sections of this morphism, which we shall refer to as $D_+$ and $D_-$ corresponding to $\R_{\geq 0}m_{i_+}$ and $\R_{\leq 0}m_{i_+}$ respectively.  Now $N_G \neq 0$ implies there is a map $\bar{f}\colon C \rightarrow X_\d$ such that $\bar{f}_*\left(C\right)$ has intersection multiplicity $p_{ijk}$ at $x_{ijk}$ for $\left(i,j,k\right) \in \mathcal{J}$.  Without loss of generality we assume that for any $t \neq 0$,~$\infty$ the fiber $\pi^{-1}(t)$ contains at most one of the points $x_{ijk}$.

We wish to eliminate the possibility that the image of $\bar{f}$ contains $\pi^{-1}\left(\pi\left(x_{ijk}\right)\right)$ for any $(i,j,k) \in \mathcal{J}$.  Observe that $\pi^{-1}\left(\pi\left(x_{ijk}\right)\right)$ meets $\partial X_\d$ at a point other than any $x_{ijk}$; call this point $q'$.  We also know that the divisor class $\sum{p_{ijk}x_{ijk}} + k_Gq$ is of the class $\beta|_{\partial X_\d}$ which is determined by $G$.  Indeed, the set $\bar{f}(C) \cap \partial X_\d$ is the collection $x_{ijk}$ and one additional point $q$.  If we assume that $\bar{f}(C)$ contains this fibre $\pi^{-1}\pi(x_{ijk})$, then we must have that $q=q'$.  However we have assumed that there is at least one $x_{ijk}$ such that $(i,j,k) \notin \mathcal{J}$ and $p_{ijk} > 0$, moving this point alone we obtain a contradiction.

As remarked, $\bar{f}_*\left(C\right)$ represents the class $\beta$ and $\tilde{f}_*\left(C\right)$ represents $\beta_G$, the strict transform defined above. The total transform of $\pi^{-1}\left(\pi\left(x_{ijk}\right)\right)$ contains the irreducible component $E_{ijk}$, and we know that $E_{ijk}.\beta_G = p_{ijk}$.  Thus if $F$ is the class of the fiber of $\pi$, $\beta.F \geq p_{ijk}$.  Now assume $\pi^{-1}(0)$ does not contain $D_{\textrm{out}}$, indeed, swap it with $\pi^{-1}\left(\infty\right)$ if it does. The proper transform of $\pi^{-1}(0)$ is disjoint from $\tilde{f}(C)$ but $\beta.\pi^{-1}\left(0\right)$ is determined by the $G_i$ for $i \neq i_+$,~$i_-$.  Thus $p_{ijk}$ is bounded and this bound is independent of $\d$, so there are a finite number of possiblities for $G \in \Gamma$.

The rest of the proof of Proposition 6.47 in \cite{TropGeom} goes through as stated, expect that now we need to observe that
\[
\left\{\left(\R m_i, 1+t_{ijk}z^{-m_i}\right) : \left(i,j,k\right) \in \mathcal{J} \right\}
\]
contains no rays, meaning that the formula for any ray must have a coefficent $t_{ijk}$ for some $\left(i,j,k\right) \notin \mathcal{J}$, and so the number of terms appearing in the formula for $\log(f_\d)$ is finite, and with bound determined by $J$, that is independent of $e$.  One can now apply a factorization process and generate rays with functions $f_\d$ all of the form $1+cz^m$.

\end{proof}

As remarked in \cite{TropGeom} the purpose of this result to form $ S\left(\mathfrak{D}\right) = \cup S_{I_e}\left(\mathfrak{D}\right)$, which is a scattering diagram over the completion of $A = \C[M]\otimes k\left[\{t_{ijk}\}\right]$ with respect to $\sum_{\left(i,j,k\right) \in \mathcal{J}}{\left(t_{i,j,k}\right)}$; this completion contains the subring given by localising $A$ at the various factors $1+t_{ijk}z^{-m_{ijk}}$.

We can now generate rays in the structure $\mathscr{S}$ from the rays of this scattering diagram, yielding a compatible structure:
\begin{thm}
$\mathscr{S}_k$ is compatible to order $k$.
\end{thm}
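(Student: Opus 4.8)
The plan is to argue by induction on $k$, reducing global compatibility to a local statement at each joint via Theorem~6.28 of \cite{TropGeom}, and then achieving local consistency joint-by-joint using the (generalised) Kontsevich--Soibelman algorithm. Assume inductively that $\mathscr{S}_{k-1}$ is compatible to order $k-1$; the base case $k=0$ is the order-zero consistency already built into the log structure together with Proposition~\ref{prop:no_glue}. Since every ray added during the order-$k$ step carries an exponent of order $\geq k$, crossing such a ray acts trivially modulo the ideal defining order $k-1$, so the order-$(k-1)$ consistency obtained at every other joint is undisturbed. By Theorem~6.28 of \cite{TropGeom} it therefore suffices, for each joint $\mathfrak{j}\in\mathrm{Joints}(\mathscr{S},k)$ separately, to add rays making the local collection $\mathfrak{D}_{\mathfrak{j}}$ — assembled from the slab function(s) through $\mathfrak{j}$ and from the rays of $\mathscr{S}_{k-1}$ meeting $\mathfrak{j}$, as in Section~\ref{sec:scattering} — consistent over $k[P]/I^{k}_{\sigma_{\mathfrak{j}},\sigma_{\mathfrak{j}},\sigma}$.

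Next I would split into cases according to $\dim\sigma_{\mathfrak{j}}$. If $\dim\sigma_{\mathfrak{j}}\in\{0,2\}$ then, as recalled from Section~6.3.3 of \cite{TropGeom}, every function attached to $\mathfrak{D}_{\mathfrak{j}}$ is congruent to $1$ modulo $\mathfrak{m}$, so $\mathfrak{D}_{\mathfrak{j}}$ is a genuine scattering diagram in the sense of Section~\ref{sec:scattering}. The Kontsevich--Soibelman theorem then produces a consistent $S_I(\mathfrak{D}_{\mathfrak{j}})$ whose new walls are all rays; exponentiating these and transporting them into $B$ adds to $\mathscr{S}$ precisely the rays restoring consistency at $\mathfrak{j}$ to order $k$. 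That the outcome is still a structure — strictly positive order on every ray exponent, and finitely many $k$-truncated rays — follows from the convexity Lemma (the order strictly increases on passing between cells of $\mathscr{P}$), the resulting well-definedness of the numbers $N^k_{\d}$, and the finiteness inherent in the Kontsevich--Soibelman algorithm.

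The substantive case is $\dim\sigma_{\mathfrak{j}}=1$, i.e.\ $\mathfrak{j}$ interior to a slab $\rho$ meeting $\Delta$: here the factors of $f_{\rho,x}$ have order zero in the interior of $\rho$, so $\mathfrak{D}_{\mathfrak{j}}$ violates $f_{\d}\in 1+\mathfrak{m}$ and the theorem does not apply directly. Following Section~6.3.3 of \cite{TropGeom} I would pass to the universal scattering diagram $\mathfrak{D}'_{\mathfrak{j}}$, replacing each coefficient $c_{ijk}$ of a slab or ray factor by an independent formal variable $t_{ijk}$ and working over $k[Q]/\phi^{-1}(I)$, so that after the substitution every factor is $\equiv 1$ modulo the maximal ideal. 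Applying Proposition~\ref{prop:stab} with $J$ a monomial ideal making $R/J$ artinian, the diagrams $S_{I_e}(\mathfrak{D}'_{\mathfrak{j}})$ stabilise as $e\to\infty$, so $S(\mathfrak{D}'_{\mathfrak{j}}):=\bigcup_e S_{I_e}(\mathfrak{D}'_{\mathfrak{j}})$ is a scattering diagram over the completion of $A=\C[M]\otimes k[\{t_{ijk}\}]$ along $\sum_{(i,j,k)\in\mathcal{J}}(t_{ijk})$. That completion contains as a subring the localisation of $A$ at the factors $1+t_{ijk}z^{-m_{ijk}}$, hence $\phi\bigl(S(\mathfrak{D}'_{\mathfrak{j}})\bigr)$ is a consistent diagram over the localised ring $(k[P]/I)_{f_{\rho,x}}$ at which the rings $R^k_{\omega\tau\mathfrak{u}}$ live; factorising its wall functions into pieces $1+cz^m$ and exponentiating them into $B$ supplies the rays restoring consistency at $\mathfrak{j}$.

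The main obstacle is exactly this last case, and more precisely the departure from \cite{TropGeom} that the slab function may have \emph{several} bad factors (not of the form $1+x$, $x\in\mathfrak{m}$), possibly distributed over the two opposite directions $i_{\pm}$ with $\bar m_{i_-}=-\bar m_{i_+}$. Stabilisation of $S_{I_e}(\mathfrak{D}'_{\mathfrak{j}})$ then has to be proved with this extra freedom present; this is what Proposition~\ref{prop:stab} achieves, by introducing the toric fibration $\pi\colon X_{\d}\to\P^1$ and using the disjointness of $\tilde f(C)$ from the proper transform of a fibre to bound the tangency orders $p_{ijk}$ for $(i,j,k)\notin\mathcal{J}$ independently of $e$. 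Granting Proposition~\ref{prop:stab} and Theorem~6.28 of \cite{TropGeom}, collecting the rays produced at all joints yields a structure $\mathscr{S}_k$ that is consistent at every joint to order $k$, hence compatible to order $k$, which closes the induction.
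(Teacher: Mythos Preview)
Your proposal is correct and follows essentially the same approach as the paper: the paper's proof is a one-line reference stating that the argument of Theorem~6.49 in \cite{TropGeom} goes through verbatim once Proposition~6.47 there is replaced by Proposition~\ref{prop:stab}, and what you have written is precisely an unpacking of that argument---induction on $k$, reduction to joints via Theorem~6.28 of \cite{TropGeom}, the easy cases $\dim\sigma_{\mathfrak j}\in\{0,2\}$, and the $\dim\sigma_{\mathfrak j}=1$ case handled via the universal diagram and the stabilisation Proposition~\ref{prop:stab}. The only minor quibble is that invoking Proposition~\ref{prop:no_glue} for the base case is slightly beside the point (it concerns open gluing data rather than order-zero consistency per se); the base case really rests on the consistency condition imposed on the log structure at vertices.
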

\begin{proof}
The proof of Theorem 6.49 in \cite{TropGeom} now goes through exactly, replacing Proposition 6.47 there with Proposition~\ref{prop:stab} above.
\end{proof}

\section{Constructing the formal degeneration}
\label{sec:gluing}
We outline how the construction of the inverse system of rings in the last two sections allows one to construct a flat deformation by deforming each ring in turn. This section is a variation on Section 6.2.6 in \cite{TropGeom}.

\subsection{Notation}
We define an open set $U^k_\omega$ for each stratum $\omega$, as follows. The sets $U^k_\omega$ together cover the $k$th-order smoothing, and $U^k_\omega$ defines a smoothing of the chart $V(\omega)$ on the central fiber defined in Section~\ref{sec:log_structures}.

\begin{definition}
Let
\[
R^k_\omega := {\varprojlim}_{\omega \subseteq \tau}R^k_{\omega, \tau, \mathfrak{u}_\tau}
\]
and set $U^k_\omega := \Spec R^k_\omega$.
\end{definition}

Since the change of chamber maps are isomorphisms, a different choice of $\mathfrak{u}_\tau$ will yield an isomorphic inverse system -- as proved in 6.2.6 of \cite{TropGeom}.  The main result of this section is:

\begin{prop}
$U^k_\omega$ is a flat deformation of $U^0_\omega$ over $S_k := \Spec k[t]/\big(t^{k+1}\big)$.
\end{prop}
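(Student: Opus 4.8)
The plan is to follow the argument of Section~6.2.6 of \cite{TropGeom}, and I will indicate only its structure, emphasising the one place where our hypotheses force a change. Set $t := z^{(1,0)}$, where $(1,0)$ is the canonical global section of $\mathcal{P}_\phi$ lying over $0 \in \Lambda$, i.e.\ the exponent of order $1$; this is well defined globally because the transition rule $(r,m)\sim(r+d(\phi_j-\phi_i)(m),m)$ fixes $(1,0)$. Every exponent of order $>k$ lies in $I^k_{\omega,\tau,\sigma}$, so $t^{k+1}=0$ in each $R^k_{\omega,\tau,\mathfrak{u}}$; and since the change-of-strata and change-of-chamber maps of Section~\ref{sec:structures} all commute with multiplication by $t$, each $R^k_{\omega,\tau,\mathfrak{u}}$, and hence the finite inverse limit $R^k_\omega = \varprojlim_{\omega\subseteq\tau}R^k_{\omega,\tau,\mathfrak{u}_\tau}$, is a $k[t]/(t^{k+1})$-algebra.

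First I would check that reduction modulo $t$ recovers the central fibre: $R^k_\omega\otimes_{k[t]/(t^{k+1})}k\cong R^0_\omega$, so that $U^k_\omega\times_{S_k}\Spec k\cong U^0_\omega$. Modding out by $t$ annihilates precisely the exponents of positive order, and the slab functions inverted in the definition of $R^k_{\omega,\tau,\mathfrak{u}}$ reduce to the log-structure sections $f_\rho|_{V(v)}$, which are already invertible on the corresponding chart of $X_0(B,\mathscr{P},s)$ (Section~\ref{sec:log_structures}). Because the indexing category $\{\tau:\omega\subseteq\tau\}$ is finite, this reduction commutes with the inverse limit, which gives the claim; this part is the bookkeeping of Section~6.2.6 of \cite{TropGeom}, unchanged.

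For flatness I would use the order filtration. Fix a maximal cell $\sigma$; the monoid ring $k[P_{\phi,\omega}]$ is graded by $\ord_\sigma$, strict convexity of $\phi$ forces all exponents of $P_{\phi,\omega}$ to have non-negative $\ord_\sigma$, multiplication by $t$ raises this grading by $1$, and $I^k_{\omega,\tau,\sigma}$ is homogeneous, so $R^k_{\omega,\tau,\sigma}$ is graded with pieces in degrees $0,\dots,k$. The slab functions at which we localise are homogeneous of order zero (by the canonical-lift lemma preceding Definition~\ref{def:slab}: their lift via $\phi$ consists entirely of order-zero exponents), and the degree-$j$ piece of $k[P_{\phi,\omega}]$ is torsion-free over the degree-$0$ toric subring; hence inverting slab functions preserves the grading, and $R^k_{\omega,\tau,\mathfrak{u}}$ is again graded in degrees $0,\dots,k$. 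The change-of-strata maps are homogeneous of degree $0$, while the change-of-chamber maps multiply $z^m$ by a product of factors each of the form $1+(\text{positive order})$ together with an order-zero slab factor, so they preserve the associated decreasing order filtration; passing to the inverse limit, $R^k_\omega$ acquires an exhaustive filtration $R^k_\omega=F^0\supseteq F^1\supseteq\cdots\supseteq F^{k+1}=0$ with $tF^j\subseteq F^{j+1}$ and $F^1=tR^k_\omega$. Now multiplication by $t^j$ induces an isomorphism $R^k_\omega/tR^k_\omega = F^0/F^1 \xrightarrow{\ \sim\ } F^j/F^{j+1}$: surjectivity because every exponent of order $j$ is $t^j$ times one of order $0$ (and this descends through the localisations), injectivity because $m\mapsto m+j(1,0)$ is injective on exponents and $j\leq k$. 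Equivalently the natural surjection $\bigoplus_{j=0}^{k}t^j\cdot(R^k_\omega/tR^k_\omega)\twoheadrightarrow R^k_\omega$ is an isomorphism, so $R^k_\omega$ is a free $k[t]/(t^{k+1})$-module, hence flat; together with the previous paragraph this is exactly the assertion of the proposition.

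The one point that genuinely departs from \cite{TropGeom} is the localisation step: under local rigidity the slab functions are, up to units, monomials and compatibility with the order grading is automatic, whereas here (having dropped local rigidity) the slab functions may factorise, and one must instead use their order-zero homogeneity together with torsion-freeness of the graded pieces over the order-zero toric subring. I expect this bookkeeping — tracking the order filtration through the localisations and then through the finite inverse limit — to be essentially the only real work; the freeness criterion over the Artinian ring $k[t]/(t^{k+1})$, and the interaction of the finite inverse limit with $-\otimes_{k[t]/(t^{k+1})}k$, are formal and identical to Section~6.2.6 of \cite{TropGeom}.
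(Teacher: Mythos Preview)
Your approach differs substantially from the paper's. The paper proves the proposition by a case analysis on $\dim\omega$: for $\dim\omega=2$ the deformation is trivial; for $\dim\omega=1$ the fibre product is computed explicitly (via Lemma~6.33 of \cite{TropGeom}) as $k[\Lambda_\omega][U,V,t]/(UV-f_\omega t^l,\,t^{k+1})$, which is manifestly flat; and for $\dim\omega=0$ the paper simply notes that this is ``by far the most difficult step'' in \cite{TropGeom,GS1} and defers to those references, remarking that the more general singular locus does not alter that argument.

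You instead attempt a uniform freeness argument via the order filtration. The idea is attractive, but there is a genuine gap precisely at the point the paper calls hardest. Your surjectivity claim for $t^j\colon F^0/F^1\to F^j/F^{j+1}$ is justified only componentwise: in each $R^k_{\omega,\tau,\mathfrak{u}}$ the ring is graded and every order-$j$ exponent is $t^j$ times an order-$0$ one. But the change-of-chamber maps are only filtration-preserving, not grading-preserving, so a compatible tuple lying in $F^j$ need not be $t^j$ times a \emph{compatible} tuple; the order-$0$ lifts in different chambers have no reason to agree under the transition maps. Equivalently, the assertion $F^1=tR^k_\omega$ is exactly what must be proved in the $\dim\omega=0$ case, and it does not follow formally from the componentwise statement together with finiteness of the diagram. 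This is precisely the content that \cite{TropGeom} establishes by a delicate explicit analysis of the inverse limit at a vertex, and which you have elided. So the ``bookkeeping'' you anticipate is not the only real work: the substantive difficulty is showing that freeness survives the inverse limit when $\omega$ is a vertex, and your sketch does not address it.

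A smaller point: your description of the departure from \cite{TropGeom} is not quite right. Local rigidity does not make the slab functions monomial up to units; it only prevents factorisation. The paper's treatment of the non-locally-rigid case shows up in the codimension-$1$ computation (where $f_\omega$ may now have repeated roots) and in the scattering analysis of Section~\ref{sec:scattering}, not in the flatness argument itself.
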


We first compute the central fibre of this degeneration:
\begin{lem}
$U^0_\omega$ is $\Spec k\left[P_{\phi,x}\right]/(t)$ for $x \in \Int(\omega) \cap B_0$.
\end{lem}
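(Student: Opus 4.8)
The statement to prove is that $U^0_\omega = \Spec k[P_{\phi,x}]/(t)$ for any $x \in \Int(\omega)\cap B_0$. The plan is to unwind the definitions: $R^0_\omega$ is the inverse limit over $\omega \subseteq \tau$ of the rings $R^0_{\omega,\tau,\mathfrak{u}_\tau}$, and each of these is $k[P_{\phi,\omega}]/I^0_{\omega,\tau,\sigma}$, possibly localised at a slab function $f_\tau$. So the first step is to identify, for each $\tau \supseteq \omega$, exactly what the order-zero ideal $I^0_{\omega,\tau,\sigma}$ cuts out, and to observe that over the structure sheaf of the central fibre the localisations at slab functions disappear (at order zero the slab function is a unit times a power of a monomial that is already invertible, or its reduction is already invertible in the relevant stratum ring, by the remark that with respect to $\rho$ the slab function has order zero). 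The key identification is that $\ord_\tau(m) > 0$ exactly says that the monomial $z^m$ lies "strictly above" the cell $\tau$; since $\phi$ is strictly convex across cells, the element $t$ — which in this formalism is the monomial of order $1$ supported on a maximal cell — generates precisely the part of $P_{\phi,\omega}$ lying strictly above any full-dimensional $\sigma \supseteq \omega$, and more generally $I^0_{\omega,\tau,\sigma} = \{m \in P_{\phi,\omega} : \ord_\tau(m) > 0\}$ is the ideal generated by the order-one elements sitting above $\tau$.

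The second step is the inverse-limit computation. I would argue that as $\tau$ ranges over all cells with $\omega \subseteq \tau$, the ideals $I^0_{\omega,\tau,\sigma}$ are nested (larger $\tau$, i.e. bigger cell, gives a smaller thickening, hence a larger ideal), with the minimal one attained at $\tau = \omega$ itself: the transition maps in the inverse system are the surjections $R^0_{\omega,\tau,\sigma} \twoheadrightarrow R^0_{\omega,\tau',\sigma}$ for $\tau' \subseteq \tau$ identified in the excerpt, so the inverse limit is just the initial term $R^0_{\omega,\omega,\mathfrak{u}_\omega}$ — or rather, the limit is computed by the largest ideal, giving $k[P_{\phi,\omega}]/I$ where $I = \bigcup_\tau I^0_{\omega,\tau,\sigma}$, and since $\omega$ itself appears this union equals $I^0_{\omega,\sigma,\sigma}$, the ideal of all $m$ with $\ord_\sigma(m) > 0$ for the maximal $\sigma$, i.e. exactly $(t)$. (One must be slightly careful: it is the smallest thickening — the "thinnest" stratum, corresponding to $\tau$ maximal — that dominates in the inverse limit, so I should double-check the variance, but in either reading the net effect is that $R^0_\omega$ is $k[P_{\phi,\omega}]$ modulo the order-$>0$ part, which is $(t)$.)

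The third step is to pass from $P_{\phi,\omega}$ back to $P_{\phi,x}$. By construction $P_{\phi,\omega} = j(P_{\phi,x})$ for the parallel-transport inclusion $j\colon P_{\phi,x}\hookrightarrow \mathcal{P}_{\phi,y}$ along a path in the simply connected interior of a maximal cell $\sigma$, and this $j$ is a monoid isomorphism onto its image; so $k[P_{\phi,\omega}] \cong k[P_{\phi,x}]$ canonically, and this isomorphism carries $t$ to $t$. Combining, $R^0_\omega \cong k[P_{\phi,x}]/(t)$, which is what we want. Here I would also note that the result is independent of the choice of $x \in \Int(\omega)\cap B_0$ precisely because that interior (intersected with $B_0$) is connected, so parallel transport gives a canonical identification of the monoids $P_{\phi,x}$ for different $x$ — this is exactly the fact cited earlier that $P_{\phi,x}$ is chart-independent.

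**Main obstacle.** The routine part is unwinding definitions; the one genuinely delicate point is handling the \emph{localisations} at slab functions when $\omega$ or $\tau$ is an edge meeting $\Delta$. One must check that inverting $f_\tau$ does not change the reduction mod $t$ — equivalently, that $f_\tau$ reduces to a unit (or to a regular element that is already inverted) in $k[P_{\phi,x}]/(t)$. This follows because the slab function has order zero with respect to $\rho$ and its order-zero reduction is, after the trivial-gluing normalisation of Proposition~\ref{prop:no_glue}, a monomial $z^{m^\rho}$ times a polynomial in that monomial with unit constant term — but spelling out that the localisation commutes with the quotient and yields nothing new is the step that needs the most care, and I would lean on the corresponding verifications in \cite{TropGeom} (Section 6.2.6) together with Proposition~\ref{prop:no_glue} to keep it short.
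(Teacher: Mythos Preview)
Your proposal has a genuine gap: you treat the inverse limit as if it were governed solely by the change-of-strata surjections, and you never address the change-of-chamber maps. But the inverse system $\varprojlim_{\omega\subseteq\tau} R^0_{\omega,\tau,\mathfrak{u}_\tau}$ is not a chain of surjections; when several maximal cells $\sigma_1,\sigma_2,\ldots$ contain $\omega$, the limit is a fibre product over the rings $R^0_{\omega,\tau,\mathfrak{u}_\tau}$ for $1$-cells $\tau$, and the map from the $\sigma_2$-side to $R^0_{\omega,\tau,\mathfrak{u}_\tau}$ involves a change-of-chamber automorphism $\theta_{\mathfrak{u}_2,\mathfrak{u}_1}$ in which the slab function $f_\tau$ appears as $z^m\mapsto f_\tau^{\langle n_0,\bar m\rangle}z^m$. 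This is where the slab function actually enters, not via the localisation.

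Your proposed fix for the localisation --- that $f_\tau$ reduces to a unit mod $(t)$ --- is false. On a $1$-cell $\tau$ carrying singularities, $f_\tau$ is a genuine polynomial in $z^{m_\tau}$ with zeros at the images of the singular points; its order-zero reduction still has those zeros and is not a unit in $k[\Lambda_\tau]$. So this line of argument cannot work.

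The paper's argument is different and short. First, every ray has strictly positive order, so at $k=0$ there are no rays and chambers coincide with the maximal cells of $\mathscr{P}$. Second, and this is the key point you are missing, for a $1$-cell $\tau$ the ideal $I^0_{\omega,\tau,\sigma}$ already kills every monomial $z^m$ with $\bar m\notin\Lambda_\tau$; the only nonzero elements of $R^0_{\omega,\tau,\sigma}$ are those with $\bar m$ parallel to $\tau$, hence $\langle n_0,\bar m\rangle=0$ and the change-of-chamber map $\theta$ acts as the identity. With $\theta$ trivial the fibre product is exactly the untwisted toric one, and the localisation at $f_\tau$ is irrelevant because the images from either side already land in the unlocalised subring. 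The inverse limit therefore recovers the Stanley--Reisner picture, i.e.\ $k[P_{\phi,x}]/(t)$.
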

\begin{proof}
We give a brief outline of the proof from Lemma 6.30 of \cite{TropGeom}:
\begin{enumerate}
\item As all scattering diagrams are trivial we assume that chambers coincide with maximal cells of $\mathscr{P}$.
\item There are no non-trivial change of chamber maps since the only non-zero elements of $R^0_{\omega,\tau,\sigma}$ for one-dimensional $\tau$ are parallel to $\tau$.
\item Thus the inverse system is just the one made up of all the canonical change of strata maps, and so we recover the toric picture as if there were no scattering.
\end{enumerate}
\end{proof}

The proof of flatness of $U^k_\omega$ over $S_k$ is divided into three parts of increasing complexity, depending on the dimension of the stratum $\omega$.

\subsection{Codimension 0}
For $U^k_\omega$ with $\omega$ two-dimensional we necessarily have that $\sigma_{\mathfrak{u}_\omega} = \omega$. Thus $P_{\phi,\omega,\sigma} = \Lambda_x \times \N$ and $U^k_\omega = U^0_k \times S_k$, i.e.~a trivial deformation.

\subsection{Codimension 1}
For $U^k_\omega$ with $\omega$ one-dimensional we compute an explicit fiber product and show that this is flat.  Following \cite{TropGeom, GS1, Invitation} we fix a one-dimensional $\omega$ and let $\sigma_{\pm}$ be the maximal cells containing $\omega$.  We assume that the piecewise linear function $\phi$ has slope zero on $\sigma_-$ and slope $l\breve{d}_\omega$ on $\sigma_+$; here $\breve{d}_\omega$ is primitive.

There are three rings over which we shall compute the fiber product: $R_{\pm} = R^k_{\omega, \sigma_{\pm},\mathfrak{u}_{\sigma_{\pm}}}$ and $R_\cap = R^k_{\omega \omega \mathfrak{u}_{\sigma_+}}$ - observe the choice of $\sigma_+$ made in defining $R_{\cap}$.  We now define:
\[
f_\omega := f_{\omega,x} \prod_{(\d,x)}{f_{\d,x}}
\]
and regard this as lying in $k[\Lambda_\omega][t]$.  Lemma 6.33 of \cite{TropGeom} then implies that:

\begin{lem}\label{lem:cod1}
The fiber product $R_- \times_{R_{\cap}} R_+$ is isomorphic to the ring
\[
R_{\cup} = k\left[\Lambda_\omega\right][U,V,t]/\left(UV-f_\omega t^l,t^{k+1}\right)
\]
\end{lem}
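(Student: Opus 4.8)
The plan is to make the three rings and the two gluing maps completely explicit in coordinates adapted to $\omega$, and then to exhibit $R_\cup$ together with an isomorphism onto the fibre product; this is the codimension-one computation of \cite{TropGeom} (Lemma~6.33), and the only point requiring attention is that in our setting $f_\omega$ is a product of a slab function with the functions of \emph{several} rays crossing $\omega$, rather than a single ``simple'' factor.

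First I would fix $x\in\Int(\omega)\cap B_0$ and a basis $m_0,n_0$ of $\Lambda_x$ with $m_0$ a generator of $\Lambda_\omega$ and $n_0$ pointing into $\sigma_+$, normalised so that $\phi$ has slope $0$ on $\sigma_-$ and slope $l\breve{d}_\omega$ on $\sigma_+$ with $\langle\breve{d}_\omega,n_0\rangle=1$. Writing $t=z^{(1,0)}$, $U=z^{(0,-n_0)}$, $V=z^{(l,n_0)}$ and $z_\omega^{\pm1}=z^{(0,\pm m_0)}$, a direct check on monomials identifies $k[P_{\phi,\omega}]$ with $k[\Lambda_\omega][U,V,t]/(UV-t^l)$, and one records the orders $\ord_{\sigma_-}(U)=\ord_{\sigma_+}(V)=0$, $\ord_{\sigma_+}(U)=\ord_{\sigma_-}(V)=l$, $\ord(t)=1$, $\ord(z_\omega)=0$, so that $\ord_\omega=\max(\ord_{\sigma_-},\ord_{\sigma_+})$. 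No localisation enters the definition of $R_\pm$, and $R_\cap$ is $k[P_{\phi,\omega}]/I^k_{\omega\omega\sigma}$, in which the slab function $f_{\omega,x}$ (a polynomial in $z_\omega^{\pm1}$ and $t$ with invertible leading coefficient) is already a unit; thus $R_\pm$ is the span of the monomials with $\ord_{\sigma_\pm}\le k$ and $R_\cap$ that of the monomials with $\ord_\omega\le k$. The two structure maps are the change-of-strata surjections $R_\pm\twoheadrightarrow R^k_{\omega\omega\mathfrak{u}_{\sigma_\pm}}$ (available because $I^k_{\omega\sigma_\pm\sigma}\subseteq I^k_{\omega\omega\sigma}$), composed on the $\sigma_-$ side with the change-of-chamber isomorphism $\theta_{\mathfrak{u}_{\sigma_-},\mathfrak{u}_{\sigma_+}}$ of Section~\ref{sec:structures}, which by its defining formula sends $z^m\mapsto z^m f_\omega^{\langle\breve{d}_\omega,\bar m\rangle}$, where $f_\omega=f_{\omega,x}\prod_{(\d,x)}f_{\d,x}$ is the product over the rays crossing $\omega$ at $x$; in particular $\theta$ fixes $t$ and $z_\omega$ and sends $U\mapsto f_\omega^{-1}U$, $V\mapsto f_\omega V$.

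With this in place the argument is the bookkeeping of \cite{TropGeom}: define $\Phi\colon R_\cup\to R_-\times_{R_\cap}R_+$ by $t\mapsto(t,t)$, $z_\omega\mapsto(z_\omega,z_\omega)$, $U\mapsto(f_\omega U,\,U)$ and $V\mapsto(V,\,f_\omega V)$. Since $f_\omega\in k[\Lambda_\omega][t]$ it is fixed by $\theta$, so applying $\theta$ to the $\sigma_-$ component of the image of $U$ (resp.\ $V$) returns the $\sigma_+$ component, i.e.\ $\Phi$ really lands in the fibre product; moreover $\Phi(UV)=(f_\omega t^l,f_\omega t^l)=\Phi(f_\omega t^l)$ and $\Phi(t^{k+1})=0$, so $\Phi$ is a well-defined ring homomorphism. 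For bijectivity one compares $k[\Lambda_\omega][t]/(t^{k+1})$-module generators: every element of $R_-\times_{R_\cap}R_+$ splits uniquely as a part pulled back from $R_\cap$ (onto which $\Phi$ is visibly surjective, via the monomials with $\ord_\omega\le k$) together with a ``$U$-tail'' living only in $R_-$ and a ``$V$-tail'' living only in $R_+$ -- namely the monomials with $\ord_\omega>k\ge\ord_{\sigma_\pm}$ -- and these tails are exactly the images under $\Phi$ of the high powers of $U$, respectively $V$, in $R_\cup$. This identifies source and target.

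The step I expect to be the genuine obstacle -- and the only place the argument departs from \cite{TropGeom} -- is verifying that $f_\omega$ behaves correctly in the non-simple (non-locally-rigid) setting: that modulo $I^k$ the product of the slab function with the functions of the rays crossing $\omega$ may be represented by an element of $k[\Lambda_\omega][t]$, so that the relation $UV-f_\omega t^l$ even makes sense inside $k[\Lambda_\omega][U,V,t]/(UV-t^l)$, and that this element is a unit in $R_\cap$. This is exactly where one invokes that the slab function has order zero along $\omega$ and that $\mathscr{S}$ is consistent to order $k$ at every joint on the interior of $\omega$ (Section~\ref{sec:scattering}); granting this, the computation of \cite{TropGeom} applies verbatim and yields the asserted isomorphism $R_-\times_{R_\cap}R_+\cong R_\cup$.
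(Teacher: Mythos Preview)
Your proposal is correct and takes essentially the same approach as the paper, which simply refers the reader to the proof of Lemma~6.33 of \cite{TropGeom}; you have written out the explicit coordinate identification and fibre-product bookkeeping that that reference contains. You also correctly flag the one point that is new relative to \cite{TropGeom}---that in the non-locally-rigid setting $f_\omega$ is a product of several factors and one must know it can be represented in $k[\Lambda_\omega][t]$---and correctly locate its resolution in consistency at the joints on $\omega$.
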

\begin{proof}
The reader is referred to the proof of Lemma 6.33 of \cite{TropGeom}
\end{proof}

\begin{example}
Consider the local models obtained by the above procedure when $\Delta \cap \rho$ is:
\begin{enumerate}
\item one point with length 2 monodromy polytope;
\item two distinct points, each with simple monodromy.
\end{enumerate}
\noindent Applying Lemma~\ref{lem:cod1} the two cases give the following rings:
\begin{enumerate}
\item $\C[U,V,W,t]/\left(UV - t(W-a)^2, t^k\right)$
\item $\C[U,V,W,t]/\left(UV - t(W-b)(W-c), t^k\right)$
\end{enumerate}
where $a,b,c$ are parameters.

We now consider the singularities of the generic fiber of each of these families.  The first of these exhibits an ordinary double point at $(0,0,a,t)  \in \mathbf{A}_{U,V,W}^3\times\{t\}$, while the second ring gives a smooth affine variety.  We then see the connection between a family of affine varieties defined by varying the parameters $b,c$ and sliding two singularites of an affine structure until they coalesce.  This is precisely the behavour prohibited in \cite{TropGeom, GS1} by demanding the affine manifold be \emph{locally rigid}.
\end{example}

\subsection{Codimension 2 strata}

In \cite{TropGeom, GS1} this is by far the most difficult step.  However working with a more complicated singular locus than used in \cite{TropGeom} does not change this argument and so details of the proof are not recalled here.

As usual, the rings corresponding to the local patch at the zero-cell $\omega$ are given by the inverse limit:
\[
R^k_\omega = {\varprojlim} R^k_{\omega, \tau, \mathfrak{u}_\tau}
\]
The inverse limit is over strata $\tau \supseteq \omega$, with a choice of chamber $\mathfrak{u}_\tau$ for each stratum.  In \cite{TropGeom} it is shown that the choice of this chamber does not change the isomorphism class of the inverse limit.

\section{Local models at vertices}
\label{sec:local_models}

We wish to lift the operation of exchanging corners for singularities described in Section~\ref{sec:affine_manifolds} to a deformation of the rings we have attached to these corners in Sections~\ref{sec:structures},~\ref{sec:scattering}.  To define this deformation we will use an explicit description of the rings at the corners of $B$.  In fact we give two descriptions; the first based on gluing the rings $R^k_{\omega, \tau,\mathfrak{u}}$, the second on the canonical cover construction for surface singularities.  The equivalence of these formulations makes evident that we are constructing $\Q$-Gorenstein deformations.

\subsection{Local description of the affine manifold}

Fix a vertex $\omega$ of $\mathscr{P}$ contained in $\partial B$ and a chart $U \subseteq B$ containing $\omega$ which intersects a minimal number of strata of $\mathscr{P}$.  We shall assume for the rest of this section that:
\begin{enumerate}
\item $\mathscr{P}$ divides $U$ into two regions, described by intersecting $U$ with a pair of 2-cells $\sigma_1$,~$\sigma_2$ which meet along a 1-cell $\tau$.  
\item\label{item:rays} we have fixed a structure $\mathscr{S}$ on $B$. Let $\mathscr{S}_\omega$ be the set of rays in $\mathscr{S}$ intersecting $\omega$.
\item If $\d \in \mathscr{S}_\omega$ then $\d|_{U}$ is supported on $\tau$.
\end{enumerate}

\begin{remark}
These assumptions are automatically satisfied if $B$ is of polygon type.  Also, point~\ref{item:rays} implies that there are two distinguished chambers independent of $k$ whose boundary contains $\tau \cap U$.  We refer to these as $\mathfrak{u}_1$ and $\mathfrak{u}_2$ respectively, where we have suppressed the dependence on $k$.
\end{remark}

\noindent For ease of exposition we will assume without loss of generality that $\phi$ vanishes on the left-hand cone, i.e. on $\mathfrak{u}_1$.

\begin{notation}\ 
\begin{enumerate}
\item Each $\sigma_i$ for $i = 1$,~$2$ contains a 1-cell in $\partial B$ intersecting $\omega$. We denote these 1-cells $\tau_1$,~$\tau_2$ respectively.
\item Let $n_0$ be the unique primitive vector in $\Lambda^\star_\omega$ which annihilates the subspace defined by $\tau$ and evaluates postively on $\mathfrak{u}_1$.
\item Denote by $n_1$,~$ n_2$ the unique primitive vectors in $\Lambda^\star_\omega$ annihilating $\tau_1$,~$\tau_2$ respectively and evaluating non-negatively along $\tau$.
\item Let $f := f_\tau.\prod_{\d}{f_{\d}}$ where $f_\tau$ is the slab function on $\tau$ and the product is over rays $\d$ supported on $\tau$.
\end{enumerate}
\end{notation}

Now we have fixed this notation we describe the rings $R^k_{\omega, \rho, \mathfrak{u}_i}$ for different choices of $\rho$ and $i$.  Recalling that any such ring is a quotient of $k\left[P_{\omega,\phi}\right]$ we fix a generating set for the monoid $P_{\omega,\phi}$.  After taking the projection $m \mapsto \bar{m}$ the generators are distributed in some fashion across the two subcones:

\begin{center}
\includegraphics*[viewport=150 555 350 655]{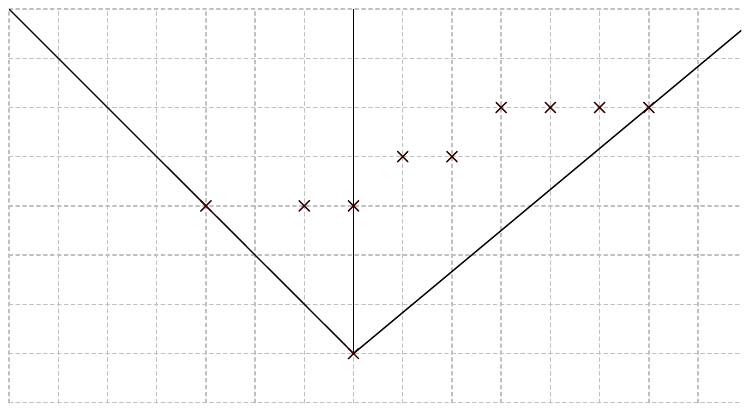}
\end{center}

We will name the generators depending on the cone they project to. $\C\left[P_{\omega,\phi}\right]$ is generated as a $\C[t]$-module by three collections of monomials:
\begin{enumerate}
\item $x_i$ correspond to generators of the left-hand cone (not supported on $\tau$). $x_0$ corresponds to a vector $m_0$ such that $\overline{m}_0 \in \tau_1$.
\item $y_j$ correspond to generators of the right-hand cone (not supported on $\tau$). $y_0$ corresponds to a vector $m_0$ such that $\overline{m}_0 \in \tau_2$.
\item $w$ is the primitive generator of $\tau$.
\end{enumerate}

We recall the standard result in toric geometry that describes the corresponding ideal.
\begin{lem}\label{lem:cls}
If $C$ is a cone in a lattice $M$ with generating set $m_1, \cdots , m_s$ there is a natural short exact sequence:
\[
0 \rightarrow L \rightarrow \Z^{s} \rightarrow M  \rightarrow 0
\]
Writing $l \in L$ via the injective map into $\Z^s$ we can write $l = \sum{l_ie_i}$; now one may form the ideal $I = \left\langle \prod_{l_i>0}{x_i^{l_i}} - \prod_{l_i<0}{x_i^{-l_i}} \right\rangle$, and $k[x_1,\ldots, x_s]/I$ is the affine toric variety $\Spec k[C]$.
\end{lem}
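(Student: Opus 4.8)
\section*{Proof proposal}

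The plan is to exhibit $\Spec k[C]$ --- by which we mean the spectrum of the semigroup algebra of the lattice points $C\cap M$ --- as the scheme-theoretic image of a monomial map from $\mathbb{A}^s$, and then to compute the kernel of the corresponding $k$-algebra homomorphism, identifying it with $I$.

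First I would set up the map $\pi\colon\Z^s\to M$, $e_i\mapsto m_i$. This is surjective by hypothesis (the $m_i$ generate $C$, hence generate $M$), and since $\Z^s$ is free we obtain the asserted short exact sequence with $L:=\ker\pi$ free of finite rank. Restricting $\pi$ to $\N^s$ gives a surjection of monoids $\N^s\twoheadrightarrow C\cap M$, because the $m_i$ generate the monoid of lattice points of $C$; this induces a surjection of $k$-algebras $\phi\colon k[x_1,\dots,x_s]\twoheadrightarrow k[C]$ with $\phi(x_i)=\chi^{m_i}$. It then suffices to prove that $\ker\phi=I$.

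The inclusion $I\subseteq\ker\phi$ is immediate: for $l\in L$ write $l=l^+-l^-$ with $l^{+},l^{-}\in\N^s$ of disjoint support, so the generator $\prod_{l_i>0}x_i^{l_i}-\prod_{l_i<0}x_i^{-l_i}=x^{l^{+}}-x^{l^{-}}$ is sent by $\phi$ to $\chi^{\sum l_i^{+} m_i}-\chi^{\sum l_i^{-} m_i}=0$, since $\sum l_i m_i=\pi(l)=0$.

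The reverse inclusion $\ker\phi\subseteq I$ is the substantive step, and the part I expect to be the main obstacle. The key observation is that whenever $a,a'\in\N^s$ satisfy $\pi(a)=\pi(a')$, their componentwise minimum $c$ satisfies $a=c+l^{+}$ and $a'=c+l^{-}$ for $l:=a-a'\in L$, whence $x^a-x^{a'}=x^{c}\bigl(x^{l^{+}}-x^{l^{-}}\bigr)\in I$. Given $f=\sum_a c_a x^a\in\ker\phi$, I would group the monomials by the value $m=\pi(a)\in C\cap M$ and choose one representative exponent $a_0(m)$ in each group; the relations just produced show $f\equiv\sum_m\bigl(\sum_{\pi(a)=m}c_a\bigr)x^{a_0(m)}\pmod I$. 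But $\phi(f)=\sum_m\bigl(\sum_{\pi(a)=m}c_a\bigr)\chi^m=0$ forces each inner sum to vanish, as the $\chi^m$ are $k$-linearly independent in $k[C]$; hence $f\in I$. Therefore $k[x_1,\dots,x_s]/I\cong k[x_1,\dots,x_s]/\ker\phi\cong k[C]$, and taking $\Spec$ identifies this ring with the affine toric variety attached to $C$, as claimed; this is the classical presentation of an affine toric variety by its toric (lattice) ideal.
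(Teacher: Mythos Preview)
Your argument is correct and is essentially the standard presentation-by-toric-ideal proof; the paper itself does not prove this lemma but simply cites \cite{CLS}, Chapter~1, where exactly this argument appears. One small point of phrasing: your parenthetical ``the $m_i$ generate $C$, hence generate $M$'' is not automatic in general (ray generators of a non-full-dimensional cone need not span the ambient lattice), but since the lemma already asserts the short exact sequence with $M$ on the right, surjectivity is part of the hypothesis and your reading of ``generating set'' as generators of the monoid $C\cap M$ is the intended one.
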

\begin{proof}
See \cite{CLS}, chapter 1.
\end{proof}

The 2-cells $\sigma_1$,~$\sigma_2$ define a pair of cones with their origin at the vertex $\omega$. Let $C_1$,~$C_2$ be the semigroups defined by the integral points of these cones respectively.  Using Lemma~\ref{lem:cls} the relations between the generators specified for the monoid $P_{\omega,\phi}$ are generated by those of the form:
\[
w^\gamma\prod{x_i^{\alpha_i}}\prod{y_j^{\beta_j}} - w^\delta\prod{x_i^{\gamma_i}}\prod{y_j^{\delta_j}}
\]
Recall that in general we have:
\[
R^k_{\omega, \sigma_1, \mathfrak{u}_1} = k\left[P_{\omega,\phi}\right]/I_{\omega,\sigma_1,\sigma_1}
\]
\noindent Now we observe that the order of a monomial $M = t^\gamma\prod{y^{\beta_j}_jw^\alpha}$ in this monoid is given by:
\[
\ord_\tau(M) =  \sum{\beta_j\phi_\omega\left(\bar{m}_j\right)} +\gamma
\]
\noindent This formula, together with the observation that over $\sigma_1$ $\ord_\tau$ is just the $t$-degree fixes an explicit description of the ideal:
\[
I_{\omega,\sigma_1,\sigma_1} = \left\langle M : \ord_{\tau}(M) > k \right\rangle
\]

\begin{remark}
We may view the ring $R^k_{\omega,\sigma_1,\mathfrak{u}_1}$ as a module over $S_k[w]$; letting $S_k[C_1]$, respectively $S_k[C_2]$ be the submodule of $k\left[P_{\omega,\phi}\right]$ generated by the $x_i$ (respectively by the $y_j$) $R^k_{\omega,\sigma_1,\mathfrak{u}_1}$ may be expressed as a pushout:

\begin{displaymath}
    \xymatrix{
    S_k[w] \ar[r] \ar[d] & S_2 \ar[d] \\
    S_1 \ar[r] & R^k_{\omega,\sigma_1,\mathfrak{u}_1} \\}
\end{displaymath}
in which $S_1 = S_k[C_1]/(t^{k+1})$ and
\[
S_2 =  S_k[C_2]/\left\langle \prod{t^\gamma y_j^{\beta_j}} : \sum{\beta_j\phi_\omega\left(\bar{m}_j\right)+\gamma} > k \right\rangle
\]
\end{remark}
\begin{definition}
For each cone $C_i$,~$i = 1,2$, let $C^\circ_i$ be the cone generated by $x_0, \cdots, x_N$, $y_0, \cdots, y_M$ respectively.

\end{definition}

\begin{lem}\label{lem:splitting}
The $S_k[w]$-module
\[
\widetilde{R}^k := S_k[C_1 \backslash \langle w\rangle] \oplus S_k[C_2 \backslash \langle w\rangle] \oplus S_k[w]
\]
is a finitely generated $S_k[w]$-module and there is a surjective homomorphism $\widetilde{R}^k \rightarrow R^k_{\omega,\sigma_1,\mathfrak{u}_1}$.
\end{lem}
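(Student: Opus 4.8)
The plan is to exhibit $\widetilde{R}^k$ as a sum of three pieces each of which is manifestly a finitely generated $S_k[w]$-module, and then define the map to $R^k_{\omega,\sigma_1,\mathfrak{u}_1}$ piece by piece, using the pushout description of the target ring from the preceding remark. First I would address finite generation. Each of $C_1$ and $C_2$ is the semigroup of integral points of a two-dimensional rational cone with one ray along $\tau$ (the $w$-direction); removing the monomials in the ideal $\langle w\rangle$ leaves a semigroup that, modulo the grading by $t$-order, is finitely generated over the subsemigroup $\langle w\rangle$ — indeed, $C_i\setminus\langle w\rangle$ is generated as a $\langle w\rangle$-module by the finitely many $x_i$ (resp. $y_j$), since $C_i$ is generated by the $x_i$'s (resp.\ $y_j$'s) together with $w$ by the choice of generating set above, and any product containing a positive power of $w$ already lies in $\langle w\rangle$. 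Passing to the quotient by $(t^{k+1})$ — equivalently working over $S_k = k[t]/(t^{k+1})$ — only shrinks things, so $S_k[C_i\setminus\langle w\rangle]$ is a finitely generated $S_k[w]$-module; and $S_k[w]$ is trivially one over itself. A direct sum of three finitely generated modules is finitely generated, giving the first assertion.

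For the surjection, I would use the pushout square in the remark: $R^k_{\omega,\sigma_1,\mathfrak{u}_1}$ is the pushout of $S_1 \leftarrow S_k[w] \rightarrow S_2$, where $S_1 = S_k[C_1]/(t^{k+1})$ and $S_2$ is the stated quotient of $S_k[C_2]$. Concretely this means $R^k_{\omega,\sigma_1,\mathfrak{u}_1}$ is generated as an $S_k[w]$-algebra — and in fact, after collapsing products into the $w$-part or into the ideals, as an $S_k[w]$-module — by the images of the monomials $x_i$ and $y_j$ together with powers of $w$. So I would define $\widetilde{R}^k \to R^k_{\omega,\sigma_1,\mathfrak{u}_1}$ on the three summands: on $S_k[w]$ send $w \mapsto w$; on $S_k[C_1\setminus\langle w\rangle]$ send each monomial in the $x_i$ to its image under $S_1 \to R^k_{\omega,\sigma_1,\mathfrak{u}_1}$; on $S_k[C_2\setminus\langle w\rangle]$ send each monomial in the $y_j$ to its image under $S_2 \to R^k_{\omega,\sigma_1,\mathfrak{u}_1}$. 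These are $S_k[w]$-module maps by construction, and their combined image contains every monomial generator of $R^k_{\omega,\sigma_1,\mathfrak{u}_1}$ — any monomial of $P_{\omega,\phi}$ is, via the toric relations of Lemma~\ref{lem:cls}, expressible in terms of the chosen generators $x_i, y_j, w$, and any such expression, when it has nonzero image, can be rewritten to lie in one of the three summands (a pure power of $w$, or a product of $x$'s, or a product of $y$'s) because a mixed product $\prod x_i \prod y_j$ with both factors non-empty spans a two-dimensional subcone whose relations force it, modulo $I_{\omega,\sigma_1,\sigma_1}$, to reduce — this is exactly the content of the pushout. Hence the map is surjective.

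The main obstacle I expect is the bookkeeping in the last point: verifying that the three summands of $\widetilde{R}^k$ really do hit \emph{all} of $R^k_{\omega,\sigma_1,\mathfrak{u}_1}$, i.e. that no genuinely mixed monomial $\prod x_i^{\alpha_i}\prod y_j^{\beta_j}$ with both products nontrivial survives outside the span of the pure-$x$, pure-$y$, and pure-$w$ monomials. This needs the explicit toric relations $w^\gamma\prod x_i^{\alpha_i}\prod y_j^{\beta_j} - w^\delta\prod x_i^{\gamma_i}\prod y_j^{\delta_j}$ together with the order computation $\ord_\tau(M) = \sum\beta_j\phi_\omega(\bar m_j) + \gamma$: whenever a monomial involves generators from both cones, the relation lets one trade the mixed part for a pure power of $w$ up to terms whose $\tau$-order exceeds $k$, hence which vanish in $R^k_{\omega,\sigma_1,\mathfrak{u}_1}$. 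Making this reduction precise — essentially a Gröbner-type argument internal to the monoid algebra — is the one place requiring care; everything else is formal.
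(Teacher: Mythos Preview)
Your approach and the paper's are close in spirit, but your surjectivity argument takes an unnecessary detour. You think of $R^k_{\omega,\sigma_1,\mathfrak{u}_1}$ as a quotient of $k[P_{\omega,\phi}]$ and therefore worry about ``mixed'' monomials $\prod x_i^{\alpha_i}\prod y_j^{\beta_j}$, proposing a Gr\"obner-type reduction to pure-$x$, pure-$y$, or pure-$w$ monomials. The paper avoids this entirely by working directly with the pushout description: an element of the pushout is by definition a compatible pair $(u_1,u_2)$ with $u_1 \in S_1$ and $u_2 \in S_2$, so $u_1$ is already a sum of monomials in the $x_i$ and $w$ only, and $u_2$ a sum of monomials in the $y_j$ and $w$ only. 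Stripping the pure-$w$ terms from each $u_i$ immediately exhibits the element as a triple in the image of $\widetilde{R}^k$. No reduction of mixed monomials is needed because none appear in this presentation.

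There is also a genuine slip in your finite-generation argument: you claim that $C_i\setminus\langle w\rangle$ is generated as a $\langle w\rangle$-module by the finitely many $x_i$. This is false --- the products $x_i x_j$, $x_i^2$, etc., are not $w$-multiples of a single $x_i$. What is true is that $S_k[C_i\setminus\langle w\rangle]$ is a quotient of the $S_k[w]$-\emph{algebra} $S_k[w][C^\circ_i]$, which is finitely generated as an algebra by the $x_i$; this is how the paper phrases it (via the surjection $S_k[w][C^\circ_i] \to S_k[C_i\setminus\langle w\rangle]$). The statement of the lemma says ``module'', which is itself somewhat loose, but the only use made of the lemma downstream is to obtain an explicit $S_k[w]$-spanning set (monomials in the $x_i$, monomials in the $y_j$, and powers of $w$) for the surjectivity argument in Proposition~\ref{prop:ring_iso}, and for that purpose the algebra statement suffices.
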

\begin{proof}
Observe that the rings $S_k[C_i \backslash \langle w\rangle]$ are finitely generated $S_k[w]$-modules since there are canonical surjective homomorphisms: $S_k[w][C^\circ_i] \rightarrow S_k[C_i \backslash \langle w\rangle]$ for $i=1,2$.  Each factor of $\widetilde{R}^k$ has a canonical map to a term of the push-out diagram above, together defining a map to $R^k_{\omega,\sigma_1,\mathfrak{u}_1}$.  Using this push-out and fixing an element of $R^k_{\omega,\sigma_1,\mathfrak{u}_1}$ it may be expressed as a pair $(u_1,u_2)$; in which $u_i$ is a sum of monomials from $\sigma_i$ for $i=1,2$.  After removing terms involving only the variable $w$ from each $u_i$ we may express any element of $R^k_{\omega,\sigma_1,\mathfrak{u}_1}$ as a triple of the form required.
\end{proof}

We remark that analogous observations may be made about the rings $R^k_{\omega, \sigma_2,\mathfrak{u}_2}$ and $R^k_{\omega, \tau,\mathfrak{u}_1}$. Using this notation we now describe the co-ordinate ring of the affine patch containing the given vertex, that is the inverse limit of the following system.

\begin{displaymath}
    \xymatrix{
    & & R^k_{\Pi} \ar@{.>}[dl] \ar@{.>}[dr] \ar @{.>} @/_2pc/[ddll] \ar @{.>} @/^2pc/[ddrr] \ar@{.>}[dd] \ar@{.>} @/^2pc/[ddd] & & \\
    & R^k_{\omega, \sigma_1,\mathfrak{u}_1} \ar[dl] \ar[dr] & & R^k_{\omega,\sigma_2,\mathfrak{u}_2} \ar[dl] \ar[dr] & \\
    R^k_{\omega, \tau_1,\mathfrak{u}_1} \ar[drr]  & & R^k_{\omega, \tau,\mathfrak{u}_1} \ar[d] &  & R^k_{\omega, \tau_2,\mathfrak{u}_2} \ar[dll] \\
       & & R^k_{\omega,\omega,\mathfrak{u}_1} & & }
\end{displaymath}

\begin{remark}
The inverse limit described above is manifestly isomorphic to the fiber product:
\begin{displaymath}
\xymatrix{
 R^k_{\Pi} \ar@{.>}[r] \ar@{.>}[d] & R^k_{\omega, \sigma_1,\mathfrak{u}_1} \ar[d] \\
  R^k_{\omega,\sigma_2,\mathfrak{u}_2} \ar[r] &  R^k_{\omega, \tau,\mathfrak{u}_1}  }
\end{displaymath}
\end{remark}

If $u \in R^k_{\Pi}$, $u = \left(u_1,u_2\right)$ and the restrictions of $u_i$ to $R^k_{\omega,\tau,\mathfrak{u}_i}$ for $i = 1,2$ respectively are related by the change of chamber map.  Formally, we take the change of strata maps and compose the second with the change of chamber map:
\begin{displaymath}
\xymatrix{
 R^k_{\omega,\sigma_1,\mathfrak{u}_1} \ar[d] & R^k_{\omega, \sigma_2,\mathfrak{u}_2} \ar[d] \\
  R^k_{\omega,\tau,\mathfrak{u}_1} &  R^k_{\omega, \tau,\mathfrak{u}_2} \ar@{-->}[l]^{\theta_{\mathfrak{u}_2, \mathfrak{u}_1}}}
\end{displaymath}

Recall the following facts:
\begin{enumerate}
\item Applying the change of chamber isomorphism $\theta_{\mathfrak{u}_2, \mathfrak{u}_1}$ to variables $x_i$, we have that: $\theta_{\mathfrak{u}_2, \mathfrak{u}_1}\left(x_i\right) = f^{\left\langle n_0,\bar{m}\right\rangle}x_i$. 
\item  There is a similar formula for the $\theta_{\mathfrak{u}_2, \mathfrak{u}_1}(y_j)$ and $w$ is always mapped to itself, as $n_0$ annihilates the tangent space to $\tau$.
\item The rings $R^k_{\omega,\tau,\mathfrak{u}_i}$,~$i=1,2$ have been localised at the slab function, ensuring that change of chamber map is an isomorphism.
\end{enumerate}

We are now in a position to give an elementary description of the formal smoothing of the affine chart at a boundary vertex obtained from the Gross--Siebert reconstruction algorithm.
\begin{definition}\label{def:icup}
$R^k_{\cup} = S_k[X_i, Y_j, W, t : 0 \leq i \leq N, 0 \leq j \leq M ]/I_{\cup}$.
To define $I_\cup$ consider each binomial relation
\[
w^{\eta_1}\prod_{i,j}{x^{\alpha_i}_iy^{\beta_j}_j} = t^\chi w^{\eta_2}\prod_{k,l}{x^{\gamma_k}_ky^{\delta_l}_l}
\]
in the usual monoid over $\phi$ on $C_1 \cup C_2$. We define an element of $I_{\cup}$ which may take one of two forms; if the monomials correspond to a lattice vector in $C_1$ consider the polynomial
\[
f^{-\sum_l{\delta_l\left\langle n_0, m_l\right\rangle}}W^{\eta_1}\prod_{i,j}{X^{\alpha_i}_iY^{\beta_j}_j} - f^{-\sum_j{\beta_j\left\langle n_0, m_j\right\rangle}}t^\chi W^{\eta_2}\prod_{k,l}{X^{\gamma_k}_kY^{\delta_l}_l}
\]
otherwise, if it is over $C_2$, consider the polynomial
\[
f^{\sum_k{\gamma_k\left\langle n_0, m_k\right\rangle}}W^{\eta_1}\prod_{i,j}{X^{\alpha_i}_iY^{\beta_j}_j} - f^{\sum_i{\alpha_i\left\langle n_0, m_i\right\rangle}}t^\chi W^{\eta_2}\prod_{k,l}{X^{\gamma_k}_kY^{\delta_l}_l}
\]
Here $f$ is considered as an element of $S_k\left[W\right]$ (rather than $S_k\left[w\right]$).  Divide out the given polynomial by as many factors of $f$ as possible and append it to the generating set of $I_\cup$.  For clarity we shall suppress the $W^{\eta_i}$ in these relations from now on.
\end{definition}

\begin{prop}\label{prop:ring_iso}
There is a ring isomorphism $\Phi\colon R^k_{\cup} \rightarrow R^k_{\Pi}$ given on generators by:
\begin{align*}
 X_i &\mapsto \left(x_i,f^{\left\langle n_0, m_i \right\rangle}x_i\right) \\
 Y_j &\mapsto \left(f^{\left\langle n_0, m_j \right\rangle}y_j,y_j\right) \\
 W &\mapsto \left(w,w\right) \\
 t &\mapsto \left(t,t\right)
\end{align*}
\end{prop}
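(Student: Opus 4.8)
The plan is to verify that $\Phi$ is a well defined ring homomorphism and then to write down an explicit two–sided inverse, with surjectivity and injectivity both read off the $S_k[W]$–module splitting of Lemma~\ref{lem:splitting}. The first thing I would do is record a concrete model for the target. As explained just above, $R^k_{\Pi}$ is the fibre product of $R^k_{\omega,\sigma_1,\mathfrak u_1}$ and $R^k_{\omega,\sigma_2,\mathfrak u_2}$ over $R^k_{\omega,\tau,\mathfrak u_1}$, where the map out of the $\sigma_2$–factor is the change of strata followed by $\theta_{\mathfrak u_2,\mathfrak u_1}$. Applying Lemma~\ref{lem:splitting} to each of the two factors writes every element as a pair $(u_1,u_2)$ with each $u_i$ split into an ``$x$–part'', a ``$y$–part'' and a ``$w$–part''. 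Using $\theta_{\mathfrak u_2,\mathfrak u_1}(z^m)=f^{\langle n_0,\bar m\rangle}z^m$, the fact that $\langle n_0,\cdot\rangle$ kills the tangent direction of $\tau$ (so $w\mapsto w$), and the invertibility of the slab and ray functions in $R^k_{\omega,\tau,\mathfrak u_i}$, the matching condition in the fibre product becomes the statement that the $x$–part of $u_2$ is obtained from the $x$–part of $u_1$ by multiplying each monomial by the relevant power of $f$, and dually for the $y$–parts. This exhibits $R^k_{\Pi}$, as an $S_k[W]$–module, as a direct sum of a ``left'', a ``right'' and a ``$w$'' summand, matching the three summands of $\widetilde R^k$ in Lemma~\ref{lem:splitting} term by term.

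Next I would check that $\Phi$ is well defined, i.e.\ that each generator of $I_{\cup}$ maps to $0$. Take a binomial relation $w^{\eta_1}\prod x_i^{\alpha_i}y_j^{\beta_j}=t^{\chi}w^{\eta_2}\prod x_k^{\gamma_k}y_l^{\delta_l}$ of $k[P_{\phi,\omega}]$ whose two monomials project into $C_1$. Applying $\Phi$ to the associated element of $I_{\cup}$ and reading the first coordinate returns this relation up to an overall power of $f$, which holds already in $R^k_{\omega,\sigma_1,\mathfrak u_1}$; reading the second coordinate returns the relation multiplied by $f^{\sum_i\alpha_i\langle n_0,m_i\rangle}$ on one side and $f^{\sum_k\gamma_k\langle n_0,m_k\rangle}$ on the other, and these two exponents agree because the relation projects to an identity of lattice vectors and $\langle n_0,\bar w\rangle=0$. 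The $f$–twists built into Definition~\ref{def:icup} are exactly those needed for the $f$–powers introduced by $\Phi$ to cancel between the two sides of the relation; the case of a relation over $C_2$ is symmetric, swapping $\sigma_1\leftrightarrow\sigma_2$ and $X_i\leftrightarrow Y_j$ and transporting the relation back through the isomorphism $\theta_{\mathfrak u_2,\mathfrak u_1}$. Finally, the constant term of $f$ is a unit (it is the value at the vertex of the slab function, which is nonzero because the singular locus misses the vertices), so $f$ is a non–zero–divisor on $S_k[W]$ and hence on the free summands of $R^k_{\Pi}$ from the first step; therefore cancelling the maximal power of $f$, as in Definition~\ref{def:icup}, preserves the vanishing, and $\Phi$ descends to $R^k_{\cup}$.

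It then remains to see that $\Phi$ is bijective. Surjectivity is immediate from the first step, since $\Phi(X_i)$, $\Phi(Y_j)$, $\Phi(W)$ and $t$ generate the monomials spanning $R^k_{\Pi}$ over $S_k$. For injectivity I would construct the inverse $\Psi\colon R^k_{\Pi}\to R^k_{\cup}$ directly from that description: send a reduced $x$–monomial of $u_1$ to the corresponding monomial in the $X_i$, a reduced $y$–monomial of $u_2$ to the corresponding monomial in the $Y_j$, and a $w$–power to the corresponding power of $W$, and extend $S_k$–linearly; the matching condition makes $\Psi$ well defined, and a monomial–by–monomial computation shows it is a ring homomorphism with $\Psi\circ\Phi=\mathrm{id}$ and $\Phi\circ\Psi=\mathrm{id}$. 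Equivalently, one checks that $\Phi$ carries the splitting $\widetilde R^k$ of Lemma~\ref{lem:splitting} isomorphically onto the splitting of $R^k_{\Pi}$ found in the first step.

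I expect the only genuine difficulty to lie in the well–definedness step: one has to pin down which lift in $\mathcal P_{\phi,\omega}$ — equivalently, which power of $t$ — is attached to each of the generators $x_i$, $y_j$, $w$ in each of the two factors, and the sign of $\langle n_0,\cdot\rangle$ on these generators, in order to see that the $f$–twists prescribed in Definition~\ref{def:icup} match, on the nose, the $f$–twists produced by $\theta_{\mathfrak u_2,\mathfrak u_1}$. Once the conventions are fixed this is a finite, if slightly fiddly, computation; the remaining steps are formal consequences of Lemma~\ref{lem:splitting} and the fibre–product description of $R^k_{\Pi}$.
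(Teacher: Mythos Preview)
Your proposal is correct and follows essentially the same architecture as the paper: describe $R^k_{\Pi}$ via the fibre product and the splitting of Lemma~\ref{lem:splitting}, verify well-definedness by pushing the generators of $I_\cup$ through $\Phi$ and watching the $f$-powers cancel, and obtain surjectivity from the splitting.

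The one place you diverge is injectivity. The paper does not build an inverse $\Psi$; instead it observes that the relations in $I_\cup$ of the shape $\prod X_i^{\alpha_i}Y_j^{\beta_j}-f^{\bullet}\prod X_k^{\gamma_k}$ (or the $Y$-analogue) allow every element of $R^k_\cup$ to be rewritten with no mixed $X_iY_j$ monomials, and on such representatives $\Phi$ is literally the identity onto one of the two factors, so an element with image in both $I^k_{\omega,\sigma_i,\mathfrak u_i}$ must already lie in $I_\cup$. Your route via an explicit $\Psi$ is equivalent in content but costs you the extra verification that $\Psi$ is multiplicative, which is not obviously lighter than the normal-form argument. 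Either way the essential idea---that the splitting furnishes a canonical normal form on which $\Phi$ is transparently bijective---is the same.
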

\begin{remark}
Compare with the description around an interior 1-cell given in \cite{TropGeom}.  These rings are more complicated but the change of chamber map in the fiber product is essentially the same. 
\end{remark}
\begin{proof}

To show this map is well-defined we consider the images under $\Phi$ of the generators of $I_\cup$. Indeed, we may simply compute $\Phi$:
{\footnotesize
\begin{align*}
 & \Phi\left(f^{\sum_k{\gamma_k\left\langle n_0, m_k\right\rangle}}\prod_{i,j}{X^{\alpha_i}_iY^{\beta_j}_j} \right) = \\
 = & f^{\sum_k{\gamma_k\left\langle n_0, m_k\right\rangle}}\left(f^{-\sum{\beta_j\left\langle n_0,m_j\right\rangle}}\prod_{i,j}x^{\alpha_i}_iy^{\beta_j}_j,f^{\sum{\alpha_i\left\langle n_0,m_i\right\rangle}}\prod_{i,j}{x^{\alpha_i}_iy^{\beta_j}_j}\right) \\ 
 = & f^{\sum_k{\gamma_k\left\langle n_0, m_k\right\rangle} + \sum_i{\alpha_i\left\langle n_0, m_i\right\rangle}}\left(f^{-\sum{\beta_j\left\langle n_0,m_j\right\rangle} - \sum{\alpha_i\left\langle n_0,m_i\right\rangle}}\prod_{i,j}x^{\alpha_i}_iy^{\beta_j}_j,\prod_{i,j}{x^{\alpha_i}_iy^{\beta_j}_j}\right) \\
 = &  f^{\sum_k{\gamma_k\left\langle n_0, m_k\right\rangle} + \sum_i{\alpha_i\left\langle n_0, m_i\right\rangle}}\left(f^{-\sum{\delta_l\left\langle n_0,m_l\right\rangle} - \sum{\gamma_k\left\langle n_0,m_k\right\rangle}}\prod_{k,l}{x^{\gamma_k}_ky^{\delta_l}_l},\prod_{k,l}{x^{\gamma_k}_ky^{\delta_l}_l}\right) \\
 = & f^{\sum_i{\alpha_i\left\langle n_0, m_i\right\rangle}}\left(f^{-\sum{\delta_l\left\langle n_0,m_l\right\rangle}}\prod_{k,l}{x^{\gamma_k}_ky^{\delta_l}_l},f^{\sum_k{\gamma_k\left\langle n_0, m_k\right\rangle}}\prod_{k,l}{x^{\gamma_k}_ky^{\delta_l}_l}\right) \\
 = & \Phi\left(f^{\sum_i{\alpha_i\left\langle n_0, m_i\right\rangle}}\prod_{k,l}{X^{\gamma_k}_kY^{\delta_l}_l} \right)
\end{align*}}

To show $\Phi$ is surjective we use Lemma~\ref{lem:splitting}, which gives a generating set for the algebras $R^k_{\omega,\sigma_i,\mathfrak{u}_i}$ as $S_k\left[w\right]$ modules.  Fix an element $\left(u_1,u_2\right) \in R^k_\Pi$, without loss of generality we assume that there are no terms in $u_i, i=1,2$ involving only $w$ as any polynomial $g(w)$ may be accounted for by taking $\Phi(g(W))$.  Now we (non-uniquely) write $u_1 = \sum_k{c_k\prod_i{x^{\alpha_{i,k}}_i}} + h_1\left(y_j : 0 \leq j \leq M \right)$ where the coefficents $c_m$ lie in the ring $S_k[w]$.  Similarly we write $u_2 = \sum_l{c_l\prod_j{y^{\beta_{j,l}}_j}} + h_2\left(x_i : 0 \leq i \leq N \right)$ using the same coefficent ring.

We claim that the pair $(u_1,u_2)$ is in $R^k_{\Pi}$ if and only if it is equal to:
\[
\Phi\left(\sum_k{c_k\prod_i{x^{\alpha_{i,k}}_i}} + \sum_l{c_l\prod_j{y^{\beta_{j,l}}_j}}\right)
\]
By the previous calculation this is certainly in the fiber product; furthermore this element agrees with all the $x_i$ terms in $f_1$ and the $y_j$ terms in $f_2$ by definition.  All that remains is to check that this uniquely determines the $h_1$ and $h_2$. However the change of strata map is the identity on $h_1$ and $h_2$ and so we may express these in terms of previously determined quantities, for example:
\[
h_1 = \theta_{\mathfrak{u_2},\mathfrak{u_1}}\psi_{\left(\omega,\sigma_2\right),\left(\omega,\tau\right)}\left(\sum_l{c_l\prod_j{y^{\beta_{j,l}}_j}}\right)
\]
We next show that this map is injective.  Assume we have a element $u \in R^k_\cup$ that is mapped to a pair $(u_1,u_2)$ such that $u_1 \in I^k_{\omega,\sigma_1,\mathfrak{u}_1}$ and $u_2 \in I^k_{\omega,\sigma_2,\mathfrak{u}_2}$.  Observe that we may rewrite any monomial $\prod_{i,j}x_i^{\alpha_i}y_j^{\beta_j}$, using the toric relations, in one of the following two forms:
\begin{enumerate}
\item $\prod_{i,j}x_i^{\alpha_i}y_j^{\beta_j} - \prod_k{x_k^{\gamma_k}}$
\item $\prod_{i,j}x_i^{\alpha_i}y_j^{\beta_j} - \prod_l{y_l^{\delta_l}}$
\end{enumerate}
From Definition~\ref{def:icup} we have a relations in $I_\cup$ of the form:
\begin{enumerate}
\item $\prod_{i,j}X_i^{\alpha_i}Y_j^{\beta_j} - f^{\sum{\alpha_i\left\langle n_0, m_i\right\rangle}}\prod_k{X_k^{\gamma_k}}$
\item $\prod_{i,j}X_i^{\alpha_i}Y_j^{\beta_j} - f^{-\sum{\beta_j\left\langle n_0, m_j\right\rangle}}\prod_l{Y_l^{\delta_l}}$
\end{enumerate}
Thus we can assume that there are no terms involving both the $X_i$ and the $Y_j$ appearing in a representative of $R^k_\cup$, but by Proposition~\ref{prop:ring_iso} $\Phi$ is the identity onto one of the two factors.  Since the image onto this factor is in $I^k_{\omega,\sigma_i,\mathfrak{u}_i}$ for some $i$ we may infer that the original element is in $I_\cup$.
\end{proof}

\subsection{Orbifolding the local models}
We conclude this section by exhibiting a construction of the canonical cover for these rings; this will be used in the next section to construct a $\Q$-Gorenstein deformation.

Given a vertex $v \in B$ fix a chart of $B$ containing $v$ and let $C$ denote the tangent cone at $v$. We shall assume for the remainder of this section that
\begin{enumerate}
\item $\mathscr{P}$ splits $C$ into two cones $C_i$~,$i=1,2$, divided by a ray $L$.
\item Denoting the primitive generators of $C$ by $v_1$,~$v_2$ respectively we have that $v_1+v_2 \in L$.
\end{enumerate}

\begin{lem}\label{lem:local-polygons}
Given a Fano polygon $P$ fix a vertex $v$, its tangent wedge $C$ and the ray $L$ of the spanning fan of $Q$ meeting $v$. The pair $(C,L)$ satisfies the two conditions above. 
\end{lem}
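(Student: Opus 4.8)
The plan is to reduce everything to the explicit coordinates used in the proof of the singularity-content comparison above, after which both conditions fall out of a one-line computation. Recall $Q = P^\circ$, and that a vertex $v$ of $Q$ is polar-dual to an edge $e$ of $P$.

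First I would, after a $\GL_2(\Z)$ change of coordinates in $N$ as used above, assume that the vertices of $e$ are $(a_1,-h)$ and $(a_2,-h)$ with $a_1 < a_2$ and $h \ge 1$. Since $P$ is Fano its vertices are primitive, so $\gcd(a_i,h)=1$ for $i=1,2$; consequently the two directions $v_1 := (-h,-a_1)$ and $v_2 := (h,a_2)$ of the edges of $Q$ through $v = (0,-1/h)$ are primitive integral vectors, hence are exactly the primitive generators of the two rays bounding the tangent wedge $C$. The distinguished $1$-cell $L$ is the vertical ray, and its primitive generator, pointing from $v$ towards the origin, is $\ell = (0,1)$.

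Next I would verify the two conditions. For the first, the point is that $\mathscr{P}$ is the spanning fan of $Q$ restricted to $Q$, so the cells of $\mathscr{P}$ meeting $v$ are precisely $v$ itself, the two edges of $Q$ at $v$, the $1$-cell $L = [0,v]$, and the two $2$-cells obtained by intersecting $Q$ with the cones over those two edges of $Q$. Viewed as cones with apex $v$, these two $2$-cells are $\Cone(v_1,\ell)$ and $\Cone(\ell,v_2)$; they overlap exactly in $L$ and their union is $C$. Thus $\mathscr{P}$ divides $C$ into two $2$-cones separated by $L$, and this, together with the second condition, now follows from the single identity
\[
v_1 + v_2 \;=\; (-h,-a_1) + (h,a_2) \;=\; (0,\, a_2 - a_1) \;=\; \theta\,\ell,
\]
where $\theta = a_2 - a_1 > 0$ is the lattice length of $e$: the coefficients of $v_1$ and $v_2$ in the resulting expression $\ell = \tfrac1\theta(v_1+v_2)$ are both strictly positive, so $\ell$, and hence $L$, lies in $\Int C$ (confirming that $L$ genuinely separates $C_1$ from $C_2$), while $v_1 + v_2 = \theta\ell \in L$ is exactly condition (2).

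I expect the only delicate point to be the bookkeeping of orientations: that the inward edge-directions of $Q$ at $v$ are $(-h,-a_1)$ and $(h,a_2)$ rather than their negatives, and that $\ell$ points from $v$ towards $0$ rather than away. Both are forced by convexity — since $0 \in \Int Q$, the segment $[0,v]$ meets $\partial Q$ only at $v$, so its direction at $v$ must lie in $\Int C$ — which pins down exactly the sign choices above, consistently with the earlier lemma.
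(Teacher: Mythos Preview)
Your proof is correct and follows essentially the same strategy as the paper: choose a convenient $\GL_2(\Z)$ normal form for the edge $e$ of $P$, read off the primitive generators of the tangent wedge $C$ at the dual vertex, and verify directly that their sum lies on the ray $L$. The only cosmetic difference is the normal form chosen: you reuse the coordinates $(a_1,-h),(a_2,-h)$ from the singularity-content lemma, whereas the paper places the vertices of $e$ at $(0,1)$ and $(n,-q)$, obtaining generators $(1,0),(q,n)$ for $C$ and $(q+1,n)=(1,0)+(q,n)$ for $L$; this second normal form is set up with an eye to the $\frac{1}{n}(1,q)$ canonical-cover computation that immediately follows.
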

\begin{proof}
The first condition is obvious, the spanning fan introduces precisely one new ray intersecting $v$. For the second condition note that an edge of $P$ may be put into the following standard form:
\begin{center}
\includegraphics*[viewport=147 552 293 668]{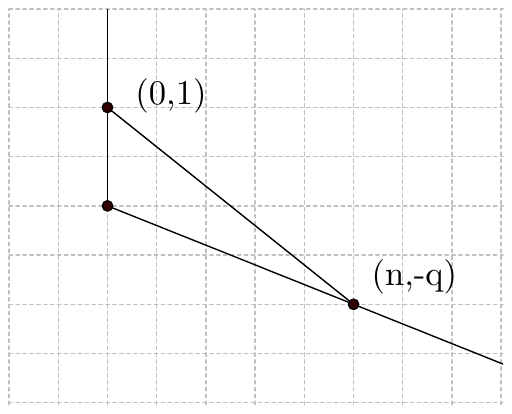}
\end{center}
with the vertices of $P$ at $(0,1), (n,-q)$.  Taking the dual cone:
\begin{center}
\includegraphics*[viewport=162 553 275 669]{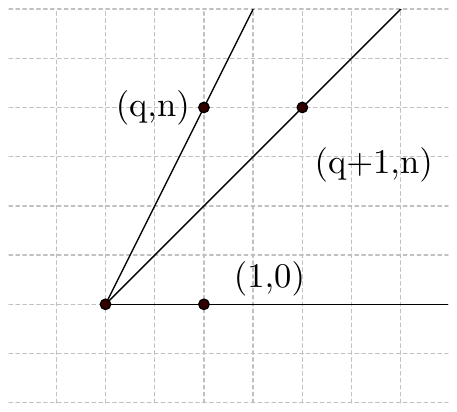}
\end{center}
We see the (rational) generators of this cone are $(1,0),(q,n)$, the ray $L$ defined by the normal to the edge of $P$ is generated by $(q+1,n)$ and satisfies the second condition.
\end{proof}

We recall the canonical cover construction for the singularity $X = \frac{1}{n}(1,q)$, for which we use the following notation:

\begin{notation}\ 
\begin{enumerate}
\item Define $p:=q+1$.
\item Let $w := \text{hcf}(n,p)$ and define $a,r$ by requiring that $n = wr, p = wa$, so in particular $q = wa-1$.
\item Define $m$,~$w_0$ by $w = mr + w_0$ with $0 \leq w_0 < r$.
\end{enumerate}
\end{notation}

\begin{remark}
The singularity content of the singularity $X = \frac{1}{n}(1,q)$ is precisely $m$.
\end{remark}

Having fixed this notation the canonical cover of $X$ is:
\begin{construction}\label{cons:loc-qgor}
Letting $X = \frac{1}{n}(1,q)$ there is an embedding $X \hookrightarrow \frac{1}{r}(1,q,a)$ which takes $X$ onto the hypersurface $\{xy = z^w\}/\mu_r$. The $\Q$-Gorenstein deformations of $X$ are determined by considering the space $\C^{m+1}$ of degree-$m$ polynomials $f_m$ and forming the family of hypersurfaces
\[
\{xy = z^{w_0}f_m(z^r)\}/\mu_r
\]
We shall show that our local model $R_\cup$ is always of this form and thus that the space of polynomials defined by the log-structure on this line segment may be identified with the parameter space of $\Q$-Gorenstein deformations.
\end{construction}

In order to prove this relation, we compare the cones constructed in the proof of Lemma~\ref{lem:local-polygons} to Construction~\ref{cons:loc-qgor}.

\begin{construction}\label{cons:monoids}
Given $X = \frac{1}{n}(1,q)$, the fan of $X$ is given by
\[
\Cone((0,1),(n,-q))
\]
as in the proof of Lemma~\ref{lem:local-polygons}.  This is isomorphic to the cone $\Cone\left((1,0),(0,1)\right)$ in the lattice: $\Z^2 + \frac{1}{n}(1,q) $.  Similarly $Y = \frac{1}{r}(1,q,a)$ is determined by $\Cone\left((1,0,0),(0,1,0),(0,0,1)\right)$ in the lattice: $\Z^3 + \frac{1}{r}(1,q,a)$. Following Construction~\ref{cons:loc-qgor} we should consider the hypersurface $\{xy = z^w\}$.  This is the image of the embedding $X \hookrightarrow Y$. This embedding is induced by a map $\iota\colon \Z^2 + \frac{1}{n}(1,q) \rightarrow \Z^3 + \frac{1}{r}(1,q,a)$ between the respective lattices which may be expressed as the following matrix, which we also call $\iota$.
\[
\iota = \begin{pmatrix}
w & 0 \\
0 & w \\
1 & 1
\end{pmatrix}
\]
In particular $\iota(\frac{1}{n}(1,q)) = \frac{1}{n}(w,qw,1+q) = \frac{1}{wr}(w,qw,wa) = \frac{1}{r}(1,q,a)$.  We wish to compute the map between the dual lattices induced by $\iota$. Observe that $(\Z^2+\frac{1}{n}(1,q))^\vee$ is the sublattice
\[
\left\{\alpha \in {\Z^2}^\vee : \alpha((1,q)) \in n\Z  \right\}
\]
of the dual lattice $\Z^2{}^\vee$.  There is an analogous expression for the lattice dual to $\Z^3 + \frac{1}{r}(1,q,a)$.  From the matrix $\iota$ we may easily compute $\iota^\star$, in particular $\iota^\star(x^r) = x^n$,~$\iota^\star(y^r) = y^n$ and $\iota^\star(z^r) = x^ry^r$.
\end{construction}
\begin{remark}
Recall that the image of $\iota^\star$ is a sublattice of $\Z^2{}^\vee$.  The lattice elements corresponding to $x^n, x^ry^r, y^n$ are all primitive in this lattice, for example $x^ry^r$ is the generator of the cone previously called $W$.
\end{remark}

Using these constructions we shall define a ring $R'{}^k_\cup$ and prove that it is isomorphic to $R^k_\cup$.
\begin{definition}
Given a zero stratum $v$ of $\mathscr{P}$ contained in $\partial B$ we may form the pair $(C,L)$ as above.  Note that $C$ need not be strictly convex.  In particular we may define the integers $n,q,w$ for this cone.
\[
R'{}^k_\cup = S_k[x,y,z]^{\mu_r}/\big(xy = t^lz^{w_0}f_\tau(z^r)\big)
\]
where the $\mu_r$ action has weights $(1,q,a)$ and $l$ is the slope of the piecewise linear function $\phi$.
\end{definition}

\begin{prop}
\label{prop:orbifold-model}
$R'{}^k_\cup$ is isomorphic to $R^k_\cup$.
\end{prop}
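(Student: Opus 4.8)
The plan is to realise $R^k_\cup$ as the ring of $\mu_r$-invariants of the single hypersurface
\[
\widetilde{R}^k\ :=\ S_k[x,y,z]\big/\big(xy-t^lz^{w_0}f_\tau(z^r)\big),
\]
on which $\mu_r$ acts with weights $(1,q,a)$; since by definition $R'{}^k_\cup=(\widetilde{R}^k)^{\mu_r}$, this will identify the two rings.

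I would first fix coordinates using Lemma~\ref{lem:local-polygons} and Construction~\ref{cons:monoids}. There the tangent cone $C$ at $v$, together with its subdivision by $L$ into $C_1,C_2$, is put into the standard form in which the $\mu_r$-cover $\{xy=z^w\}$ of $X=\tfrac1n(1,q)$ arises through the lattice embedding $\iota$; in particular $\iota^\star(z^r)=x^ry^r$, and $x^ry^r$ is the primitive generator of the ray $L$ --- that is, the monomial called $W$ in Definition~\ref{def:icup}. The other generators of the monoid $P_{\omega,\phi}$ --- the $X_i$, supported over $C_1$ where $\phi\equiv0$, and the $Y_j$, supported over $C_2$ where $\phi$ has slope $l\breve{d}_\tau$ --- are, under $\iota^\star$, precisely the $\mu_r$-invariant monomials in $x,y,z$ whose $\Lambda$-exponent lies in $C_1^\vee$, respectively $C_2^\vee$. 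The candidate homomorphism $\Psi\colon R^k_\cup\to(\widetilde{R}^k)^{\mu_r}$ sends $W\mapsto z^r$, $t\mapsto t$, and each $X_i,Y_j$ to the corresponding invariant monomial; the $\phi$-heights, read off by pairing with $n_0$, match the exponents $\langle n_0,\bar{m}\rangle$ appearing in the $f$-twists of Definition~\ref{def:icup}.

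The content of the argument is that $\Psi$ is well defined, i.e.\ that it annihilates every generator of $I_\cup$. A binomial toric relation supported entirely over $C_1$ (resp.\ over $C_2$) becomes a genuine identity of invariant monomials, hence maps to $0$. A relation crossing from $C_1$ into $C_2$ reduces, after applying the toric relations internal to each cone, to repeated use of the single crossing identity $xy=t^lz^{w_0}f_\tau(z^r)$ in $\widetilde{R}^k$; the powers of $f_\tau(z^r)$ and of $t$ that this produces are exactly the factors $f^{\pm\sum\langle n_0,m\rangle}$ and $t^\chi$ prescribed in Definition~\ref{def:icup}, and the instruction there to divide by the maximal power of $f$ matches the fact that $xy-t^lz^{w_0}f_\tau(z^r)$ is the reduced equation of the cover. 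I expect this bookkeeping to be the main obstacle: one must keep the $\mu_r$-weights consistent using $n=wr$, $p=q+1=wa$ and $w=mr+w_0$ --- so that $xy$ (weight $p\equiv w_0a\bmod r$) and $z^{w_0}f_\tau(z^r)$ (weight $w_0a$) transform alike and the relation descends to invariants --- and in particular see how the exponent $w_0$, rather than $w$, is forced by the non-integrality of the vertices of $\tau$. The remaining points are formal.

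Finally, $\Psi$ is surjective because, by Lemma~\ref{lem:splitting}, each $R^k_{\omega,\sigma_i,\mathfrak{u}_i}$ is generated over $S_k[W]$ by the $X_i$ and $Y_j$, while the relation $xy=t^lz^{w_0}f_\tau(z^r)$ lets one rewrite any mixed $x$--$y$ monomial in terms of the images of the $X_i$, $Y_j$ and $W$ --- mirroring the surjectivity argument of Proposition~\ref{prop:ring_iso}. Injectivity follows by the same monomial bookkeeping as in the proof of Proposition~\ref{prop:ring_iso}; equivalently, $R^k_\cup$ and $(\widetilde{R}^k)^{\mu_r}$ are both flat over $S_k=k[t]/(t^{k+1})$ with the same reduction modulo $(t)$ --- the chart $V(\sigma_1)\cup_{V(\tau)}V(\sigma_2)=\{xy=0\}/\mu_r$ of the central fibre $X_0(Q,\mathscr{P},s)$ --- so a surjection inducing an isomorphism modulo $(t)$ is an isomorphism.
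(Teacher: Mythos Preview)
Your approach is essentially the paper's, run in the opposite direction: the paper writes down an $S_k$-module isomorphism by matching monomial bases (the monomials $x^az^b$, $y^cz^d$ on one side, the monomials in $X_i,Y_j,W$ on the other) and then checks multiplicativity by a direct computation, whereas you define an algebra homomorphism on generators and check relations. Either way the crux is the same computation --- that for $V$ with $\bar V\in C_2$ the integer $-\langle n_0,\bar V\rangle$ equals the $y$-exponent $c$ in $\iota^\star$, so that reducing a mixed monomial $x^ay^cz^d$ via the hypersurface relation $xy=t^lz^{w_0}f(z^r)$ produces exactly the power $f^{-\langle n_0,\bar V\rangle}$ prescribed by Definition~\ref{def:icup}. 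Your flatness-plus-Nakayama argument for injectivity is a clean alternative to the paper's direct monomial bookkeeping.

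There is one genuine gap. The ideal $I_\cup$ in Definition~\ref{def:icup} is built from $f=f_\tau\cdot\prod_{\d}f_\d$ (slab function \emph{times ray functions}; see the notation fixed at the start of \S\ref{sec:local_models}), while $R'{}^k_\cup$ is defined using only $f_\tau$. As written, your map $\Psi$ sends a generator of $I_\cup$ involving $f^{\,\gamma}$ to an expression in $(\widetilde R^k)^{\mu_r}$ involving $f_\tau(z^r)^{\,\gamma}$, which is not zero. The paper fixes this with a preliminary step you omit: each $f_\d\equiv 1\bmod t$ is a unit in $S_k[x,y,z]^{\mu_r}$, so the substitution $xy\mapsto xy\prod_\d f_\d$ is an automorphism identifying $R'{}^k_\cup$ with $S_k[x,y,z]^{\mu_r}/(xy-t^lz^{w_0}f(z^r))$. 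Once you insert this, your argument goes through.
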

\begin{proof}
There is an obvious spanning set of $R'{}^k_\cup$ as an $S_k$-module; namely monomials with exponents in the sublattice of $\Z^3{}^\vee$ dual to $(\Z^3 + \frac{1}{r}(1,q,a))$.  Consider the submodule generated by the monomials $x^az^b$ and $y^cz^d$; these give a basis for $R'{}^k_\cup$ as a $S_k$-module. Making the analogous statement for $R^k_\cup$ we observe that $R^k_\cup$ is generated as an $S_k$-module by monomials with exponents projecting to integral points in the cone $C$.  There is an obvious identification of these two bases, which extends linearly to a map of $S_k$-modules; we now show this is an isomorphism of algebras. As a preliminary step we replace $f_\tau$ in the definition of $R'{}^k_\cup$ with $f = f_\tau\prod f_\d$ where the product is over the rays $\d$ of the scattering diagram supported on $\tau$. Note each $f_\d$ is invertible in $R'{}^k_\cup$, so an automorphism of $S_k[x,y,z]^{\mu_r}$ sending $xy \mapsto xy\prod f_\d$ induces an isomorphism of $R'{}^k_\cup$ with $S_k[x,y,z]^{\mu_r}/\big(xy = t^lz^{w_0}f(z^r)\big)$.

Fix $U, V \in R^k_\cup$ and write $U = \bar{U}t^{l_1}$ and $V = \bar{V}t^{l_2}$ where $\bar{U} \in C_1$ and $\bar{V} \in C_2$. Now take the corresponding elements in $R'{}^k_\cup$: $\iota^\star(x^az^bt^{l_1}), \iota^\star(y^cz^dt^{l_2})$.  Suppose we have that $UV$ projects to an element in $C_1$ and write $-\langle n_0, \bar{V}\rangle = \gamma$ so that $UV = \prod{X^{a_i}_i}W^bt^{l_1+l_2+\gamma l}f^\gamma$ where the $X_i$ correspond to elements of the Hilbert basis of $C_1$.  Writing

\[
\iota^\star(x^az^bt^{l_1}).\iota^\star(y^cz^dt^{l_2}) = \iota^\star(x^ay^cz^{b+d}t^{l_1+l_2})
\]
$UV$ in $C_1$ means that $c < a$ so using the relations in $R'{}^k_\cup$,
\[
\iota^\star(x^ay^cz^{b+d}t^{l_1+l_2}) =  \iota^\star\left(x^{a-c}z^{b+d+c.w_0}t^{l_1+l_2+cl}f(z^r)^c\right)
\]
\noindent Our $S_k$-module isomorphism identifies
\[
\iota^\star\left(x^{a-c}z^{b+d+c.w_0}t^{l_1+l_2+cl}\left(f(z^r)\right)^c\right)
\]
\noindent with $\prod{X^{a_i}_i}W^bt^{l_1+l_2+cl}f^c$; thus we only need to show that $\gamma = c$. Recall we have identifed $C$ with the quadrant in a sublattice of $\Z^2{}^\vee$. Therefore we can compute $\langle n_0, (v_1,v_2)\rangle$ directly. The primitive generator of $L$ in this sublattice of $\Z^2{}^\vee$ is $(r,r)$; the obvious element annihilating $(r,r)$ is $(1,-1)$, but this has index $w$ ($(1,1) = wr\frac{1}{n}(1,q) - wa(0,1)$), so in fact $\langle n_0, (v_1,v_2)\rangle = (v_1-v_2)/w$. Now consider an element $\iota^\star{y^cz^d} = x^{wc+d}y^{d}$, evaluating $\gamma = \langle n_0, (v_1,v_2)\rangle$ for this lattice point we find that indeed $\gamma = c$.
\end{proof}

\section{Smoothing quotient singularities of del-Pezzo surfaces}

\label{sec:smoothing}

Consider an affine manifold of polygon type, $B_Q$.  In the previous sections we have:
\begin{enumerate}
\item Defined the notion of a \emph{one-parameter degeneration} of such affine manifolds
\item Defined a family of log structures on the variety $X_0(B_Q, \mathscr{P},s)$
\item Outlined the Gross--Siebert algorithm for constructing a formal smoothing of this using the log-structure
\item Explicitly computed the various rings and the family in the case of an isolated boundary singularity.
\end{enumerate}

In this section we combine these to construct a flat family $\mathcal{X} \rightarrow \Spec\C[\alpha]\llbracket t \rrbracket$ which will satisfy the conditions of Theorem~\ref{thm:families}, namely:
\begin{itemize}
\item Fixing a nonzero $\alpha$ the restriction of $\cX$ over $\Spec \C\llbracket t\rrbracket$ is the flat formal family produced by the Gross--Siebert algorithm.
\item Fixing $\alpha = 0$ the restriction of $\cX$ over $\Spec \C\llbracket t\rrbracket$ is precisely the restriction of the Mumford degeneration of the pair $(Q,\mathscr{P})$.
\item Fixing $t=0$, the restriction of $\cX$ is $X_0(Q, \mathscr{P}, s) \times \Spec \C[\alpha]$.
\item For each boundary zero-stratum $p$ of $X_0(Q, \mathscr{P}, s)$ there is neighbourhood $U_p$ in $\cX$ isomorphic to a family $\mathcal{Y}\rightarrow \Spec\C[\alpha]$ obtained by first taking a one-parameter $\Q$-Gorenstein smoothing of the singularity of $X_Q$ at $v$, taking a simultaneous maximal degeneration of every fiber and restricting to a formal neighbourhood of the central fiber.
\end{itemize}

The obstacle to simply applying the Gross--Siebert algorithm to the family fiberwise is the jump in the log-structure at the central fiber; sections defining the log-structure are not permitted to vanish on any zero stratum.  In fact we wish to choose log-structures from a different bundle at the central fiber, as the singular locus has changed. Therefore we have no \emph{a priori} reason to suppose these glue to a family.  However, we shall prove that our explicit construction at boundary zero-strata enables one to extend the obvious family over $\C^\star$ to one over $\C$.

Recall we have a family of affine manifolds $\pi_Q\colon \mathcal{B}_Q \rightarrow \R$ defined by smoothing the corners, as described in Section~\ref{sec:affine_manifolds}.  Fix a one parameter family of log-structures compatible with the family of affine manifolds in the sense of Definition~\ref{def:compatible}.

\begin{remark}
Consider the scattering diagram $\mathfrak{D}_\omega$ at the central vertex; this is equivalent to a scattering diagram of the following form:
\[
\mathfrak{D} = \left\{\left(\R\bar{m}_i, \prod_{j,k}\left(1+c_{ijk}z^{-m_{ijk}}\right)\right): 1 \leq i \leq p\right\}
\]
Assuming $c_{ijk} \in \C[\alpha]$ the assumptions on a family of log structures imply that $c_{ijk} \in \alpha.\C[\alpha]$.
\end{remark}

\begin{definition}
For this section a \emph{family of scattering diagrams} (with parameter $\alpha$) is a scattering diagram defined via a map $r\colon P \rightarrow M$ and an $\mathfrak{m}$-primary ideal $I$, but now for $\d \in \mathfrak{D}$,~$f_\d \in \C[\alpha][P]/I$.  Further, write $\mathfrak{D}(\alpha)$ for the scattering diagram where all the functions have been evaluated at $\alpha$.
\end{definition}

\begin{lem}\label{lem:family_scattering}
Given a family of scattering diagrams $\mathfrak{D}$ there is another one $S_I\big(\mathfrak{D}\big)$ such that:
\[
S_I\big(\mathfrak{D}\big)(\alpha) = S_I\big(\mathfrak{D}(\alpha)\big)
\]
for all $\alpha \in \C$.
\end{lem}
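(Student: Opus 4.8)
The plan is to run the Kontsevich--Soibelman algorithm "with $\alpha$ carried along as a formal parameter" and then observe that specialisation commutes with the construction. Concretely, I would work over the ring $\C[\alpha][P]/I$ rather than $\C[P]/I$, treating it as a $\C[\alpha]$-algebra; since $I$ is $\mathfrak{m}$-primary the quotient is a finitely generated $\C[\alpha]$-module, and the Lie algebra of log derivations used in the scattering construction is likewise a finitely generated $\C[\alpha]$-module. The point is that the Kontsevich--Soibelman Lemma (the theorem on page establishing existence of $\mathrm{S}_I(\mathfrak{D})$) is proved by an order-by-order inductive computation inside this Lie algebra, in which at each stage one solves for new rays by equating coefficients; all of these operations -- exponentials, commutators, factorisations of the form $1+x$ with $x$ nilpotent -- are polynomial in the structure constants and hence defined over $\C[\alpha]$. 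So the same inductive process, performed over $\C[\alpha][P]/I$, produces a family of scattering diagrams $S_I(\mathfrak{D})$ in the sense of the definition just given.

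The key steps, in order, would be: (i) observe that the completion/finiteness hypotheses needed to run the algorithm hold after base change to $\C[\alpha]$, since $\C[\alpha][P]/I$ is still $\mathfrak{m}$-complete (indeed $I$ is $\mathfrak{m}$-primary and $\mathfrak{m}\cap\C[\alpha]=0$, so no completion in $\alpha$ is required); (ii) run the Kontsevich--Soibelman construction verbatim over this ring to obtain $S_I(\mathfrak{D})$, noting that the finiteness statement "$S_I(\mathfrak{D})\setminus\mathfrak{D}$ is entirely rays" is inherited because the ideal $I$ truncates the process after finitely many orders; (iii) for each fixed $\alpha_0\in\C$ there is a $\C$-algebra map $\mathrm{ev}_{\alpha_0}\colon \C[\alpha][P]/I \to \C[P]/I$, and one checks that each elementary step of the algorithm is compatible with $\mathrm{ev}_{\alpha_0}$ -- wall-crossing automorphisms $\theta_{\gamma,\d}$ transform to $\theta_{\gamma,\d(\alpha_0)}$, and the unique correction rays added at order $n$ over $\C[\alpha]$ evaluate to the unique correction rays added at order $n$ over $\C$; (iv) conclude by induction on the order that $S_I(\mathfrak{D})(\alpha_0)$ and $S_I(\mathfrak{D}(\alpha_0))$ have the same rays with the same functions, invoking the uniqueness half of the Kontsevich--Soibelman theorem to identify them.

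The main obstacle is step (iii)--(iv): the uniqueness in the Kontsevich--Soibelman Lemma is uniqueness \emph{up to equivalence} of scattering diagrams, not on the nose, so one must be slightly careful that the specialisation map $\mathrm{ev}_{\alpha_0}$ sends a chosen representative of $S_I(\mathfrak{D})$ to a chosen representative of $S_I(\mathfrak{D}(\alpha_0))$ rather than merely to something equivalent. The cleanest fix is to use the canonical (factored, each ray of the form $1+cz^m$ with distinct supports) normal form -- exactly the form produced by the factorisation process at the end of the proof of Proposition~\ref{prop:stab} -- for which the output of the algorithm is genuinely unique; then commutativity with $\mathrm{ev}_{\alpha_0}$ is a formal consequence of commutativity at each inductive step, which in turn is immediate because every operation is a $\C[\alpha]$-algebra operation. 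A secondary, purely bookkeeping point is that one should check the hypothesis "$c_{ijk}\in\alpha\cdot\C[\alpha]$" from the preceding remark is not actually needed for this lemma -- the statement as phrased holds for any family of scattering diagrams -- so no genericity of $\alpha$ is invoked here.
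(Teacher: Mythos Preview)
Your argument is correct, but the paper takes a different route. Rather than running the Kontsevich--Soibelman algorithm directly over $\C[\alpha][P]/I$ and then checking compatibility with each evaluation $\mathrm{ev}_{\alpha_0}$, the paper factors through a \emph{universal scattering diagram}: one replaces each coefficient $c_{ijk}\in\C[\alpha]$ by a formal variable $t_{ijk}$, obtaining a diagram $\mathfrak{D}'$ over $\C[Q]$ for a suitable monoid $Q\subseteq M\oplus\N^l$; the Kontsevich--Soibelman theorem is then applied once, in this standard setting, to produce $S_{I'}(\mathfrak{D}')$, and one simply pushes forward along the ring map $\phi\colon\C[Q]\to\C[\alpha][P]$, $t_{ijk}z^{-r(m_{ijk})}\mapsto c_{ijk}z^{-m_{ijk}}$, setting $S_I(\mathfrak{D}):=\phi(S_{I'}(\mathfrak{D}'))$. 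Compatibility with evaluation at $\alpha_0$ then follows immediately by composing $\phi$ with $\psi_{\alpha_0}$ and invoking uniqueness once. The advantage of the paper's approach is that it avoids re-verifying that the Lie-algebra inductive procedure works over a non-field base ring: everything is reduced to a single application of the theorem as already stated. Your approach is more elementary in that it dispenses with the auxiliary universal object, but it requires the (true, though not explicitly stated in the paper) fact that the algorithm is defined over any commutative base ring; your handling of the uniqueness-up-to-equivalence issue via canonical normal forms is sound, and indeed the paper's own conclusion is also only up to equivalence.
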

\begin{proof}
We use the notion of a universal scattering diagram, indeed, writing:
\[
\mathfrak{D} = \left\{\left(\R\bar{m}_i, \prod_{j,k}\left(1+c_{ijk}z^{-m_{ijk}}\right)\right): 1 \leq i \leq p\right\}
\]
we can form:
\[
\mathfrak{D}' = \left\{\left(\R\bar{m}_i, \prod_{j,k}\left(1+t_{ijk}z^{-r\left(m_{ijk}\right)}\right)\right): 1 \leq i \leq p\right\}
\]
Where in the first scattering diagram is $c_{ijk}$ is polynomial in $\alpha$ and the second scattering diagram is defined over the ring $\C[M]\llbracket \left\{t_{ijk}\right\}\rrbracket$. In fact, following \cite{TropGeom}, this scattering diagram is defined over $\C[Q]$ where $Q \subseteq M \oplus \N^l$ is the monoid freely generated by pairs $\left(-r\left(m_{ijk}\right) , e_{ijk} \right)$, where $e_{ijk}$ corresponds to $t_{ijk}$. Thus given an ideal $I$ of $P$ we obtain a scattering diagram $S_{I'}\left(\mathfrak{D}'\right)$ by reduction modulo $I' = \phi^{-1}(I)$ where:
\[
\phi\colon \C[Q] \rightarrow  \C[\alpha][P]
\]
via $t_{ijk}z^{-r\left(m_{ijk}\right)} \mapsto c_{ijk}z^{-m_{ijk}}$.  Composing this with the evaluation map $\psi_{\alpha}\colon \C[\alpha][P] \rightarrow \C[P]$ we obtain a scattering diagram: $\psi_{\alpha}\circ\phi\left(S_{I'\left(\mathfrak{D}'\right)}\right)$, which must be equivalent to $S_I\left(\mathfrak{D}\left(\alpha\right)\right)$ by uniqueness.  Thus we set $S_I\left(\mathfrak{D}\right) = \phi\left(S_{I'}\left(\mathfrak{D}'\right)\right)$.
\end{proof}

\begin{prop}\label{prop:localmodel}
Varying $\alpha$ gives an algebraic family $\pi \colon \Spec \widetilde{R}^k_{\omega} \rightarrow \Spec \C\left[\alpha\right]$.
\end{prop}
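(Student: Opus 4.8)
The plan is to define $\widetilde{R}^k_\omega$ by rerunning the constructions of Section~\ref{sec:local_models} over $\C[\alpha]$ and then to read off flatness from the orbifold presentation. For the first step, Lemma~\ref{lem:family_scattering} produces at each joint supported on the $1$-cell $\tau$ a family of consistent scattering diagrams over $\C[\alpha]$; exponentiating its rays into the structure $\mathscr{S}$ gives ray functions $f_\d$ with coefficients in $\C[\alpha]$, and by the Remark preceding the statement the slab coefficients lie in $\alpha\,\C[\alpha]$. Hence the combined function $f=f_\tau\prod_\d f_\d$ is a well-defined element of $\C[\alpha][z^r]$ (equivalently, of $S_k[W]$ with $\C[\alpha]$-coefficients), and substituting it for $f$ in Definition~\ref{def:icup}, respectively for $f_\tau$ in the orbifold ring $R'{}^k_\cup$, produces a $\bigl(\C[\alpha][t]/(t^{k+1})\bigr)$-algebra, which I call $\widetilde{R}^k_\omega$; its reduction modulo $\alpha-\alpha_0$ is the ring $R^k_\omega$ attached to the log structure at parameter value $\alpha_0$. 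The only point here that is not purely formal is that the change-of-chamber maps gluing the local diagram at $\omega$ must be consistent over $\C[\alpha]$; but $\C$ is infinite, so $\C[\alpha]\hookrightarrow\prod_{\alpha_0\in\C}\C$ is injective, and since by Lemma~\ref{lem:family_scattering} every specialisation of the relevant path-ordered product is the identity, the product is the identity over $\C[\alpha]$. With this in hand Propositions~\ref{prop:ring_iso} and~\ref{prop:orbifold-model} go through verbatim over $\C[\alpha]$ --- their proofs only manipulate the monomials $x_i,y_j,w$ and the function $f$, and use nothing about the ground ring beyond invertibility of the $f_\d$ --- so
\[
\widetilde{R}^k_\omega\;\cong\;\bigl(\C[\alpha][t]/(t^{k+1})\bigr)[x,y,z]^{\mu_r}\big/\bigl(xy-t^l z^{w_0}f(z^r)\bigr),
\]
a finitely generated $\C[\alpha]$-algebra.

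To prove flatness over $\C[\alpha]$, note that $\C[\alpha]$ is a principal ideal domain, so a finitely generated $\C[\alpha]$-module is flat exactly when it is free; thus it suffices to exhibit an $\alpha$-independent $\C[\alpha]$-basis. In the presentation above the single relation $xy=t^l z^{w_0}f(z^r)$ has right-hand side in $\C[\alpha][z^r][t]$, and reducing an arbitrary $\mu_r$-invariant monomial modulo it rewrites it as a $\bigl(\C[\alpha][t]/(t^{k+1})\bigr)$-combination of the monomials $\{x^az^b,\,y^cz^d\}$ exactly as in the proof of Proposition~\ref{prop:orbifold-model}; this spanning set is $\C[\alpha]$-linearly independent because the reduction strictly decreases the smaller of the $x$- and $y$-degrees, a statistic that does not involve $\alpha$ or $t$. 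Therefore $\{x^az^b\,t^e,\ y^cz^d\,t^e : 0\le e\le k\}$ is a $\C[\alpha]$-basis of $\widetilde{R}^k_\omega$, which is consequently free, hence flat, over $\C[\alpha]$; this shows $\pi$ is an algebraic family. The interior strata needed to assemble the global family of Theorem~\ref{thm:families} are handled by the same principle: codimension $0$ is a trivial deformation (so $\C[\alpha]$-free), codimension $1$ uses Lemma~\ref{lem:cod1} with $f_\omega\in\C[\alpha][\Lambda_\omega]$ and is $\C[\alpha]$-free on an $\alpha$-independent monomial basis in $U,V$, and codimension $2$ is the family version of the fibre-product computation of Section~\ref{sec:gluing}.

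The step I expect to be the genuine obstacle is the one flagged at the opening of Section~\ref{sec:smoothing}: for $\alpha\neq0$ the fibre is the Gross--Siebert smoothing, whose log structure is a section of $\bigoplus\mathcal{N}_\rho$ that is allowed to vanish only away from the toric strata, whereas at $\alpha=0$ this section degenerates, so there is no \emph{a priori} reason the obvious family over $\C^\times$ extends over $\C$. What makes the extension work is exactly the orbifold presentation $R'{}^k_\cup$: it realises the chart at $\omega$ as a single hypersurface in which $f$ enters through one relation only and is never inverted or divided out, so the limit $\alpha\to0$ is the honest toric local model $xy=t^l z^{w_0}f(z^r)\big|_{\alpha=0}$ of the Mumford degeneration, and flatness across $\alpha=0$ is manifest. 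Everything else is a routine transcription of Sections~\ref{sec:scattering}--\ref{sec:local_models} with $\C$ replaced by $\C[\alpha]$.
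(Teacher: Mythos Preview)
Your construction is essentially the paper's: both build $\widetilde{R}^k_\omega$ by feeding the $\alpha$-dependent function $f=f_\tau\prod_\d f_\d$ into the existing presentations---Definition~\ref{def:icup} at a boundary vertex, the inverse-limit of the localised rings $\tilde R^k_{\omega,\tau,\mathfrak{u}}=(R^k_{\omega,\tau,\sigma_\mathfrak{u}}\otimes_\C\C[\alpha])_{f_\tau}$ at the central vertex---with Lemma~\ref{lem:family_scattering} supplying polynomiality of the ray functions in $\alpha$. Where you differ is in emphasis and in how much you prove. You lean on the orbifold hypersurface form $R'{}^k_\cup$ to give an explicit $\alpha$-independent monomial basis, hence flatness and the honest extension across $\alpha=0$; the paper's proof of the proposition does not argue flatness at all, deferring it to the remark that follows (which cites Proposition~\ref{prop:orbifold-model} for boundary vertices and the proof of \cite[Theorem~6.32]{TropGeom} for the central one). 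Conversely, the paper is more explicit than you about the central vertex, actually writing down the $\C[\alpha]$-linear change-of-chamber maps and the inverse limit, where you only gesture at ``the family version of the fibre-product computation''. Your Zariski-density trick for consistency over $\C[\alpha]$ is correct but unnecessary: Lemma~\ref{lem:family_scattering} already works over $\C[\alpha]$ directly via the universal scattering diagram.
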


\begin{proof}
We construct $\C[\alpha]$-algebras $\widetilde{R}^k_{\omega}$ the fibers of which are the rings $R^k_{\omega}$ defined using the various log structures.

First let $\omega$ be a vertex contained in $\partial B$. From Section~\ref{sec:local_models} we have a description of these rings via the isomorphism with the ring $R^k_\cup$.  We denote by $\widetilde{R}^k_{\cup}$ the $\C[\alpha]$-algebra:
\[
\C[\alpha]\left[X_i,Y_j,W\right]/I_{\cup}
\]
Let $\omega$ be the central vertex of $\mathscr{P}$.  The ring $R^k_{\omega}$ is a fiber product of rings of the form $R^k_{\omega \tau \mathfrak{u}}$ which is a quotient of the algebra $\C\left[P_{\omega, \phi}\right]$.  We form the trivial algebra $\C[\alpha]\left[P_{\omega, \phi}\right]$ and so form the analogous rings $\tilde{R}^k_{\omega, \tau, \mathfrak{u}}$.  Firstly setting
\[
\tilde{R}^k_{\omega, \tau, \sigma_\mathfrak{u}} = R^k_{\omega, \tau, \sigma_\mathfrak{u}}\otimes_{\C} \C[\alpha]
\]
and then defining:
\[
\tilde{R}^k_{\omega, \tau, \mathfrak{u}} = \left(\tilde{R}^k_{\omega, \tau, \sigma_\mathfrak{u}}\right)_{f_\tau}
\]
noting again that $f_\tau$ has non-trivial dependance on $\alpha$.  The change of chamber maps now give morphisms:
\[
\theta_{\mathfrak{u},\mathfrak{u}'} \colon \tilde{R}^k_{\omega, \tau, \mathfrak{u}} \rightarrow \tilde{R}^k_{\omega, \tau, \mathfrak{u}'}
\]
via the natural extension of the original definition:
\[
\theta_{\mathfrak{u},\mathfrak{u}'}\left(z^m\right) = \left(\prod{f_\d}\right)^{\left\langle n_0, m\right\rangle }z^m
\]
These are isomorphisms of the rings $\tilde{R}^k_{\omega, \tau, \mathfrak{u}}$, giving $\tilde{R}^k_\omega$ the structure of a $\C[\alpha]$-algebra by taking the inverse limit of the rings $\tilde{R}^k_{\omega, \tau, \mathfrak{u}}$. Finally we need to check that varying $\alpha$ the functions on rays of the scattering diagram are polynomial in $\alpha$, but this we know from Lemma~\ref{lem:family_scattering}.
\end{proof}

\begin{definition}
We define the scheme $\cX_Q \rightarrow \Spec \C[\alpha]\llbracket t \rrbracket$ via the inverse limit over the system $\tilde{R}^k_\omega$, each of which is a $\C[\alpha]\llbracket t \rrbracket$-algebra.
\end{definition}

\begin{remark}
In Theorem~\ref{thm:families} we demand that $\cX_Q$ is flat over $\Spec \C[\alpha]\llbracket t \rrbracket$. Since flatness is local, we can consider $\C[\alpha]\llbracket t \rrbracket$-algebras $\tilde{R}^k_\omega$ for each zero-dimensional stratum $\omega$. We break these into two cases:
\begin{itemize}
\item If $\omega$ is a boundary zero-stratum flatness is an immediate consequence of Proposition~\ref{prop:orbifold-model} which gives an explicit description of this algebra.
\item If $\omega$ is the central vertex we observe that by Lemma~\ref{lem:family_scattering} the functions $f_\d$ on each ray of the scattering diagram at order $k$ is an element of $\C[\alpha,t]/(t^{k+1})$. We can now follow the proof of the case $\dim \omega = 0$ in Theorem 6.32 of \cite{TropGeom} over the ring $\C[\alpha,t]/(t^{k+1})$.
\end{itemize}
\end{remark}
We now prove that this satisfies the various conditions of Theorem~\ref{thm:families}, first identifying the restriction to $\alpha = 0$.
\begin{prop}\label{prop:central}
The restriction of $\cX_Q \rightarrow \Spec \C[\alpha]\llbracket t \rrbracket$ to $\alpha = 0$ is a thickening of the central fiber of the Mumford degeneration.
\end{prop}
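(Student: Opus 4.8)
The plan is to restrict every ingredient of the reconstruction to $\alpha = 0$, to observe that all instanton corrections degenerate to monomials there, and to conclude that the gluing becomes purely toric, reproducing the $k$th-order thickenings of the Mumford degeneration of $(Q,\mathscr{P})$.

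First I would show that at $\alpha = 0$ the scattering data is trivial. By the remark preceding Proposition~\ref{prop:localmodel} the coefficients $c_{ijk}$ of the log structure propagated to the central vertex lie in $\alpha\,\C[\alpha]$, and Lemma~\ref{lem:family_scattering} says that evaluating the universal scattering diagram at $\alpha$ commutes with the Kontsevich--Soibelman algorithm; hence $S_I(\mathfrak{D})(0) = S_I(\mathfrak{D}(0))$, and $\mathfrak{D}(0)$ is a collection of lines whose functions are all congruent to $1$. So at $\alpha = 0$ there are no nontrivial rays in $\mathscr{S}$. Along each interior $1$-cell $\tau$ the slab function is, by compatibility with the family of affine manifolds in the sense of Definition~\ref{def:compatible}, a section whose zeroes track the singular set $\Delta$; at $\alpha = 0$ these zeroes are forced onto the torus-fixed points of $V_\tau$, so the slab function becomes a monomial. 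Consequently every change-of-chamber map $\theta_{\mathfrak{u},\mathfrak{u}'}$ of Proposition~\ref{prop:localmodel} specializes to a \emph{monomial} automorphism, and every localisation at a slab function becomes a localisation at a monomial.

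Next I would identify the local rings. At a boundary zero-stratum $v$ the ring $\tilde R^k_\cup$ is, by Proposition~\ref{prop:orbifold-model}, isomorphic to $S_k[x,y,z]^{\mu_r}/\bigl(xy - t^l z^{w_0}f_\tau(z^r)\bigr)$; at $\alpha = 0$ the degree-$m$ polynomial $f_\tau$ specializes to the pure power $u\mapsto u^m$, so that $z^{w_0}f_\tau(z^r)$ becomes $z^{w_0+rm} = z^{w}$ (using $w = mr + w_0$) and the ring becomes $S_k[x,y,z]^{\mu_r}/(xy - t^l z^w)$, which is precisely the $k$th-order thickening of the central fibre of the Mumford degeneration of the toric germ $\frac{1}{n}(1,q)$ at $v$; equivalently, in the description of Definition~\ref{def:icup}, putting $f = 1$ collapses $I_\cup$ to the plain toric binomial ideal of $C_1\cup C_2$. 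At the central vertex $\omega$ the scattering is trivial and the change-of-chamber maps are monomial by the previous step, so the inverse limit defining $\tilde R^k_\omega|_{\alpha = 0}$ is the purely toric one; by the computation underlying the identification of $U^0_\omega$ together with Remark~\ref{rem:monoid_over_graph} this is the $k$th-order neighbourhood of the Mumford central fibre in the chart $\Star(\omega)$.

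Finally I would glue. Since all open gluing data are cohomologous by Proposition~\ref{prop:no_glue}, and since at $\alpha = 0$ each chart $\tilde R^k_\omega|_{\alpha = 0}$ and each transition between charts has become the corresponding toric datum of the Mumford family, the inverse limit over $k$ of the $\tilde R^k_\omega|_{\alpha = 0}$, glued over the strata of $\mathscr{P}$, is exactly the formal completion of the Mumford degeneration of $(Q,\mathscr{P})$ along its central fibre. The main obstacle is the one flagged in the text: the sheaf of log structures at $\alpha = 0$ lives on a genuinely different singular locus, so $\cX_Q|_{\alpha = 0}$ is not manifestly the Mumford thickening --- but the explicit ring isomorphisms of Propositions~\ref{prop:ring_iso} and~\ref{prop:orbifold-model} were arranged precisely to make the specialization $\alpha\to 0$ transparent, and the residual work is bookkeeping: that specialization commutes with the inverse limits (at each order the relevant functions lie in $\C[\alpha,t]/(t^{k+1})$ and vanish to the required order at $\alpha = 0$) and that the monomial change-of-chamber maps coincide with the toric change-of-strata maps of the Mumford degeneration.
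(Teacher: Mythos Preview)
Your argument is correct and follows the same two-part structure as the paper: trivialise the scattering at the central vertex via Lemma~\ref{lem:family_scattering}, and handle the boundary vertices by an explicit ring identification. The only substantive difference is that at boundary vertices you invoke the orbifold cover of Proposition~\ref{prop:orbifold-model} and specialise $f_\tau(u)\to u^m$ to obtain $xy=t^lz^w$, whereas the paper works directly with the $R^k_\cup$ presentation of Definition~\ref{def:icup} and checks that the relations reduce to the Mumford binomials when $f$ becomes the monomial $W^{\deg f_\tau}$. Both routes are equivalent by Proposition~\ref{prop:orbifold-model}; yours is arguably cleaner because the single hypersurface equation makes the Mumford limit immediate.

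One small correction: your parenthetical ``equivalently, in the description of Definition~\ref{def:icup}, putting $f=1$ collapses $I_\cup$ to the plain toric binomial ideal'' is not what actually happens at $\alpha=0$. At the boundary vertex the slab function does \emph{not} specialise to $1$ but to the monomial $W^{\deg f_\tau}$ (this is exactly the statement $f_\tau(u)\to u^m$ in your covering coordinates, since $z^r$ corresponds to $W$). It is true that substituting $f=1$ into Definition~\ref{def:icup} tautologically returns the Mumford binomials, but that is not the specialisation the family performs; the paper's point is that the relations with $f=W^l$ \emph{also} cut out the Mumford chart, and this requires the brief check the paper gives. Your orbifold argument already establishes the result independently, so this slip is harmless, but the ``equivalently'' is misleading.
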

\begin{proof}
Firstly we address the local model $R^k_\omega$ for $\omega$ the vertex of $\mathscr{P}$ in the interior of $B$.  However the fiber $\alpha = 0$ is trivial, in the sense that all the slab functions are equal to 1, therefore the scattering diagram is trivial and there is a bijection between chambers and 2-cells of $\mathscr{P}$.  Therefore the inverse limit simply reconstructed a local piece of the Mumford degeneration, as claimed.

Of greater interest are the local models at the vertices.  As we remarked we cannot use the inverse limit, but rather we use the $R^k_\cup$ model constructed above. Using the notation from Section~\ref{sec:local_models} we recall that the non-trivial relations were between generators projecting to different cones, for example:
\[
\left(\prod{f_{\d}}\right)^{\sum_k{\gamma_k\left\langle n_0, m_k\right\rangle}}\prod_{i,j}{X^{\alpha_i}_iY^{\beta_j}_j} = \left(\prod{f_{\d}}\right)^{\sum_i{\alpha_i\left\langle n_0, m_i\right\rangle}}\prod_{k,l}{X^{\gamma_k}_kY^{\delta_l}_l}
\]
Observe that $\prod{f_\d} = f_\tau \prod_{\d \textrm{ ray}}{f_\d}$ where $f_\tau$ is the slab function associated to $\tau$, and in particular that the our assumptions on the one-parameter family of log-structures imply that $f_\tau|_{\alpha = 0} = w^{\deg f_\tau}$.  Observe also that $\prod_{\d \textrm{ ray}}{f_\d}|_{\alpha = 0} = 1$.  This is a consequence of the fact that $S\big(\mathfrak{D}\big)(\alpha) = S\big(\mathfrak{D}(\alpha)\big)$: for the scattering diagram at the central vertex, setting $\alpha = 0$ the scattering diagram is trivial -- every line has function $f_\d = 1$.  Therefore this is already consistent to all orders.  The rays of this scattering diagram propagate until they intersect $\partial B$ and indeed give all the rays in this structure.  Combining these two observations we see that the fiber over zero has co-ordinate ring with relation:
\[
\left(w^l\right)^{\sum_k{\gamma_k\left\langle n_0, m_k\right\rangle}}\prod_{i,j}{X^{\alpha_i}_iY^{\beta_j}_j} = \left(w^l\right)^{\sum_i{\alpha_i\left\langle n_0, m_i\right\rangle}}\prod_{k,l}{X^{\gamma_k}_kY^{\delta_l}_l}
\]
Here $l = \deg(f_\tau)$, which is also the lattice length of the monodromy polytope of the discriminant locus on $\tau$.  Thus the local models near the boundary vertices, when $\alpha$ is set equal to zero, recover the local models for the Mumford degeneration.
\end{proof}

To conclude the proof of Theorem~\ref{thm:families} we need to show that near the boundary vertices the family $\mathcal{X}_Q$ is induced by a $\Q$-Gorenstein smoothing of the singularities of $Q$.

\begin{prop}
The family obtained in Proposition~\ref{prop:localmodel} in each of the charts containing a vertex of $Q$ is isomorphic to a one parameter $\Q$-Gorenstein smoothing.
\end{prop}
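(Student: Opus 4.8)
The plan is to reduce the statement to the explicit local model of \S\ref{sec:local_models} and to recognise it directly as the family appearing in Construction~\ref{cons:loc-qgor}. Fix a boundary vertex $v$ of $Q$; it corresponds to an edge $e$ of $P$, and by Proposition~\ref{prop:ring_iso} the chart of the $k$th order degeneration around $v$ has co-ordinate ring $R^k_\Pi\cong R^k_\cup$, whose $\C[\alpha]$-lift produced in Proposition~\ref{prop:localmodel} is $\widetilde R^k_\cup=\C[\alpha][X_i,Y_j,W]/I_\cup$. Thus it suffices to exhibit an isomorphism of $\C[\alpha]$-schemes between $\Spec\widetilde R^k_\cup\to\Spec\C[\alpha]$ and the formal family described in the last bullet of Theorem~\ref{thm:families}: the formal neighbourhood of the central fibre of a simultaneous maximal degeneration of a one-parameter $\Q$-Gorenstein smoothing of the germ $\tfrac1n(1,q)$ at $v$.

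The first step is to rerun the proof of Proposition~\ref{prop:orbifold-model} with the base ring $\C$ replaced throughout by $\C[\alpha]$. That argument is entirely $\C$-linear: the $S_k$-module bases of $R^k_\cup$ and of $R'{}^k_\cup$ are read off from the sublattice descriptions of Construction~\ref{cons:monoids} and are $\alpha$-independent, the toric relations are $\alpha$-independent, and the only $\alpha$-dependence enters through the slab function $f_\tau$ and the ray functions $f_\d$ supported on $\tau$; the $f_\d$ are invertible and are absorbed by the same automorphism $xy\mapsto xy\prod f_\d$ used there, and by Lemma~\ref{lem:family_scattering} all of these lie in $\C[\alpha][z]$. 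This gives
\[
\widetilde R^k_\cup\;\cong\;\C[\alpha][t,x,y,z]^{\mu_r}\big/\big(xy-t^{l}z^{w_0}f_\tau(z^r),\;t^{k+1}\big),
\]
where $\mu_r$ fixes $\alpha,t$ and acts with weights $(1,q,a)$, and $f_\tau\in\C[\alpha][z]$ has degree $m$, the singularity content of $(C_v,L_v)$, equivalently of $\tfrac1n(1,q)$. Compatibility of the chosen one-parameter family of log structures with $\mathcal B_Q$, together with Lemma~\ref{lem:family_scattering}, forces $f_\tau|_{\alpha=0}$ to be the pure power $z^{m}$.

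The second step reads off the $\Q$-Gorenstein structure by comparison with Construction~\ref{cons:loc-qgor}. The coefficients of $f_\tau(\alpha)$ define a morphism $\Spec\C[\alpha]\to\C^{m+1}$ into the space of degree-$\le m$ polynomials; pulling back the universal family $\{xy=z^{w_0}f_m(z^r)\}/\mu_r$ of Construction~\ref{cons:loc-qgor} along this morphism produces a one-parameter $\Q$-Gorenstein deformation of $\tfrac1n(1,q)$, which is a genuine smoothing when $f_\tau$ has degree $m$ with distinct roots. Adjoining the degeneration parameter $t$ amounts to the substitution replacing $z^{w_0}f_m(z^r)$ by $t^{l}z^{w_0}f_m(z^r)$, i.e.\ to performing the Mumford degeneration in every fibre simultaneously; at $\alpha=0$ the central fibre of the resulting family is $\{xy=t^{l}z^{w}\}/\mu_r$, since $w_0+rm=w$, in agreement with Proposition~\ref{prop:central}, and restricting to $t^{k+1}=0$ recovers exactly the displayed ring above. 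Finally, that presentation exhibits the total space as the $\mu_r$-quotient of the Gorenstein hypersurface $\{xy-t^{l}z^{w_0}f_\tau(z^r)=0\}$ commuting with the group action, which is precisely the defining property of a $\Q$-Gorenstein family, completing the proof and hence that of Theorem~\ref{thm:families}.

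I expect the main obstacle to be purely a matter of confirming that Propositions~\ref{prop:ring_iso} and~\ref{prop:orbifold-model} survive the passage from coefficients in $\C$ to coefficients in $\C[\alpha]$ — which they do, since at no point did those arguments use that the coefficients were constants, only that the ray functions remain invertible after localisation — and that the degree of $f_\tau$ is indeed the singularity content $m$ and that the cyclic group $\mu_r$ and weights $(1,q,a)$ coming out of the gluing/scattering construction of \S\ref{sec:local_models} agree with those of Construction~\ref{cons:monoids}. These matchings are exactly the content of those two propositions; everything else is bookkeeping with monoids, dual lattices and orders, and the identification $f_\tau|_{\alpha=0}=z^{m}$ needed for the central-fibre statement has already been established in Proposition~\ref{prop:central}.
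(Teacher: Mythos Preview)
Your proposal is correct and follows exactly the same strategy as the paper: invoke Proposition~\ref{prop:orbifold-model} to rewrite the local chart via the canonical cover as a $\mu_r$-quotient of a hypersurface $\{xy=t^lz^{w_0}f_\tau(z^r)\}$, and then recognise this as an instance of Construction~\ref{cons:loc-qgor}. The paper's own proof is a two-sentence remark that this is immediate from Proposition~\ref{prop:orbifold-model}; you have simply unpacked the bookkeeping (base change to $\C[\alpha]$, the role of Lemma~\ref{lem:family_scattering}, and the central-fibre check via Proposition~\ref{prop:central}) more explicitly than the paper bothers to.
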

\begin{proof}
This is immediate from Proposition~\ref{prop:orbifold-model}, as we may rewrite the families using the canonical cover. Indeed, by Proposition~\ref{prop:orbifold-model} deforming the log-structure simply deforms the equation in this cover, so in particular $R'{}^k_\cup$ is defined for any fiber, not just away from the special fiber.
\end{proof}

We remark that for each $k$,~$f = f_\tau\prod_{\d}{f_\d}$ is a polynomial in $\alpha$, but as $k \rightarrow \infty$ the degree of this polynomial will, in general, tend to infinity.  However there are local co-ordinates near boundary vertices with respect to which the family $\mathcal{X}_Q$ is algebraic to all orders.

\section{Ilten families}
\label{sec:ilten}

We have studied Fano polygons $P$ and smoothings of the associated toric varieties $X_P$.  From the perspective of mirror symmetry \cite{Sigma, Fano} Fano polygons have a different interpretation -- as Newton polygons of a Laurent polynomial $W$ referred to as the mirror \emph{superpotential}.  Indeed, information pertaining to the enumerative geometry of a smoothing of $X_P$ is encoded in the periods of $W$.  However, there are potentially infinitely many Laurent polynomials (with different Newton polygons) that encode this enumerative information. These Laurent polynomials are related by certain birational transformations, referred to as \emph{mutations} \cite{Sigma}, or \emph{symplectomorphisms of cluster type}~\cite{oldandnew}. Mutation of $W$ defines an operation on the Newton polygon $P$ of $W$ and, by duality, an operation on $Q = P^\vee$. This dual action is the restriction of a piecewise linear transformation on the lattice $M$, where $Q \subset M_{\R}$.  This piecewise linear transformation is precisely the transition function between the two charts defining the affine manifold obtained by exchanging a corner of $Q$ for an interior singular point, as described in Section~\ref{sec:affine_manifolds}.  One may then consider a family of affine manifolds in which the singularity is introduced, traverses its monodromy invariant line, and creates a corner in the opposing edge.  This is made precise in the following way:

\begin{prop}
Given a mutation between polygons $Q, Q' \subset M_\R$ there is family of affine manifolds $\pi \colon \mathcal{B} \rightarrow [0,1]$ for which:
\begin{enumerate}
\item $Q = \pi^{-1}(0)$, $Q' = \pi^{-1}(1)$.
\item The generic fiber contains a single type-1 singularity.
\end{enumerate}
\end{prop}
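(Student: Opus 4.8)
The plan is to realise $\mathcal{B}$ as a single ``moving worm'': one type-1 singularity that is born at the vertex $v$ of $Q$ distinguished by the mutation, slides along the monodromy-invariant line $L = L_v$ through the interior of $Q$, and is finally absorbed into a new lattice corner when it reaches the opposite edge, the resulting polygon being $Q'$.

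\emph{Normalisation.} First I would put the mutation in standard form. After a $\GL_2(\Z)$ change of coordinates in $M$ and using the combinatorial description of mutation, we may assume it is dual to the mutation of an edge $e$ of the Newton polygon $P$; this singles out a vertex $v$ of $Q$, its tangent wedge $C = C_v$, and the ray $L = L_v$ of the spanning fan of $Q$ through $v$. By Lemma~\ref{lem:local-polygons} the pair $(C,L)$ satisfies $v_1 + v_2 \in L$, and the very existence of the mutation guarantees that the singularity content of $(C,L)$ is at least $1$, so the conditions preceding Definition~\ref{def:local_model} hold with $k = 1$ and the local model $\mathcal{B}_{C,L,1}$ is available.

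\emph{Construction.} The ray $L$ leaves $Q$ through a point $p^\star$ in the relative interior of an edge $e'$; let $\mathcal{U}$ be a tubular neighbourhood of the segment $L \cap Q = [v,p^\star]$. A mild extension of Definition~\ref{def:local_model} --- replacing the cone $C$ by $\mathcal{U}$ and letting the worm travel the full length of $[v,p^\star]$ rather than only within $\mathrm{Star}(v)$ --- equips $\mathcal{U} \times [0,1]$ with an affine structure whose fibre over $s \in (0,1)$ carries a single type-1 singularity, at the point of $L$ at (suitably normalised) parameter $s$; on the complement $(Q \setminus \mathcal{U}) \times [0,1]$ I take the trivial family, and glue the two pieces along $\partial\mathcal{U} \times [0,1]$. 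Over $s = 0$ the worm has merged with $v$ and the shear it carries is absorbed into the corner of $Q$ at $v$, so that fibre is $Q$ with its standard affine structure; this is exactly what makes the gluing to the trivial part consistent at $s = 0$. The symmetric statement at $s = 1$ is the content of the discussion preceding the proposition: the transition function of $S_1$ is the piecewise linear mutation map, so when the worm reaches $p^\star \in e'$ its shear is absorbed into a new lattice corner on $e'$ and the resulting polygon is $Q'$; reparametrising the worm's position to $[0,1]$ this also forces consistency of the gluing at $s = 1$.

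\emph{Verification and main obstacle.} It then remains to check that $\pi \colon \mathcal{B} \to [0,1]$ is a family of affine manifolds in the evident sense --- near every fibre it is locally modelled on some $\mathcal{B}_{C'',L'',k}$ with $k \le 1$ (trivially so away from $L$; after an affine change of coordinates, by the extended local model near $L$, with $k = 1$ only near $s = 0,1$) --- and that the two asserted properties hold: $\pi^{-1}(0) = Q$, $\pi^{-1}(1) \cong Q'$, and every fibre over $(0,1)$ carries exactly one type-1 singularity, all by construction. \textbf{The main obstacle} is the bookkeeping in the gluing step: one must verify that the monodromy accumulated by the moving worm along $\partial\mathcal{U}$, on each side of $L$, matches the trivial transition data on $Q \setminus \mathcal{U}$ at the two endpoints --- equivalently, that a type-1 singularity reaching a boundary edge produces a lattice corner and ceases to be a singularity, with the new polygon being precisely $Q'$. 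This is exactly the identification of the combinatorial mutation with the $S_1$ transition function; granted it, the normalisation, the affine reparametrisations and the count of singular points are routine.
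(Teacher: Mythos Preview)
Your approach is correct, but the paper's proof is considerably more economical and avoids the gluing bookkeeping you flag as the main obstacle. Rather than working in a tubular neighbourhood of $L$ and patching to a trivial family on the complement, the paper simply takes $\mathcal{B} = Q \times [0,1]$ as a topological space and covers it with \emph{two global charts}: setting $l \colon [0,1] \to Q$ to be a parametrisation of the segment $L \cap Q$, one defines
\[
U_1 = \mathcal{B} \setminus \{(l(t),u) : t \le u,\ u \ne 0\},\qquad
U_2 = \mathcal{B} \setminus \{(l(t),u) : t > u,\ u \ne 1\},
\]
and takes the transition function on $U_1 \cap U_2$ to be (the product with the identity on $[0,1]$ of) the piecewise-linear mutation map itself. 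The fibre over $u$ then automatically carries a single type-1 singularity at $l(u)$ for $u \in (0,1)$; at $u=0$ the removed set is empty in $U_1$, so the fibre is $Q$ in its standard chart; at $u=1$ the removed set is empty in $U_2$, so the fibre is $Q$ transformed by the mutation, i.e.\ $Q'$.

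The virtue of your construction is that it makes explicit the link with the local model $\mathcal{B}_{C,L,1}$ of \S\ref{sec:local_model} and with the ``one-parameter degeneration'' definition: you are literally exhibiting the family as two copies of that local model (one at each endpoint of $l$) glued to a trivial middle. The paper's two-chart argument buys brevity: there is no tubular neighbourhood, no extension of Definition~\ref{def:local_model}, and no compatibility check along $\partial\mathcal{U}$ --- the single transition function \emph{is} the mutation, so the identifications $\pi^{-1}(0)=Q$ and $\pi^{-1}(1)=Q'$ are immediate rather than requiring the monodromy-matching you describe.
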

This will be referred to as the \emph{tropical Ilten family}.
\begin{proof}
Take $\pi\colon \mathcal{B} \rightarrow [0,1]$ to be the trivial family with fiber $Q$.  Construct a line segment $l$ contained in the interior of $Q$ as follows;  The mutation is defined as a piecewise linear transformation on $Q$ and $Q'$, there is a distinguished line dividing $M$ into two chambers; intersecting this line with $Q$ defines $l$.  We shall refer to the two chambers contained in $Q$ as $Q_1, Q_2$ and $Q'_1, Q'_2$ in $Q'$.  Take a parameterization of $l$, writing now $l\colon [0,1] \rightarrow Q$.

We define the affine structure on the total space by covering it with two charts:
\begin{enumerate}
\item Let $\mathcal{B}$ be the topological space $Q \times [0,1]$.
\item Take $U_1 \subset \mathcal{B}$ to be
\[
U_1 = \mathcal{B} \backslash \{(l(t),u) : \text{$u,t \in [0,1]$,~$t \leq u $ and $ u \neq 0$} \}
\]
\item Similarly take $U_2 \subset \mathcal{B}$ to be 
\[
U_2 = \mathcal{B} \backslash \{(l(t),u) : \text{$u,t \in [0,1], t > u$ and $u \neq 1$} \}
\]
\item Take the transition function such that the fiber $\pi^{-1}(1)$ becomes $Q'$ in the chart $U_2$ and in every $\pi^{-1}(x), x \in (0,1)$ exhibits a simple singularity in its interior.
\end{enumerate}
Note that these two sets are not open, but the affine structure extends over the two corners.
\end{proof}
Observe that this family provides us both with an affine manifold $B$ -- a general fiber of $\pi$ -- and a polyhedral decomposition $\mathscr{P}$ of $B$, which subdivides $B$ along $l$.  We also require a family of log-structures compatible with the family of affine manifolds.  The line segment $l$ determines a one-dimensional projective toric stack $\P(a,b)$, with the log-structure a section of $\mathcal{O}(\lcm(a,b))$.  The line segment $l$ is the only interior 1-cell so there is no consistency condition to check.  Sections of the bundle $\mathcal{O}(\lcm(a,b))$ are parameterized, up to scale, by $\P^1$ and we pick a family of sections such that the image of the zero set follows the singular locus of the affine structure.  After choosing a piecewise linear $\phi$ on $B$ we can apply the Gross--Siebert algorithm.

Applying the Gross--Siebert algorithm fiberwise, as in Theorem~\ref{thm:families}, and using the local models~\ref{def:icup} to understand the central fiber as in  Proposition~\ref{prop:central}, we obtain families $\pi_i \colon \mathcal{X}_i \rightarrow \Spec \C[\alpha,t]$ for $i = 1,2$.  We now describe these families; as there is no scattering these families are in fact \emph{polynomial} in $t$.  Relating the $\pi_i$ to \cite{Ilten, Overarching}, denote the Ilten family for $Q$,~$Q'$ as $\pi' \colon \mathcal{Y} \rightarrow \P^1$. The construction in \cite{Ilten, Overarching} also defines a family over the affine cone $\C^2$ of $\P^1$, we shall recover this family by gluing together the families $\pi_i$ and contracting the resulting exceptional curve.

\begin{prop}\label{prop:preilten}
There is a family $\pi\colon \mathcal{X} \rightarrow \text{Bl}_0(\C^2)$ from which we obtain each $\pi_i$ as follows.
\begin{enumerate}
\item Cover the base with the standard toric charts $U_1, U_2$.
\item Restricting $\pi|_{U_i}$ to a formal neighbourhood of the exceptional divisor recovers $\pi_i$.
\item The family over the exceptional divisor is trivial, and after restricting to the strict transform of a line in $\C^2$ the family becomes a toric degeneration endowing the restriction of $\pi$ to the exceptional divisor with a family of divisorial log-structures. 
\end{enumerate}
\end{prop}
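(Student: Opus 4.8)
The plan is to build $\mathcal{X}$ by gluing the two families $\pi_1\colon\mathcal{X}_1\to\Spec\C[\alpha_1,t_1]$ and $\pi_2\colon\mathcal{X}_2\to\Spec\C[\alpha_2,t_2]$ (we index the two copies of $\Spec\C[\alpha,t]$) directly over $\text{Bl}_0(\C^2)$. Take for $\text{Bl}_0(\C^2)$ its two standard affine charts $U_i=\Spec\C[\alpha_i,t_i]$, glued over $U_1\cap U_2=\{\alpha_1\neq0\}=\{\alpha_2\neq0\}$ by the blow-up transition $\alpha_2=\alpha_1^{-1}$, $t_2=\alpha_1 t_1$; then the exceptional curve is $E=\{t_1=0\}=\{t_2=0\}\cong\P^1$, the blow-down $\text{Bl}_0(\C^2)\to\C^2=\Spec\C[u,v]$ is given by $u=\alpha_1t_1=t_2$, $v=t_1=\alpha_2t_2$, and the two coordinate axes $\{u=0\}$, $\{v=0\}$ of $\C^2$ have strict transforms $\{\alpha_1=0\}\subset U_1$ and $\{\alpha_2=0\}\subset U_2$. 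With the charts chosen this way property (1) holds by definition, and it remains to (a) produce a canonical isomorphism $\mathcal{X}_1|_{U_1\cap U_2}\to\mathcal{X}_2|_{U_1\cap U_2}$ covering the transition rule, so that the glued family $\mathcal{X}\to\text{Bl}_0(\C^2)$ is defined, and then (b) read off (2) and (3).

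For (a): over $\{\alpha_i\neq0\}$ the family $\mathcal{X}_i$ is the Gross--Siebert smoothing attached to the single affine manifold $B$ appearing in the tropical Ilten family, equipped for $i=1$ with the decomposition presenting $B$ as the subdivided polygon $Q$, and for $i=2$ with the one presenting it as $Q'$. Passing from the first presentation to the second changes $\phi$ by an integral affine-linear function, hence changes the Mumford polyhedron only by an integral shear; the colimit defining the Gross--Siebert family is unaffected, so the two restrictions are identified as schemes and the only thing to determine is the induced change of base parameters. There are exactly two zero-cells to inspect, the boundary vertices $v,v'$ at the ends of the unique interior $1$-cell $\tau$ (the image of the segment $l$), and at each Proposition~\ref{prop:orbifold-model} supplies the explicit local model $S_k[x,y,z]^{\mu_r}/(xy=t^{l}z^{w_0}f_\tau(z^r))$, while Definition~\ref{def:pre_log} supplies the change-of-vertex relation $f_{\tau,v'}=z^{k\,m^\rho_{v,v'}}f_{\tau,v}$ between the two trivialisations of the log line bundle $\mathcal{N}_\tau$ (here $k=1$, as $\tau$ carries a single type-$1$ singularity). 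Running this monomial rescaling of $f_\tau$, together with the rescaling of $\phi$ by the affine-linear function, through the local model expresses $(\alpha_2,t_2)$ in terms of $(\alpha_1,t_1)$, and a direct check identifies this with the blow-up transition $\alpha_2=\alpha_1^{-1}$, $t_2=\alpha_1 t_1$. Away from $v,v'$ — along $\tau$, on the open $2$-cells, and at the remaining (unsmoothed) corners of $Q$, where the family is trivial — the comparison is the purely toric change-of-chart computation already carried out in the codimension $0$ and $1$ cases of Section~\ref{sec:gluing}; one checks these local isomorphisms agree on overlaps and so patch to the required global isomorphism.

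Given the gluing, property (2) is immediate: $U_i$ is literally $\Spec\C[\alpha_i,t_i]$, $E\cap U_i=\{t_i=0\}$, and $\pi|_{U_i}$ completed along $E$ is $\pi_i$. For (3), restrict $\mathcal{X}$ to $E$: in the chart $U_i$ this is $\mathcal{X}_i|_{t_i=0}$, which by Theorem~\ref{thm:families} (the ``$t=0$'' bullet) equals $X_0(B,\mathscr{P},s)\times\Spec\C[\alpha_i]$; at $t_i=0$ every local piece of $\mathcal{X}_i$ and every gluing map is independent of $\alpha_i$ (as in the proof of Proposition~\ref{prop:central}, since the functions $f_\tau,f_\d$ enter multiplied by a positive power of $t$, or not at all), so the gluing over $E\cap U_1\cap U_2$ is the canonical identification of the two descriptions of $X_0(B,\mathscr{P},s)$, and $\mathcal{X}|_E\cong X_0(B,\mathscr{P},s)\times E$ is the trivial family over $E\cong\P^1$. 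Finally, the strict transform of the axis $\{u=0\}$ is $\{\alpha_1=0\}$, and $\mathcal{X}|_{\{\alpha_1=0\}}=\mathcal{X}_1|_{\alpha_1=0}$ is, again by Theorem~\ref{thm:families} (the ``$\alpha=0$'' bullet), the Mumford degeneration of $(Q,\mathscr{P})$ — a toric degeneration meeting $E$ in the stratum $X_0(B,\mathscr{P},s)$. The canonical divisorial log structure on the central fibre of a toric degeneration is a log structure on $\mathcal{X}|_{\{\alpha_1=0\}\cap E}$, and transporting it across $E$ by the triviality of $\mathcal{X}|_E$ yields the asserted family of divisorial log structures on $\mathcal{X}|_E$ (the strict transform of $\{v=0\}$, giving the Mumford degeneration of $(Q',\mathscr{P})$, produces the same log structure).

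The main obstacle is step (a): verifying that the change of base parameters forced by swapping the two presentations of $B$ — tracked through the explicit local models of Section~\ref{sec:local_models} and the change-of-vertex formula — is exactly the standard blow-up transition, and that the local isomorphisms near $v$, $v'$, along $\tau$, on the $2$-cells and at the unsmoothed corners are mutually compatible and so glue. Everything else is either tautological from the construction or a direct appeal to Theorem~\ref{thm:families} and Propositions~\ref{prop:central} and~\ref{prop:orbifold-model}.
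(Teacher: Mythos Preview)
Your approach is correct but genuinely different from the paper's. The paper does \emph{not} glue $\pi_1$ and $\pi_2$ by comparing transition maps; instead it constructs $\mathcal{X}$ globally and explicitly. It embeds $X_Q$ in a weighted projective space $\P(\vec{a})$ via generators of the cone $C(Q)$, writes the toric ideal $I_Q$ and its Mumford deformation $I_Q(t)$, introduces a distinguished degree-zero monomial $\mathcal{W}$ in the monodromy-invariant direction, and then defines $I_Q(t,\alpha)$ by replacing each binomial $M_1=t^dM_2$ with $M_1=t^d(1+\alpha\mathcal{W})^dM_2$ (exactly mirroring Definition~\ref{def:icup}). Homogenising by a second weight-one variable $\beta$ and giving $t$ weight $-1$ produces weight-zero equations $M_1=t^d(\beta+\alpha\mathcal{W})^dM_2$ in $\P(\vec{a})\times\text{Bl}_0(\C^2)$, and these cut out $\mathcal{X}$ directly. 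The paper explicitly remarks that your route---``the usual local model and inverse limit construction''---is legitimate, but opts for the projective construction because it tracks fewer rings and yields the family in closed form; this is what then makes the contraction to $\C^2$ (Theorem~\ref{thm:Ilten_in_introduction}) and the comparison with \cite{Ilten,Overarching} immediate.

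Your argument works: the ``direct check'' in step~(a) is exactly the observation that, in the running $\P^2$ example, $t_1(u+\alpha_1 s_0^2)=t_2(s_0^2+\alpha_2 u)$ forces $\alpha_2=\alpha_1^{-1}$, $t_2=\alpha_1 t_1$, and the general case follows the same pattern via the change-of-vertex formula for $f_\tau$. The trade-off is that you never obtain the explicit global equations, so the subsequent contraction to $\C^2$ and Lemma~\ref{lem:dehomog} would require separate work; the paper's projective approach buys those for free.
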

\begin{remark}
It would be entirely legitimate at this point to embark on a description of this smoothing via the usual local model and inverse limit construction.  For example these must contain the local model:
\[
R_{\tau \tau \mathfrak{u}} \cong S_k[x,y,w^{\pm}]/(xy - (\alpha + w)t)
\]
Indeed, all the families discussed in this section are compactifications of this affine local model.  There are no non-trivial scattering diagrams around any joint of the structure so the family is obtained by taking a colimit over a finite system of algebras.  However, we shall take a different approach, following \cite{Invitation}, which projectivises this construction.  This will greatly reduce the number of rings we need to keep track of and also produce an embedded family with the log structure encoded in the equations defining this family.  We shall prove the equivalence with the original construction in Lemma~\ref{lem:dehomog}.
\end{remark}

Recall that the polygon $P^{\vee} = Q \subset M_\R$ defines a toric variety via $X_{P} = \Proj(\C[C(Q)])$ where $C(Q)$ is the semigroup defined by the integral points of the cone in $M_\R \oplus \R$ with height one slice equal to $Q$.  As the vertices of $Q$ are rational this graded ring need not be generated in degree one.

The prototypical example we shall refer to is the pair of polygons $Q, Q'$ for $\P^2$ and $\P\left(1,1,4\right)$ respectively, they are shown below with the embedding from $\mathcal{O}\left(i\right), i=1,2$ as shown below.

\begin{center}\label{fig:polygons}
\includegraphics*[viewport=149 451 378 668]{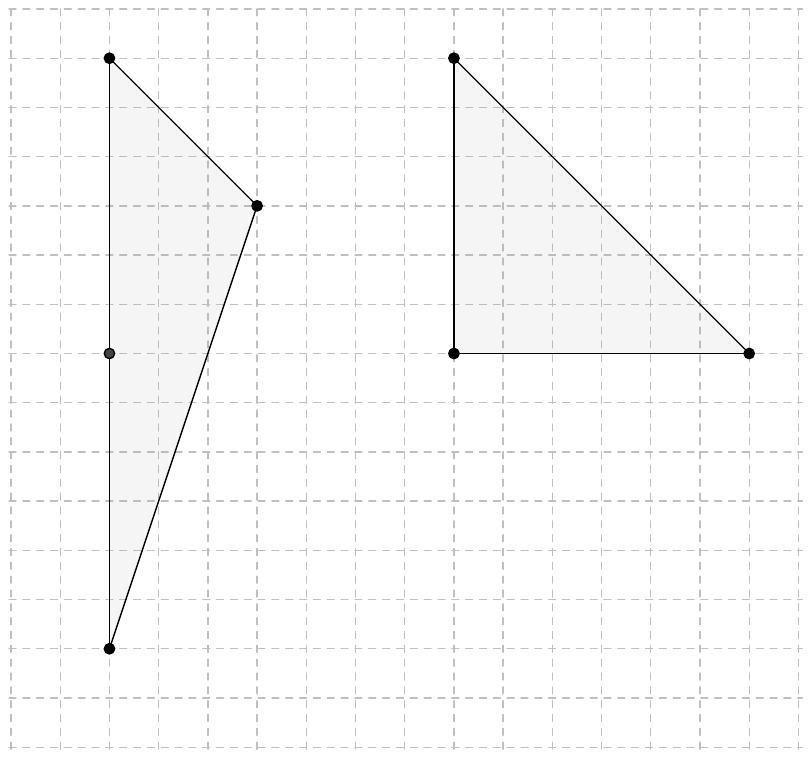}
\end{center}

Take a generating set for $C(Q)$ and refer to a general element of the generating set as $u_i$.  The generating set naturally subdivides into three disjoint sets:
\begin{enumerate}
\item Any generators lying in the cone over $Q_1$ and outside $Q_2$ are denoted $X_i$.
\item Any generators lying in the cone over $Q_2$ and outside $Q_1$ are denoted $Y_j$.
\item Any generators lying over both $Q_1$ and $Q_2$ are denoted $W_k$.  We observe that $(\mathbf{0},1) \in C(Q)$ is always in the generating set.
\end{enumerate}
Indeed we write $C(Q_1), C(Q_2), C(Q_1 \cap Q_2)$ for the three sub-cones respectively.  We shall insist that the union $\{X_i\}\cup\{W_k\}$ generates $C(Q_1)$, $\{Y_j\}\cup\{W_k\}$ generates $C(Q_2)$ and $\{W_k\}$ generate $C(Q_1 \cap Q_2)$. We denote the height of a generator $u_i$ as $\kappa(u_i)$.

\begin{remark}
In the example above we can take a generating set with four elements, which we shall call $\left\{s_0,s_1,s_2,u\right\}$ with heights $1,1,1,2$ respectively.  Thus we see $\P^2$ embedded as $s_1s_2 = u$ and $\P\left(1,1,4\right)$ embedded as $s_1s_2 = s_0^2$ in $\P(1,1,1,2)$.
\end{remark}

Recalling that the affine manifold is equipped with a piecewise-linear function $\phi$, we assume this has slope zero on $Q_2$ and slope $k$ on $Q_1$, i.e.~$\phi(X_i)$ is $k\left\langle n_0,\tilde{m}_i \right\rangle$ where $n_0$ is the primitive vector in $N$ annihilating the tangent space to $l$, and $\tilde{m}_i$ is the rational point of $Q$ defined by the exponent $m_i$ of $X_i$.  We shall assume $k$ is chosen such that $\phi$ is integral on each generator. We can now write out the $\Proj$ of this algebra explicitly: we can construct an ambient weighted projective space $\P(\vec{a})$, where $\vec{a} \in \Z^{N}_{>0}$ and $N$ is the size of the generating set, given by $\vec{a} = \sum_i \kappa(u_i) e_i$, the vector of heights.

The toric variety is then cut out in this space by the binomial equations given by the relations between these generators. We call the ideal generated $I_Q$.  The toric degeneration corresponding to $\mathscr{P}$ is given by the following ideal, denoted $I_P(t)$:

\begin{definition}
For each binomial relation $M_1 - M_2 \in I_P$ such that $d = \ord_l(M_1) - \ord_l(M_2) \geq 0$ define a new binomial relation $M_1 - t^dM_2$. Take $I_P(t)$ to be the ideal generated by these new relations.
\end{definition}

\begin{remark}
If $F \in I_P$ is an element of $\C[\{X_i\}\cup\{W_k\}]$, then $\ord_l(M_1) - \ord_l(M_2) = 0$ and the binomial relation remains unchanged in $I_P(t)$.  The same is true of those relations in $\C\left[\left\{Y_j\right\}\cup\left\{W_k\right\}\right]$
\end{remark}

Note this has recovered the Mumford degeneration for the pair $\left(Q,\mathscr{P}\right)$.  We have thus completed the first step, this family will be the family over the strict transform of a line through the origin in $\C^2$.

\begin{remark}
One can apply exactly the same procedure to $Q'$ and obtain a toric degeneration of the second toric variety, the family over the fiber at $\infty$.  In fact one may take exactly the same generating set, and get a different set of binomial relations.  As in Section~\ref{sec:smoothing} we now describe a family `interpolating' between them.
\end{remark}

To construct such a family first consider that in the construction in Section~\ref{sec:smoothing} we used a variable that corresponded to a primitive vector along the monodromy invariant direction.  In this construction we find such a variable by looking at the part of $C(Q)^{\text{gp}}$ generated by the exponents of the variables $W_k$.  This is a rank 2 free abelian subgroup of $C(Q)^{\text{gp}}$, that contains $(\mathbf{0},1)$.  There is another canonical monomial $\mathcal{W}$, determined up to sign by requiring it to lie at height zero and lie in the monodromy invariant direction.  In $\C\left[C(Q)^{\text{gp}}\right]$ this has the form $\mathcal{W} = \frac{\prod_k{W^{\alpha_k}_k}}{\prod_l{W^{\beta_l}_l}}$.  Note there may be many choices for the representation of $\mathcal{W}$ via the relations between the $W_k$.

\begin{remark}
In the example of $\P^2 \subset \P\left(1,1,1,2\right)$ we may take $\mathcal{W} = u/s_0^2$.
\end{remark}

The interpolating family is then given by replacing elements in $I_Q(t)$ analogously to the procedure in Section~\ref{sec:local_models}:
\begin{definition}
The ideal $I_Q(t,\alpha)$ is the ideal generated by relations defined in Definition~\ref{def:icup}, where we replace $C_i$ by $Q_i$ and $f$ by $(1+\alpha \mathcal{W})$.
\end{definition}

In the example we have been considering, for $\P^2 \subset \P\left(1,1,1,2\right)$, we replace the relation $s_1s_2 = u$ with $s_1s_2 = ut(1+\alpha s_0^2/u)$ i.e. with $s_1s_2 = t(u + \alpha s_0^2)$.  Observe that the fibers of this family are isomorphic to $\P^2$.  The other family, that deforming $\P\left(1,1,4\right)$, is given by $s_1s_2 = t(s_0^2 + \alpha u)$. This gives a smoothing of $\P\left(1,1,4\right)$ to $\P^2$.

To complete a proof of Proposition~\ref{prop:preilten} we glue this pair of families in the obvious fashion. Define $\mathcal{X} \rightarrow \text{Bl}_0(\C^2) =: E$ by taking $\mathcal{X} \hookrightarrow \P(\vec{a}) \times E$.  Giving $E$ homogenous co-ordinates, $\alpha, \beta$ of weight one and $t$ the weight $-1$ co-ordinate, elements of $I_{P}(t,\alpha)$ may be homogenized to obtain: $M_1 = t^d(\beta+\alpha \mathcal{W})^dM_2$ homogenous of weight zero.  These generate a homogeneous ideal, the equations of which define $\mathcal{X}$.

Given the family produced by Proposition~\ref{prop:preilten} we can establish a family over $\C^2$ by contracting the exceptional curve, so that $\alpha$ and $\beta$ become the coordinates on the plane and the new family is defined by equations $M_1 = (\beta+\alpha \mathcal{W})^dM_2$. Thus we have established Theorem~\ref{thm:Ilten_in_introduction}.

In the running example the homogeneous equation is:
\[
\{s_1s_2 = (\beta s_0^2 + \alpha u)\} \subset \P(1,1,1,2) \times \P^2_{(t:\alpha:\beta)}
\]
\begin{lem}\label{lem:dehomog}
Restricting to the ideal of $\C[\alpha]\llbracket t \rrbracket$ generated by $(\alpha - \alpha_0, t^{k+1})$ for fixed $\alpha_0 \neq 0$ denote the restriction of $\cX$ by $\mathcal{X}_{\alpha_0, k}$, this scheme is isomorphic to the scheme obtained in Sections~\ref{sec:scattering},~\ref{sec:gluing} from $(B,\mathscr{P})$ with log-structure fixed by the parameter $\alpha$.
\end{lem}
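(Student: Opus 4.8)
The plan is to compare the two constructions chart by chart, using the explicit affine local models already in hand. The scheme $\mathcal{X}_{\alpha_0,k}$ of Proposition~\ref{prop:preilten} is, by construction, a closed subscheme of $\P(\vec{a}) \times E$ cut out by the homogenizations of the relations in $I_Q(t,\alpha)$, restricted to $\alpha = \alpha_0 \neq 0$ and to $t^{k+1} = 0$. On the other side, Sections~\ref{sec:scattering} and~\ref{sec:gluing} produce the $k$th-order formal family as an inverse limit over the system $\tilde R^k_\omega$. Since there is only one interior $1$-cell and hence no nontrivial joint, the structure $\mathscr{S}$ has no rays and the only data carried is the slab function on $l$; in particular $f = f_\tau = 1 + \alpha\mathcal{W}$ (up to the monomial normalization), so the change-of-chamber map is exactly $z^m \mapsto (1+\alpha\mathcal{W})^{\langle n_0,\bar m\rangle} z^m$. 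First I would cover both schemes by the affine opens indexed by the zero-strata $\omega$ of $\mathscr{P}$: the two boundary vertices of $Q$ lying on $l$, the other boundary vertices, and the interior vertex. Away from $l$ the affine structure is unchanged and both constructions reduce to a chart of the Mumford degeneration of $(Q,\mathscr{P})$ tensored with $\C[\alpha]/(\alpha-\alpha_0)$, so there is nothing to check there.

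The substantive comparison is at the two vertices on $l$ and at the interior vertex. At a boundary vertex $\omega$ on $l$, the Gross--Siebert side gives the ring $R^k_\cup$ of Definition~\ref{def:icup} with $f = 1+\alpha\mathcal{W}$; by Proposition~\ref{prop:orbifold-model} this is isomorphic to $R'^k_\cup = S_k[x,y,z]^{\mu_r}/\big(xy - t^l z^{w_0} f(z^r)\big)$, i.e.\ to the canonical-cover model. The $\text{Bl}_0(\C^2)$ side, dehomogenized by setting $\beta = 1$ (the affine chart $U_1$ of $E$) and then $\alpha = \alpha_0$, $t^{k+1}=0$, gives precisely the affine relations $M_1 = t^d(1+\alpha\mathcal{W})^d M_2$ of Definition~\ref{def:icup} with $C_i$ replaced by $Q_i$ and $f$ by $1+\alpha\mathcal{W}$ — which is the same presentation of $R^k_\cup$. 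So the isomorphism is essentially a matter of matching generators: the homogeneous coordinates $X_i, Y_j, W_k$ on $\P(\vec a)$ restrict, in the affine chart where the height-one generator $(\mathbf 0,1)$ is inverted, to the monoid generators used in Section~\ref{sec:local_models}, and $\mathcal{W}$ is exactly the height-zero monodromy-invariant monomial called $\mathcal{W}$ there. At the interior vertex the scattering diagram is trivial for every $\alpha$ (this was the key point of Proposition~\ref{prop:central}), so chambers coincide with $2$-cells, all change-of-chamber maps are trivial, and the inverse limit reconstructs exactly the toric patch appearing in the homogeneous equations $M_1 = t^d(\beta + \alpha\mathcal{W})^d M_2$ dehomogenized there.

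The final step is to check that these chart-wise isomorphisms glue: on overlaps one must verify that the transition data of the inverse-limit construction (the change-of-strata maps $\psi$ and change-of-chamber maps $\theta_{\mathfrak u,\mathfrak u'}$) match the restriction maps of the projective scheme $\mathcal{X}_{\alpha_0,k}$ between its affine opens. This is where I expect the only real work to lie: one has to track that the factor $(1+\alpha\mathcal{W})^{\langle n_0,\bar m\rangle}$ inserted by $\theta_{\mathfrak u_2,\mathfrak u_1}$ is the same as the factor produced by passing between the two affine opens of $\P(\vec a)\times E$ across the locus where the relevant generator vanishes. Since the homogenization in Proposition~\ref{prop:preilten} was designed so that $(\beta + \alpha\mathcal{W})$ plays exactly the role of the slab function, this comes down to a bookkeeping check with the heights $\kappa(u_i)$ and the pairing $\langle n_0, -\rangle$, entirely parallel to the verification already carried out in Proposition~\ref{prop:ring_iso} that $\Phi$ is a ring homomorphism. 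Granting that, the glued isomorphism identifies $\mathcal{X}_{\alpha_0,k}$ with the $k$th-order scheme built from $(B,\mathscr{P})$ with log-structure parameter $\alpha_0$, which is the claim.
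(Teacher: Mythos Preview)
Your overall strategy --- cover both schemes by affine opens indexed by the zero-strata of $\mathscr{P}$, identify each with $R^k_\cup$ via the explicit presentation of Definition~\ref{def:icup}, and then check that the gluings agree --- is exactly what the paper does, though the paper phrases the chart-wise identification more tersely as ``localize the $\Proj$ at the variable $W_k$ corresponding to the vertex $\omega$, and recover $R^k_\cup$ by construction of $I_Q(t,\alpha)$.''

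There is, however, a genuine confusion in your middle paragraph. In the Ilten setup the decomposition $\mathscr{P}$ subdivides $B$ only along the line segment $l$; it is \emph{not} the spanning-fan decomposition of Section~\ref{sec:affine_manifolds}. All zero-cells of $\mathscr{P}$ lie on $\partial B$ --- they are the vertices of $Q$ together with the two endpoints of $l$ --- and there is \emph{no interior vertex}. This is precisely why there is no scattering: there are no interior joints at all, hence $\mathscr{S}^r = \varnothing$ and $\mathscr{P}_k = \mathscr{P}$ for every $k$. Your appeal to Proposition~\ref{prop:central} is misplaced: that proposition concerns the spanning-fan decomposition at $\alpha = 0$, not the Ilten decomposition at general $\alpha$, and in any case the scattering diagram at an interior vertex of the spanning fan is certainly \emph{not} trivial for $\alpha \neq 0$. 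Once you delete the ``interior vertex'' paragraph entirely, what remains is correct and is the paper's argument. A minor related point: the affine chart at a boundary vertex $\omega$ is obtained by inverting the generator of $C(Q)$ lying over $\omega$, not the height-one generator $(\mathbf{0},1)$.
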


\begin{proof}
Considering this $(B,\mathscr{P})$, there is no scattering, so we have $\mathscr{S}^r = \varnothing$, and the set of slabs $\mathscr{S}^s = \{l\}$.  The category $\underline{\text{Glue}}(\mathscr{S},k)$ consists of objects $(\omega, \tau, \mathfrak{u})$ where:
\begin{enumerate}
\item $\omega$ is an end-point of $l$, $\tau = l$ and $\mathfrak{u}$ is either of the two maximal cells of $\mathscr{P}$.
\item In any other case the chamber is fixed by the choice of $\omega, \tau$.  In particular $\tau$ is a boundary edge of $B$ and contained in precisely one two-cell of $\mathscr{P}$.
\end{enumerate}
Firstly $R^k_\omega$ is recovered by localizing $\mathcal{X}_{\alpha,k}$ with respect to the variable $W_k$ corresponding to the vertex $\omega$ in $C(Q)$.  This is immediate from the usual $\Proj$ construction and performing this localisation we recover $R^k_\cup$ for this vertex, by construction.  Indeed the same argument applies for any vertex of $Q$.  The final check is that the gluing of these rings according to Section~\ref{sec:gluing} coincides with that of $\Proj$.
\end{proof}

\begin{cor}\label{cor:localilten}
The family given by Theorem~\ref{thm:Ilten_in_introduction} is $\Q$-Gorenstein.
\end{cor}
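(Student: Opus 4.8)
The plan is to reduce the claim to a local statement at the non-Gorenstein points of the fibres and then read off the conclusion from the canonical-cover description obtained in Section~\ref{sec:local_models}. The $\Q$-Gorenstein property of a deformation is local on the total space, and the fibres of the family $\mathcal{X}\to\C^2$ of Theorem~\ref{thm:Ilten_in_introduction} are the toric surfaces $X_P$, $X_{P'}$ and their partial smoothings; so it is enough to verify the condition in a neighbourhood of each point of $\mathcal{X}$ lying over a vertex of $Q$ or of $Q'$, since at every other point of the total space the fibre is either smooth or has a toric Gorenstein singularity along which the relative dualizing sheaf is already invertible.

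First I would fix such a vertex $\omega$ and pass to the affine chart of the ambient $\P(\vec a)\times\C^2$ obtained by inverting the homogeneous coordinate attached to $\omega$; this is precisely the localisation used in the proof of Lemma~\ref{lem:dehomog}. Because the Ilten setting involves no scattering, the defining equations $M_1=(\beta+\alpha\mathcal{W})^dM_2$ are honest polynomials, so this chart is a genuine affine family over $\C[\alpha,\beta]$ and no passage to a formal completion is required. Lemma~\ref{lem:dehomog} identifies this chart, at each order, with the ring $R^k_\cup$ attached to $\omega$, and the $\C[\alpha]$-family version of the isomorphism $R^k_\cup\cong R'{}^k_\cup$ of Proposition~\ref{prop:orbifold-model} (as used in Proposition~\ref{prop:localmodel}) rewrites it as a chart of the family of Construction~\ref{cons:loc-qgor}: a $\mu_r$-quotient of the hypersurface family $xy=z^{w_0}f_\tau(z^r)$ with the canonical-cover weights $(1,q,a)$, where $f_\tau$ is the section of the log-structure along $l$ specialised at $(\alpha,\beta)$. (Since no rays of the structure are supported on $l$, the extra factors $\prod f_\d$ occurring in the general local model are trivial here.)

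Construction~\ref{cons:loc-qgor} exhibits exactly such a family as a $\Q$-Gorenstein deformation of the cyclic quotient singularity $\frac{1}{n}(1,q)$: it is induced from a $\mu_r$-equivariant deformation of the index-one cover $\{xy=z^w\}$, which is a hypersurface, hence Gorenstein, so $\omega_{\mathcal{X}/\C^2}$ is $\Q$-Cartier in a neighbourhood of $\omega$. Running this at every vertex of $Q$ — and, using the same generating set of $C(Q)$, at every vertex of $Q'$ — and gluing then yields the corollary. The step requiring the most care is the passage from the order-by-order identification of Section~\ref{sec:local_models} to an honest isomorphism of the $\C^2$-family with the Construction~\ref{cons:loc-qgor} model: one must observe that in the absence of scattering the rings $R^k_\cup$ stabilise, so their inverse limit is the affine scheme literally cut out by the polynomials $M_1-(\beta+\alpha\mathcal{W})^dM_2$ — which is essentially the content of Lemma~\ref{lem:dehomog} — after which the $\Q$-Gorenstein conclusion is immediate.
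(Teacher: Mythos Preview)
Your proposal is correct and follows essentially the same approach as the paper: cover the family by the affine charts at boundary vertices, invoke Lemma~\ref{lem:dehomog} to identify each chart with the local model $R^k_\cup$ of Section~\ref{sec:local_models}, and conclude $\Q$-Gorenstein from the canonical-cover description (Proposition~\ref{prop:orbifold-model}/Construction~\ref{cons:loc-qgor}). The paper's proof is a three-line sketch of exactly this; you have simply spelled out the intermediate steps (locality of the $\Q$-Gorenstein condition, absence of scattering forcing polynomial dependence, checking vertices of both $Q$ and $Q'$) that the paper leaves implicit.
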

\begin{proof}
We can cover the family by neighbourhoods around each boundary vertex.  By Lemma~\ref{lem:dehomog} each of these is equal to the local model described in Section~\ref{sec:local_models} and is therefore $\Q$-Gorenstein.
\end{proof}
We remark the analogous families in both \cite{Ilten} and \cite{Overarching} are independently known to be $\Q$-Gorenstein, making this an expected outcome.

\section{Examples}

\subsection{Rigid del~Pezzo surfaces}

Given a Fano polygon $Q\subset N_{\R}$ there may be no way of exchanging any of its corners with singularites in the interior of the affine manifold at all.  In the language of \cite{SingCon} this is the statement that all the singularities of the corresponding toric variety $X_Q$ are \emph{residual singularities}, and so  $X_Q$ is $\Q$-Gorenstein rigid (see \cite{Overarching}).  The standard example of this phenomenon is $\P(3,5,11)$, though it may be thought of as `generic' behaviour. 

\subsection{A single smoothing direction}
\label{sec:A1-type}
Consider the hypersurface:
\[
X_6 \subset \P(1,3,3,1)
\]
This exhibits a toric degeneration in this ambient space to a toric variety with fan:
\begin{center}
\includegraphics*[viewport=156 563 344 635]{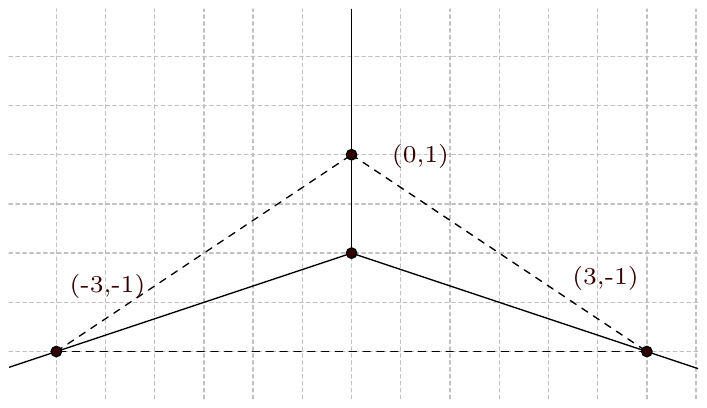}
\end{center}
The fan exhibits 2 residual singularities which persist after the smoothing and an $A_5$ singularity, $\frac{1}{6}(1,5)$ which is a $T$-singularity.  Constructing the dual polygon one observes that the one-parameter family of affine manifolds obtained by smoothing all possible corners has a general fiber $B$ with all six singularities ranged along a single edge. Therefore there is no scattering diagram to construct so one can construct a family (the multi-parameter analogue of the family appearing in Section~\ref{sec:ilten}) for which all the mutation equivalent toric varieties are special fibers.

To write down the family constructed in Section~\ref{sec:ilten} for this polygon we consider the dual polygon $Q^\vee$:
\begin{center}
\includegraphics*[viewport=174 540 320 655]{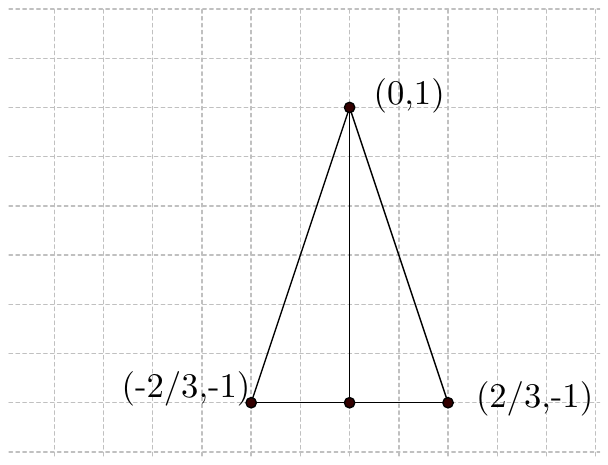}
\end{center}
Now form the monoid of integral points of the cone for which $Q^\vee$ is the height one slice.  However, note that the polygon is that obtained from the polarisation $\mathcal{O}(2)$; using the more economical polarisation $\mathcal{O}(1)$ (embedding $Q^\vee$ at height 2) the associated relation is a binomial in $\P(1,1,3,3)$.  Indeed the vertices of the polygon at height one are now $(0,1), (0,0), (-1/3,0),(1/3,0)$ after a translation, naming the corresponding variables $X_0,X_1,Y,Z$ respectively gives: $YZ = X^6_1$.  Applying the method of Section~\ref{sec:ilten}, we find the Ilten family:
\[
\{ YZ  = (\alpha X^6_1 + \beta X^5_1X_0) \}
\]
Of course we can consider a general homogenous degree six polynomial in $X_0,X_1$ and so find a family over $\P^5$ which has $6$ toric zero strata, each element of which corresponds to a particular toric variety.  There is redundancy in this description, since for example $YZ = X^6_0$ manifestly gives the same variety as $YZ = X^6_1$.

\subsection{The cubic surface}

In this example we place Example 4.4 of \cite{Invitation} in this context.  The toric cubic surface $\{X_0X_1X_2 = X_3^3\} \subset \P^3$ exhibits $3\times A_2$ singularities which may all be smoothed.  However this situation is much more chaotic than the previous examples -- the mutation graph is necessarily infinite and we cannot expect to capture all degenerations in a single algebraic family.  However following \cite{Invitation} we may ask an easier question; rather than smoothing the corners completely we can simply introduce three type 1 singularities.  This should produce a family of cubic surfaces which all exhibit at least ordinary double points.  In \cite{Invitation} this scattering diagram is explicitly computed, in particular it is shown to be finite, producing a toric degeneration embedded in $\P^3$.

Having produced the scattering diagram one can construct a toric degeneration as explained above.  The equation from \cite{Invitation} is:
\[
\{ XYZ = t((1+t)U^3 + (X+Y+Z)U^2 )\} \subset \P^3\times \C_t
\]
To recover the family partially smoothing these $A_2$ singularities we simply repeat the derivation of this, but place general coefficents in the sections defining the log-structure.  We know from Section~\ref{sec:local_models} that this will give the correct family as these sections degenerate.

This calculation gives a family over $\C^3_{\alpha,\beta,\gamma}$:
\[
\{XYZ = t((1+\alpha\beta\gamma t)U^3 + (\alpha X + \beta Y + \gamma Z)U^2)\}
\]
For completeness we also compute an Ilten family for the cubic surface:\newline
Subdividing using the $x$-axis, we have zero strata:
\[
(1,0),(0,1),(-1,-1),(0,0),(-1/2,0)
\]
Naming the corresponding variables $X,Y,Z,U,W$ respectively we obtain the toric degeneration:
\[
\{XYZ = tU^3, YZ=tW\} \subset \P(1,1,1,1,2)
\]
Performing the construction of Section~\ref{sec:ilten} we obtain the family:
\[
\{ XYZ = tU^2(\alpha U + \beta X), YZ=t(\alpha W + \beta U^2)\} \subset \P(1,1,1,1,2)
\]





\subsection{Polygons of finite mutation type}

We say that a Fano polygon $Q$ has \emph{finite mutation class} if it is mutation equivalent to only finitely many polygons.  In this case, the scattering diagram considered above is \emph{finite}, and so the Gross--Siebert reconstruction algorithm terminates after finitely many steps.  Thus, in this case, we can construct the family $\mathcal{X}_Q$ explictly. The families $\mathcal{X}_{Q'}$, where $Q'$ is mutation-equivalent to $Q$, patch together to form a single family $\mathcal{X}$ that contains, as special fibers, all toric degenerations of its generic fiber.

In \cite{Prince} we shall classify Fano polygons with finite mutation class. In \cite{Minimality} notions of quivers and cluster algebras associated to polygons were introduced. Using the classification of cluster algebras of finite type and finite mutation type, we shall classify in \cite{Prince} those Fano polygons which admit finitely many polygons in their mutation equivalence class.  These may be divided into classes of type $A_1^k$, for $k \in \Z_{\geq 0}$,~$A_2$,~$A_3$ and $D_4$. The $A_1^k$ case equates to the examples covered in section~\ref{sec:A1-type}, but for any type the scattering diagram one obtains at the origin is finite, and so the output of the Gross--Siebert algorithm may be explicity computed in precisely these cases.

\section{Conclusion}

An intuitive picture begins to emerge: If we fix a del~Pezzo surface $X$ which is a smoothing of a toric variety $X_P$ we have various \emph{mutation equivalent} toric varieties, namely those associated to the polygons obtained by mutating $P$.  Rather than directly analysing the deformation theory of these varieties we studied the moduli space of log-structures after taking a toric degeneration of $X$.  This produced a `tropical analogue' of the deformation theory, in which one mimics the $\Q$-Gorenstein deformations of $X_P$ by introducing singularities into the affine manifold $P$.  As well as recovering the entire theory of combinatorial mutations we have shown how to recover, order by order, an algebraic family with general fiber $X$ via the Gross--Siebert algorithm.

Moving singularities defines a `moduli problem' of its own, a topological orbifold (due to automorphisms of the polygons) which carries an affine structure, first mentioned in \cite{KoSo}. There is also a stratification of this space: The zero strata being the polygons themselves, one strata the tropical Ilten families and so on.  To relate this space to the study of $\Q$-Gorenstein degenerations one must understand how to lift these families to algebraic ones.  From this perspective we have described this lift for the 1-skeleton of this space in this article.

Finally, we recall that the techniques used in this article are motivated by results in mirror symmetry.  As mentioned in Remark~\ref{rem:Auroux} the geometric interpretation of the scattering process is that it records \emph{instanton corrections}, which in this context Maslov index zero holomorphic discs.  In fact in the cases where the scattering diagram is finite one may hope to gain a completely geometric understanding of the situation.  For example in \cite{Aur1} the case of a single singularity treated: the affine base of $(\C^2,C)$ for a conic $C$ is computed from a torus fibration and the Maslov index zero discs are computed.  Compactifying this model in different ways would recover the Ilten families once again.

Whilst we have attempted no mirror symmetry calculations in this article, the shape of such results is already visible from \cite{Aur1} and \cite{TropicalLG}.  In particular taking the \emph{Legendre dual} one would recover the various Laurent polynomials from counts of \emph{broken lines}.  Taking the affine manifold obtained as a general fiber of the tropical Ilten family, the dual base manifold also has a single wall and a suitable broken line count shows that crossing this wall induces precisely the desired mutation of the Laurent polynomial. More concisely: `\emph{The Ilten family is mirror to the mutation}'.  Smoothing more corners one must consider affine manifolds of the form considered in \cite{TropicalLG}; here the scattering process is more complicated but one may expect to see a wall and chamber decomposition with the Laurent polynomials lying on each chamber related by mutations.  We shall return to this in a future work.

\bibliographystyle{plain}
\bibliography{smoothing_surfaces}

\end{document}